\newcommand*{\rom}[1]{\expandafter\@slowromancap\romannumeral #1@}
\theoremstyle{definition}
\newtheorem{fact}{fact}
\newtheorem{thm}[fact]{Theorem}
\newtheorem{lemma}[fact]{Lemma}
\newtheorem{prop}[fact]{Proposition}
\newtheorem{corollary}[fact]{Corollary}
\newtheorem{defini}[fact]{Definition}
\title{Randomness and Degree Theory for Infinite Time Register Machines}
\author{Merlin Carl}
\date{}
\begin{document}

\maketitle


\begin{abstract}
A concept of randomness for infinite time register machines (ITRMs) is defined and studied. In particular, we show that for this notion of randomness,
 computability from mutually random reals implies computability and that an analogue of van Lambalgen's theorem holds. This is then applied to obtain results on the structure of ITRM-degrees.
Finally, we consider autoreducibility for ITRMs and show that randomness implies non-autoreducibility. This is an expanded and amended version of \cite{CiE}.
\end{abstract}

\section{Introduction}

Martin-L\"of-randomness (ML-randomness, see e.g. \cite{DoHi}) provides an intuitive and conceptually stable clarification of the informal notion of a random sequence over a finite alphabet.
The guiding idea of ML-randomness is that a sequence of $0$ and $1$ is random if and only if it has no special properties, where a special property should be a small (e.g. measure $0$) set of reals
that is in some way accessible to a Turing machine.

Since its introduction, several variants of this general approach to defining randomness have been considered; a recent example is the work of Hjorth and Nies on $\Pi_{1}^{1}$-randomness and a $\Pi_{1}^{1}$ version of ML-randomness, 
which led to interesting connections with descriptive set theory (\cite{HN}).

We are interested in obtaining a similar notion based on machine models of transfinite computations.  In this paper, we will exemplarily consider infinite time register machines.
Infinite Time Register Machines (ITRMs), introduced in \cite{ITRM} and further studied in \cite{ITRM2}, work similar to the classical unlimited register machines (URMs) described in \cite{Cu}. 
In particular, they use finitely many registers each of which can store a single natural number. The difference is that ITRMs use transfinite ordinal running time: The state of an ITRM
at a successor ordinal is obtained as for URMs. At limit times, the program line is the inferior limit of the earlier program lines and there is a similar limit rule for the register contents. 
If the inferior limit of the earlier register contents is infinite, the register is reset to $0$.

Classical Turing machines, due to the finiteness of their running time, have the handicap that the only decidable null set of reals is the empty set: If a real $x$ is accepted by a classical Turing machine $M$
within $n$ steps, then $M$ will also accept every $y$ agreeing with $x$ on the first $n$ bits. In the definition of ML-randomness,
this difficulty is overcome by merely demanding the set $X$ in question to be effectively approximated by a recursively enumerable sequence of sets of intervals with controlled convergence behaviour.
For models of transfinite computations, this trick is unnecessary:
The decidable sets of reals form a rich class (including all ML-tests and, by \cite{ITRM}, all $\Pi_{1}^{1}$-sets). This is still a plausible notion of randomness, since elements of an ITRM-decidable meager set
can still be reasonably said to have a special property. In fact, some quite natural properties like coding a well-ordering can be treated very conveniently with our approach.
Hence, we define:

\begin{defini}{\label{deci}}
 $X\subseteq\mathfrak{P}(\omega)$ is called ITRM-decidable if and only if there is an ITRM-program $P$ such that $P^{x}\downarrow=1$ if and only if $x\in X$ and $P^{x}\downarrow=0$, otherwise. In this case we say that $P$ decides $X$.
$P$ is called deciding if and only if there is some $X$ such that $P$ decides $X$. We say that $X$ is decided by $P$ in the oracle $y$ if and only if $X=\{x\mid P^{x\oplus y}\downarrow=1\}$ and $\mathfrak{P}(\omega)-X=\{x\mid P^{x\oplus y}\downarrow=0\}$.
In this case, we also say that $P^{y}$ decides $X$.
The other notions relativize in the obvious way.
\end{defini}

\begin{defini}{\label{randomdef}} Recall that a set $X\subseteq\mathfrak{P}(\omega)$ is meager if and only if it is a countable union of nowhere dense sets.
$X\subseteq\mathfrak{P}(\omega)$ is an ITRM-test if and only if $X$ is ITRM-decidable and meager. $x\subseteq\omega$ is ITRM-c-random (where $c$ stands for `category') if and only if there is no ITRM-test $X$ such that $x\in X$. 
$x\subseteq\omega$ is ITRM-random if and only if there is no ITRM-decidable set $X\ni x$ of Lebesgue measure $0$. If there is no such set decidable in the oracle $y\subseteq\omega$,
then $x$ is called ITRM-c-random/ITRM-random relative to $y$.
\end{defini}

\noindent
\textbf{Remark}: Expect for the section on autoreducibility, we will mostly be concerned with ITRM-c-randomness. 
This obviously deviates from other definitions of randomness in that we use meager sets rather than null sets as our underlying notion of `small'. The reason is simply
that this variant turned out to be much more convenient to handle for technical reasons. The use of category rather than measure gives this definition a closer resemblance 
to what is, in the classical setting, refered to as genericity (see e.g. section 2.24 of \cite{DoHi}).
We still decided to use the term `ITRM-c-randomness' to avoid confusion with the frequently used concept of Cohen genericity, hence reserving the term `ITRM-random'
for reals that do not lie in any ITRM-decidable null set. 
We are pursuing the notion of ITRM-randomness in ongoing work.
In contrast to strong $\Pi_{1}^{1}$-randomness (\cite{HN}, \cite{Sa}), it will be shown below that there is no universal ITRM-test.

We will now summarize some key notions and results on ITRMs that will be used in the paper.

\begin{defini}
For $P$ a program, $x,y\in\mathfrak{P}(\omega)$, 
$P^{x}\downarrow=y$ means that the program $P$, when run with oracle $x$, halts on every input $i\in\omega$ and outputs $1$ if and only if $i\in y$ and $0$, otherwise.
$x\subseteq\omega$ is ITRM-computable in the oracle $y\subseteq\omega$ if and only if there is an ITRM-program $P$ such that $P^{y}\downarrow=x$, in which case we occasionally write $x\leq_{\text{ITRM}}y$. If $y$ can be taken to be
$\emptyset$, $x$ is ITRM-computable. We denote the set of ITRM-computable reals by COMP.
\end{defini}

\noindent
\textbf{Remark}: We occasionally drop the ITRM-prefix as notions like `computable' always refer to ITRMs in this paper.

\begin{thm}{\label{relITRM}}
 Let $x,y\subseteq\omega$. Then $x$ is ITRM-computable in the oracle $y$ if and only if $x\in L_{\omega_{\omega}^{\text{CK},y}}[y]$, where $\omega_{i}^{\text{CK},y}$ denotes the $i$th $y$-admissible ordinal.
\end{thm}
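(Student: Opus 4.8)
The statement is the oracle-relativization of the known characterization of plain ITRM-computability, so the plan is to prove both implications and, wherever possible, to obtain them by relativizing the unrelativized arguments, replacing $L$ by $L[y]$ and the Church--Kleene admissibles by the $y$-admissibles $\omega_i^{\text{CK},y}$. As a preliminary common to both directions, I would record that running a program $P$ with oracle $y$ on a fixed input is an absolute, $\Sigma_1$-definable recursion over $L[y]$: a configuration is a code for the active program line together with the finitely many register contents, the successor step is primitive recursive, and the liminf-with-reset rule at limit stages is uniformly $\Sigma_1$.

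For ``computable $\Rightarrow$ constructible'', the crux is a \emph{uniform} bounding lemma. Since a fixed $P$ uses a fixed number $n$ of registers, I would show there is a single $y$-admissible $\omega_k^{\text{CK},y}$, with $k$ depending only on $n$, such that every halting run of $P^{y}$, on any input, halts below $\omega_k^{\text{CK},y}$. This is the ITRM repetition argument: inside the admissible set $L_{\omega_k^{\text{CK},y}}[y]$, if the run has not halted then analysing the liminf behaviour at the admissible stage — using $\Sigma_1$-reflection to control which configurations are cofinal and what the register liminfs are — produces a configuration that recurs cofinally, and the limit rule then forces a permanent loop, so the run never halts. Granting the uniform bound, each bit $x(i)$ is read off a halting run of length $<\omega_k^{\text{CK},y}$, the map $i\mapsto(x(i),\text{run})$ is total and $\Sigma_1$ over the admissible set $L_{\omega_k^{\text{CK},y}}[y]$, and $\Sigma_1$-collection delivers $x\in L_{\omega_k^{\text{CK},y}}[y]\subseteq L_{\omega_\omega^{\text{CK},y}}[y]$. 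It is essential here that the bound is uniform across inputs, because $\omega_\omega^{\text{CK},y}$, being the first limit of $y$-admissibles, is itself not $y$-admissible and so carries no collection of its own.

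For the converse, fix $x\in L_{\omega_\omega^{\text{CK},y}}[y]=\bigcup_n L_{\omega_n^{\text{CK},y}}[y]$, so $x\in L_{\omega_k^{\text{CK},y}}[y]$ for some finite $k$. The heart of this direction is a climbing lemma: for each $k$ there is an ITRM-program that, with oracle $y$, outputs a real coding $L_{\omega_k^{\text{CK},y}}[y]$. The main tool is that ITRMs can recognize well-founded relations — the $\Pi_{1}^{1}$ predicate of well-foundedness is ITRM-decidable by \cite{ITRM} — which lets the machine verify, for a guessed code, that it codes a genuine well-founded model of $V=L[y]$ of the correct height. The recursion builds codes for successive levels $L_\beta[y]$, certifying each by a well-foundedness/soundness check and using the same halting-versus-looping calibration to detect when the $k$-th $y$-admissible has been reached; from a code for a level containing $x$ the machine extracts $x$ by a decoding query, which is first-order and absolute in the code.

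The main obstacle, in both directions, is exactly this calibration of ITRM halting behaviour against the $y$-admissibles: the forward uniform bounding lemma and the backward climbing lemma are the two nontrivial poles, and each rests on the interplay between the liminf/reset dynamics at limit stages and the $\Sigma_1$-reflection available at $y$-admissible ordinals, with the register count controlling which admissible is reached and the supremum over all programs yielding $\omega_\omega^{\text{CK},y}$. Everything else — absoluteness of the step function, the decoding, and the collection argument — is routine once the two lemmas are in hand.
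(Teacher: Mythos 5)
Your proposal is correct and is essentially the paper's approach made explicit: the paper's entire proof is the one-line citation ``straightforward relativization of Theorem $5$ of \cite{Ko2}'', and the two poles you isolate --- the uniform halting bound below $\omega_{n+1}^{\text{CK},y}$ for $n$-register programs (the paper's Theorem \ref{hp}) and the climbing lemma producing codes for the levels $L_{\omega_{k}^{\text{CK},y}}[y]$ via the ITRM-decidability of well-foundedness --- are precisely the ingredients of Koepke's argument, relativized exactly as you describe. The only step you leave slightly vague is how a \emph{finite} program singles out $x$ inside a code for $L_{\omega_{k}^{\text{CK},y}}[y]$; the standard fix is that the machine computes a canonical (e.g.\ $<_{L[y]}$-least) code deterministically, so the program need only hard-code the pair of natural numbers $(k,m)$ where $m$ indexes $x$ in that code.
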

\begin{proof}
This is a straightforward relativization of Theorem $5$ of \cite{Ko2}, due to P. Koepke.
\end{proof}

\begin{thm}{\label{hp}}
Let $\mathbb{P}_{n}$ denote the set of ITRM-programs using at most $n$ registers, and let $(P_{i,n}|i\in\omega)$ enumerate $\mathbb{P}_{n}$ in some natural way. 
Then the bounded halting problem $H_{n}^{x}:=\{i\in\omega|P_{i,n}^{x}\downarrow\}$ is computable uniformly in the oracle $x$ by an ITRM-program (using more than $n$ registers).

Furthermore, if $P\in\mathbb{P}_{n}$ and $P^{x}\downarrow$, then $P^{x}$ halts in less than $\omega_{n+1}^{\text{CK},x}$ many steps. 
Consequently, if $P$ is a halting ITRM-program, then $P^{x}$ stops in less than $\omega_{\omega}^{\text{CK},x}$ many steps.
\end{thm}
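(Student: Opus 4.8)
The plan is to establish the two assertions in the reverse of the stated order: first prove the halting-time bound, and then use it to turn the obviously semi-decidable halting predicate into an actual decision procedure, since knowing in advance where a halting computation must stop is exactly what lets one also recognise non-halting.

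For the bound, the natural framework is Theorem \ref{relITRM}: the step-by-step computation of $P^{x}$ is uniformly $\Delta_{1}$-definable over the levels $L_{\alpha}[x]$, and the configuration at a limit stage $\lambda$ (program line, register contents, and reset behaviour) is obtained from the earlier configurations by the liminf rule, which is itself a $\Sigma$-operation over $L_{\lambda}[x]$. I would argue by induction on the register number $n$ that a halting $n$-register computation halts before $\omega_{n+1}^{\text{CK},x}$. The heart of the matter is a loop lemma: if $P\in\mathbb{P}_{n}$ has not halted by stage $\alpha:=\omega_{n+1}^{\text{CK},x}$, then it has entered a cycle and never halts. To see this I would use admissibility of $\alpha$, so that the whole computation below $\alpha$ is $\Sigma$-definable over $L_{\alpha}[x]$; combined with the fact that there are only finitely many program lines and finitely many registers, a boundedness/reflection argument extracts two limit stages $\beta_{1}<\beta_{2}<\alpha$ carrying the same configuration and with matching liminf data on $[\beta_{1},\beta_{2})$, forcing the interval $[\beta_{1},\beta_{2})$ to repeat forever. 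The role of $n$ is that each register lets the computation survive across one further admissible level before the liminf rule forces stabilisation; this is the point at which the inductive hypothesis for $n-1$ registers, bounding halting below $\omega_{n}^{\text{CK},x}$, feeds in.

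Next I would build the program computing $H_{n}^{x}$. On input $i$ it simulates $P_{i,n}^{x}$ step by step. The bound just proved means that a non-halting computation must display the recurring-configuration pattern of the loop lemma before stage $\omega_{n+1}^{\text{CK},x}$, so the simulating program, which is permitted strictly more than $n$ registers, uses the extra registers to store a reference configuration at limit stages and to test, at each later limit stage, whether the current configuration together with the intervening register minima reproduces the loop condition. If the simulated machine halts it outputs $1$; if the loop condition is detected it outputs $0$. Uniformity in $x$ and $i$ is automatic, since a single program carries out the simulation and the comparison with $i$ treated as input. The final clause is then immediate: every halting program lies in some $\mathbb{P}_{n}$, hence halts before $\omega_{n+1}^{\text{CK},x}<\omega_{\omega}^{\text{CK},x}$.

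The main obstacle is the limit bookkeeping in the loop lemma. Because register contents at limit stages are governed by liminf with reset to $0$ on divergence, the mere recurrence of a configuration at two limit stages does not by itself guarantee periodicity: one must certify that on the interval between the two stages every register's liminf is genuinely realised and agrees with the stored value, and that no register secretly diverges. Making this certification simultaneously correct, so that it really forces an eternal loop, and detectable by a finite-register machine is the technical crux, and it is precisely where the admissibility of $\omega_{n+1}^{\text{CK},x}$ and the induction on the number of registers have to be combined with care.
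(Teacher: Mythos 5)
The paper does not actually prove this theorem: its ``proof'' is the single remark that the corresponding unrelativized results (Theorem 4 of the Koepke--Miller paper \cite{ITRM} and Theorem 9 of \cite{Ko2}) easily relativize. What you have written is therefore a blind reconstruction of the argument that the paper delegates to the literature, and in outline it is the right one: the strong-loop criterion (two limit stages with identical configurations such that the configuration equals the liminf of the intervening configurations forces eternal repetition), the bound $\omega_{n+1}^{\text{CK},x}$ obtained by tying each additional register to one more admissible level, and a halting-problem solver that simulates with extra registers and watches for the loop pattern. Your closing paragraph also correctly locates the crux. Two remarks on where the published argument differs from your organization. First, in Koepke--Miller the bound and the decidability are not proved in sequence but by a simultaneous induction on $n$: the halting-problem solver for $\mathbb{P}_{n}$ is used as a subroutine in the analysis and simulation of $(n+1)$-register computations, and this is exactly the device that resolves the difficulty you flag --- a naive ``store one reference configuration and compare'' scheme fails because one does not know in advance which limit stage begins the loop, whereas the inductive solver lets the simulating machine certify the liminf data on the relevant intervals. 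Second, the detection of the intervening register minima is not extra bookkeeping the simulator must invent: since the simulating machine's own registers obey the same liminf-with-reset rule, the minima over an interval ending at a limit stage are available for free in its registers at that stage. With those two points supplied, your sketch matches the cited proof; as it stands it is an accurate outline with the hardest step identified but not discharged.
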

\begin{proof}
 The corresponding results from \cite{ITRM} (Theorem $4$) and \cite{Ko2} (Theorem $9$) easily relativize. 
\end{proof}

\begin{defini}{\label{jump}}
 For $x\subseteq\omega$, $x^{\prime}_{\text{ITRM}}$ denotes the set of $i\in\omega$ such that $P_{i}^{x}\downarrow$. $x^{\prime}_{\text{ITRM}}$ is called the ITRM-jump of $x$.
We furthermore define the first $\omega$ many iterations of the ITRM-jump of $x$ by $x^{(0)}=x$, $x^{(i+1)}_{\text{ITRM}}=(x^{(i)}_{\text{ITRM}})^{\prime}$.
\end{defini}

We will freely use the following standard propositions:

\begin{prop}
 Let $X\subseteq[0,1]\times[0,1]$ and $\tilde{X}:=\{x\oplus y\mid (x,y)\in X\}$. Then $X$ is meager/comeager/non-meager if and only if $\tilde{X}$ is meager/comeager/non-meager.
\end{prop}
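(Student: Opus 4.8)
The plan is to exhibit the join map as a homeomorphism and then invoke the fact that being meager, comeager, or non-meager is a topological invariant. Concretely, identify $[0,1]$ with Cantor space $2^{\omega}\cong\mathfrak{P}(\omega)$ and consider the map $\phi\colon 2^{\omega}\times 2^{\omega}\to 2^{\omega}$ given by $\phi(x,y)=x\oplus y$, where the even coordinates of $x\oplus y$ record the bits of $x$ and the odd coordinates record the bits of $y$. By construction $\phi(X)=\tilde{X}$, so the whole proposition reduces to showing that $\phi$ is a homeomorphism: homeomorphisms send meager sets to meager sets, comeager to comeager, and non-meager to non-meager, each of these notions being defined purely in terms of the topology, via nowhere dense sets and countable unions.

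First I would check that $\phi$ is a bijection by writing down its inverse explicitly: given $z\in 2^{\omega}$, set $x=\{n\mid 2n\in z\}$ and $y=\{n\mid 2n+1\in z\}$; then $\phi(x,y)=z$, and this recovery is unique. Next I would verify continuity of both $\phi$ and $\phi^{-1}$ on the standard clopen basis. A basic clopen subset of $2^{\omega}$ is determined by fixing finitely many coordinates of $z$; its $\phi$-preimage fixes the corresponding finitely many even coordinates (constraining $x$) and odd coordinates (constraining $y$), hence is a finite intersection of basic clopen sets of the product, and so is clopen. The reverse direction is symmetric: a basic clopen set of $2^{\omega}\times 2^{\omega}$ fixes finitely many bits of $x$ and of $y$, whose image under $\phi$ fixes the corresponding even and odd coordinates of $z$ and is therefore clopen. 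Thus $\phi$ is a homeomorphism.

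Finally, since $\phi$ is a homeomorphism and meagerness is preserved in both directions, $X$ is meager if and only if $\tilde{X}=\phi(X)$ is meager; passing to complements gives the comeager equivalence, and negating gives the non-meager equivalence.

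There is no serious obstacle here; the only points requiring a little care are the identification of $[0,1]$ with $2^{\omega}$ and the confirmation that the relevant topology on the product is the product topology, so that $\phi$ is genuinely continuous in both directions. If one insists on the literal interval $[0,1]$ with its order topology rather than on Cantor space, the binary-expansion map differs from a homeomorphism only on the countable, hence meager, set of dyadic rationals, which affects none of the three category notions.
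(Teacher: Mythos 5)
Your argument is correct, and in fact the paper offers no proof at all here: the proposition is listed among ``standard propositions'' to be used freely, so the homeomorphism argument you give is exactly the canonical justification one would supply. One small point worth flagging: the paper's displayed definition of $a\oplus b$ as $\{p(i,j)\mid i\in a\wedge j\in b\}$ is evidently a slip (that map is neither injective nor surjective), and the interleaving join you use --- even bits from $x$, odd bits from $y$ --- is the intended and standard reading, under which your bijectivity and clopen-basis computations go through without issue.
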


\begin{prop}
 If $X\subseteq[0,1]$ is meager, then so are $X\oplus[0,1]$ and $[0,1]\oplus X$.
\end{prop}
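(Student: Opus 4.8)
The plan is to reduce the statement to the preceding proposition, which identifies the category of $\tilde{X}=\{x\oplus y\mid (x,y)\in X\}$ with that of $X\subseteq[0,1]\times[0,1]$ through the join homeomorphism $(x,y)\mapsto x\oplus y$. First I would observe that, in that notation, $X\oplus[0,1]=\{x\oplus y\mid x\in X,\ y\in[0,1]\}=\widetilde{X\times[0,1]}$, and likewise $[0,1]\oplus X=\widetilde{[0,1]\times X}$. Hence, by the preceding proposition, it suffices to show that $X\times[0,1]$ and $[0,1]\times X$ are meager in the product $[0,1]\times[0,1]$ whenever $X\subseteq[0,1]$ is meager. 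This moves all the work into the transparent product space and lets me avoid manipulating $\oplus$ directly.

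For this, write $X=\bigcup_{n\in\omega}N_{n}$ as a countable union of nowhere dense sets, so that $X\times[0,1]=\bigcup_{n\in\omega}(N_{n}\times[0,1])$. It then remains to check that each $N_{n}\times[0,1]$ is nowhere dense in the product. Since the second factor is the whole space, the closure of $N_{n}\times[0,1]$ equals $\overline{N_{n}}\times[0,1]$, using $\overline{A\times B}=\overline{A}\times\overline{B}$; and this closed set has empty interior, because any nonempty basic open box $U\times V$ contained in it would force $U\subseteq\overline{N_{n}}$, contradicting that $\overline{N_{n}}$ has empty interior. Thus $X\times[0,1]$ is a countable union of nowhere dense sets, hence meager, and the symmetric argument handles $[0,1]\times X$; applying the preceding proposition then yields the claim for both $X\oplus[0,1]$ and $[0,1]\oplus X$.

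The argument is essentially routine, and I do not expect a genuine obstacle. The only point requiring a little care is the verification that a product $N\times[0,1]$ with $N$ nowhere dense is again nowhere dense, which hinges on the closure identity for products together with the observation that a nonempty open box cannot sit inside a set whose first coordinate is confined to a nowhere-dense closed set. Everything else is bookkeeping, and the real leverage comes from the preceding proposition, which converts the join operation into an ordinary Cartesian product.
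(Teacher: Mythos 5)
Your proof is correct: the identification $X\oplus[0,1]=\widetilde{X\times[0,1]}$ reduces everything to the preceding proposition, and your verification that $N\times[0,1]$ is nowhere dense whenever $N$ is (via $\overline{N\times[0,1]}=\overline{N}\times[0,1]$ and the open-box argument) is sound. The paper states this proposition without proof as a standard fact, and your argument is exactly the routine verification one would supply.
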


Most of our notation is standard. By a real, we mean an element of $^{\omega}2$.
$L_{\alpha}[x]$ denotes the $\alpha$th level of G\"odel's constructible hierarchy relativized to $x$. For $a,b\subseteq\omega$, $a\oplus b$ denotes $\{p(i,j)\mid i\in a\wedge j\in b\}$, where
$p:\omega\times\omega\rightarrow\omega$ is Cantor's pairing function. During the paper, we will frequently code countable $\in$-structures by reals. The idea here is the following: 
Let $X$ be transitive and countable, and let $f:\omega\leftrightarrow X$ be a bijection. Then $(X,\in)$ is coded by the real $r_{X,f}:=\{p(i,j)|i\in\omega\wedge j\in\omega\wedge f(i)\in f(j)\}\subseteq\omega$, and
it is easy to re-obtain $(X,\in)$ from $r_{X,f}$. In general, a real $x$ is a code for the transitive $\in$-structure $(X,\in)$ if and only if there is a bijection $f:\omega\rightarrow X$ such that
$x=r_{X,f}$. If $(X,\in)$ is constructible and $X$ is transitive and countable in $L$, then $L$ contains a $<_{L}$-minimal code for $(X,\in)$; this code will be called the canonical code for $(X,\in)$ and
denoted by $\text{cc}(X)$.

\section{Computability from random oracles}

In this section, we consider the question which reals can be computed by an ITRM with an ITRM-c-random oracle. We start by recalling
 the following theorem from \cite{CaSc}. The intuition behind it is that, given a certain non-ITRM-computable real $x$, one has no chance of computing $x$ from
some randomly chosen real $y$.

\begin{thm}{\label{ManyOracles}}
Let $x$ be a real, $Y$ be a set of reals such that $x$ is ITRM-computable from every $y\in Y$.
Then, if $Y$ has positive Lebesgue measure or is Borel and non-meager, $x$ is ITRM-computable.
\end{thm}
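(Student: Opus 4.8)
The plan is to avoid having the machine itself estimate measures or categories, and instead to combine the oracle-dependent characterisation of Theorem~\ref{relITRM} with a forcing argument. First I would exploit that there are only countably many programs: writing $A_{e}=\{y\mid P_{e}^{y}\downarrow=x\}$, every $y\in Y$ lies in some $A_{e}$, so $Y\subseteq\bigcup_{e}A_{e}$. For a fixed program $P_{e}$ (say using at most $N$ registers) and fixed $n,b$, the section $\{y\mid P_{e}^{y}(n)\downarrow=b\}$ is decidable in $y$ by Theorem~\ref{hp}, and in fact $\Pi^{1}_{1}$ (divergence of a fixed ITRM is witnessed by an eventually repeating configuration, hence $\Sigma^{1}_{1}$); therefore $A_{e}$, being a countable intersection of such sections, is $\Pi^{1}_{1}$ and so is Lebesgue measurable and has the Baire property. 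Consequently, if $Y$ has positive outer measure then some $A_{e}$ has positive measure, and if $Y$ is Borel and non-meager then some $A_{e}$ is non-meager, hence comeager in some cylinder $[s]$.

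Fix such an $e$, let $N$ bound its register usage, and set $\beta=\omega_{N+1}^{\text{CK}}$, which is admissible. The key step is to produce two reals $y_{0},y_{1}\in A_{e}$ that are mutually Cohen-generic (in the measure case: mutually random) over the countable model $L_{\beta}$. Such pairs are comeager (resp. conull) in the product, while by Kuratowski--Ulam (resp. Fubini) the pairs with both coordinates in $A_{e}$ are comeager in $[s]^{2}$ (resp. of positive measure); intersecting, a suitable pair exists, and since $A_{e}$ is $\Pi^{1}_{1}$ its relevant largeness is already recognised inside $L_{\beta}$, so the generic/random pair genuinely lands in $A_{e}$.

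Now $y_{0},y_{1}\in A_{e}$ gives $P_{e}^{y_{i}}\downarrow=x$ with $P_{e}$ using at most $N$ registers, so by Theorem~\ref{hp} the computation halts before $\omega_{N+1}^{\text{CK},y_{i}}$ and $x\in L_{\omega_{N+1}^{\text{CK},y_{i}}}[y_{i}]$. Because Cohen and random forcing preserve admissibility, genericity over $L_{\beta}$ yields $\omega_{k}^{\text{CK},y_{i}}=\omega_{k}^{\text{CK}}$ for $k\le N+1$, so in fact $x\in L_{\beta}[y_{i}]$ for $i=0,1$. Finally, mutual genericity (resp. mutual randomness) gives $L_{\beta}[y_{0}]\cap L_{\beta}[y_{1}]=L_{\beta}$, whence $x\in L_{\beta}\cap{}^{\omega}2=L_{\omega_{N+1}^{\text{CK}}}\cap{}^{\omega}2\subseteq L_{\omega_{\omega}^{\text{CK}}}$. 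By Theorem~\ref{relITRM} (with oracle $\emptyset$), $x$ is ITRM-computable.

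The main obstacle I expect is transporting the two classical forcing facts --- preservation of admissible ordinals and the product/mutual-genericity identity $L_{\beta}[y_{0}]\cap L_{\beta}[y_{1}]=L_{\beta}$ --- from models of $\mathrm{ZFC}$ to the admissible set $L_{\beta}\models\mathrm{KP}$, which requires checking that the forcing relation for Cohen and random forcing is adequately definable and that the relevant instances of the forcing theorem hold over $L_{\beta}$. A secondary point to nail down is the precise descriptive complexity of the halting sections, i.e. that divergence of a fixed ITRM is $\Sigma^{1}_{1}$, which is what guarantees measurability and the Baire property in $\mathrm{ZFC}$ and lets the generics/randoms land inside $A_{e}$.
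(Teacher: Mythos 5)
The paper does not actually prove Theorem~\ref{ManyOracles}: it imports it from \cite{CaSc}. Your reconstruction is, in substance, the argument given there, and it is the same machinery the paper itself deploys in Lemma~\ref{compandgen} and Theorem~\ref{mutuallyrandom}: split $Y$ over the countably many programs so that some $A_{e}$ is non-meager (resp.\ of positive measure), find two reals in $A_{e}$ that are mutually Cohen-generic (resp.\ mutually random) over a suitable countable level $L_{\beta}$, use preservation of admissibles to place $x$ in $L_{\beta}[y_{0}]\cap L_{\beta}[y_{1}]$, and conclude from the intersection identity $L_{\beta}[y_{0}]\cap L_{\beta}[y_{1}]=L_{\beta}$ together with Theorem~\ref{relITRM}. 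So the architecture is the intended one.

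Two points need attention. First, your complexity claim is wrong: divergence of an ITRM is witnessed by a strong loop occurring along a \emph{well-ordered} computation, and quantifying over well-ordered computation sequences makes both halting and divergence $\Sigma^{1}_{2}$ rather than $\Sigma^{1}_{1}$; indeed ITRMs decide $\mathrm{WO}$ and its complement, so halting sections can be $\Pi^{1}_{1}$-complete or $\Sigma^{1}_{1}$-complete, and $A_{e}$ need not be $\Pi^{1}_{1}$. What you actually need --- and what the paper uses elsewhere --- is that $A_{e}$ is provably $\Delta^{1}_{2}$, which already yields measurability and the Baire property. The ensuing remark that ``the largeness of $A_{e}$ is recognised inside $L_{\beta}$'' is then both unjustified and unnecessary: the Kuratowski--Ulam/Fubini intersection argument you also give produces the required pair outright in $V$, with no need for $L_{\beta}$ to see anything. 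Second, the two facts you defer --- preservation of the $\omega_{k}^{\text{CK}}$ under Cohen-generic and random reals over $L_{\beta}$, and the intersection identity for mutually generic/random pairs over admissible (provident) sets --- are not routine transfers from the ZFC setting but are precisely the technical content of \cite{CaSc} (they appear there as the lemmas this paper cites, e.g.\ Lemmas 28 and 46, together with their random-forcing analogues); the measure case in particular requires developing random forcing over models of KP. With those results imported, your proof is correct; without them it is an accurate skeleton of the proof rather than a proof.
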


\begin{corollary}{\label{Admpreserv}}
 Let $x$ be ITRM-c-random. Then, for all $i\in\omega$, $\omega_{i}^{\text{CK},x}=\omega_{i}^{\text{CK}}$.
\end{corollary}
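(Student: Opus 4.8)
The plan is to fix $i$ and show that the set $D_i:=\{x\subseteq\omega\mid \omega_i^{\text{CK},x}>\omega_i^{\text{CK}}\}$ is an ITRM-test in the sense of Definition \ref{randomdef}; since an ITRM-c-random real lies in no such set, this forces $\omega_i^{\text{CK},x}\le\omega_i^{\text{CK}}$, and as $i$ is arbitrary the corollary follows once the reverse inequality is known. The first thing I would record is a \emph{persistence} fact: for any real $x$ and any ordinal $\beta$, if $L_\beta$ is not admissible then neither is $L_\beta[x]$. Indeed, non-admissibility of $L_\beta$ is witnessed by a $\Sigma_1$-definable map from some $\gamma<\beta$ with range cofinal in $\beta$; evaluating the same $\Sigma_1$ definition over the larger structure $L_\beta[x]$ yields a $\Sigma_1$-definable relation whose range only grows and is therefore still cofinal in $\beta$, so $\Sigma_1$-collection fails over $L_\beta[x]$ as well. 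Consequently every $x$-admissible ordinal is admissible, which gives the reverse inequality $\omega_j^{\text{CK}}\le\omega_j^{\text{CK},x}$ for all $j$ and shows that the $x$-admissibles below $\omega_i^{\text{CK}}$ form a subset of $\{\omega_1^{\text{CK}},\dots,\omega_i^{\text{CK}}\}$; in particular $x\in D_i$ holds precisely when some $\omega_j^{\text{CK}}$ with $j\le i$ fails to be $x$-admissible.

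To see that $D_i$ is meager, I would invoke the classical preservation of admissibility under Cohen forcing. The structure $L_{\omega_i^{\text{CK}}}$ is countable, so the set of reals Cohen-generic over it is comeager; for such a generic $x$, each admissible ordinal $\omega_j^{\text{CK}}$ with $j\le i$ remains $x$-admissible, i.e. $L_{\omega_j^{\text{CK}}}[x]$ still satisfies KP. Combining this with the persistence fact above, the $x$-admissibles $\le\omega_i^{\text{CK}}$ are then exactly $\omega_1^{\text{CK}},\dots,\omega_i^{\text{CK}}$, so $\omega_j^{\text{CK},x}=\omega_j^{\text{CK}}$ for all $j\le i$ and hence $x\notin D_i$. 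Thus $D_i$ is contained in the complement of a comeager set and is therefore meager.

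It remains to check that $D_i$ is ITRM-decidable. Using the reformulation from the first paragraph, an ITRM with oracle $x$ should, for each $j\le i$ in turn, first compute a code for the ordinal $\omega_j^{\text{CK}}$ — this needs no oracle and is possible by Theorem \ref{relITRM} since $\omega_j^{\text{CK}}<\omega_\omega^{\text{CK}}$ — and then decide whether $L_{\omega_j^{\text{CK}}}[x]$ is admissible by constructing this level along the given code (which lies well below $\omega_\omega^{\text{CK},x}$) and searching for a $\Sigma_1$-definable cofinal map witnessing a failure of $\Sigma_1$-collection. The program outputs $1$ as soon as some $j\le i$ yields non-admissibility and $0$ otherwise. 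By Theorem \ref{hp} every such computation terminates before $\omega_\omega^{\text{CK},x}$, so the program halts on all inputs and decides $D_i$.

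Putting these together, $D_i$ is ITRM-decidable and meager, hence an ITRM-test, so the ITRM-c-random real $x$ lies outside it and $\omega_i^{\text{CK},x}=\omega_i^{\text{CK}}$. I expect the main obstacle to be the decidability step: while persistence and the forcing-preservation of admissibility are classical, one must verify carefully that the predicate ``$L_{\omega_j^{\text{CK}}}[x]$ is admissible'' can be decided \emph{uniformly and totally} by an ITRM in the oracle $x$, in particular that the relevant codes and $L[x]$-levels can be produced and the cofinality search carried out within the halting bounds supplied by Theorem \ref{hp}. A secondary point to pin down is that Cohen genericity over the single countable admissible set $L_{\omega_i^{\text{CK}}}$ preserves the admissibility of each $\omega_j^{\text{CK}}$, $j\le i$, simultaneously, so that the comeager set of generics really witnesses the meagerness of $D_i$.
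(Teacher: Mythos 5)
Your overall strategy is sound and your meagerness argument is the same as the paper's (Cohen-generics over a sufficiently high countable $L$-level preserve the first $i$ admissibles and form a comeager set), but you organize the decidability step differently. The paper does \emph{not} claim that $D_i=\{x\mid\omega_i^{\text{CK},x}>\omega_i^{\text{CK}}\}$ is itself ITRM-decidable; instead, for each program $P$ it considers the subset of $D_i$ consisting of those $x$ for which $P^{x}$ computes a code for $\omega_i^{\text{CK},x}$, shows each such subset is decidable (by computing a code for $L_{\omega_{i+1}^{\text{CK},x}}[x]$, locating $\omega_i^{\text{CK}}$ and $\omega_i^{\text{CK},x}$ inside it, and using a halting-problem solver for $P$), and then derives the contradiction from the fact that \emph{some} $P$ always computes such a code from $x$. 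You instead decide $D_i$ directly: compute a parameter-free code for $\omega_j^{\text{CK}}$, build $L_{\omega_j^{\text{CK}}}[x]$ along it, and test admissibility. This is a legitimate and arguably more direct route, but it silently relies on two uniformity facts that you should make explicit and attribute to the machinery of \cite{Ca}, \cite{Ca2} and \cite{ITRM2}: that a code for $L_{\alpha}[x]$ is computable uniformly from $x$ and a code for $\alpha$, and that satisfaction of the (infinitely many) KP axioms in a coded structure is ITRM-decidable. These are exactly the techniques the paper's own proof invokes, so nothing essentially new is needed; the trade-off is that the paper's decomposition into program-indexed subsets avoids having to verify that the full predicate ``$\omega_i^{\text{CK},x}>\omega_i^{\text{CK}}$'' is decidable, at the cost of the extra final step.

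One step of your write-up does not actually prove what it claims: the ``persistence'' argument. Upward $\Sigma_1$-absoluteness from $L_\beta$ to $L_\beta[x]$ guarantees that the old witnesses survive, but $\Sigma_1$-collection only requires a bound containing \emph{some} witness for each $\xi<\gamma$; since $L_\beta[x]$ may contain new, smaller witnesses, the particular instance of collection that fails over $L_\beta$ could well hold over $L_\beta[x]$. So ``the range only grows'' does not yield failure of collection. The fact you need --- every $x$-admissible ordinal is admissible, equivalently $L_{\alpha}$ is admissible whenever some transitive admissible set has ordinal height $\alpha$ --- is true and standard (it is the usual truncation/absoluteness argument for $L$ inside an admissible set, as in Barwise), and the paper tacitly uses it too when passing from ``not $>$'' to ``$=$''; you should cite it rather than argue it this way.
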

\begin{proof}
Lemma $46$ of \cite{CaSc} shows that $\omega_{i}^{\text{CK},x}=\omega_{i}^{\text{CK}}$ for all $i\in\omega$ whenever $x$ is Cohen-generic over $L_{\omega_{\omega}^{\text{CK}}}$ (see e.g. \cite{CaSc} or \cite{Ma})
 and that the set of Cohen-generics over $L_{\omega_{\omega}^{\text{CK}}}$
is comeager. Hence $\{x|\omega_{i}^{\text{CK},x}>\omega_{i}^{\text{CK}}\}$ is meager.
For each program $P$, the set of reals $x$ such that $\omega_{i}^{\text{CK},x}>\omega_{i}^{\text{CK}}$ and $P^{x}$ computes a code for $\omega_{i}^{\text{CK},x}$ is decidable using the techniques 
developed in \cite{Ca} and \cite{Ca2}. (The idea is to uniformly in the oracle $x$ compute a real $c$ coding $L_{\omega_{i+1}^{\text{CK},x}}[x]$ in which the natural numbers $m$ and $n$ coding
$\omega_{i}^{\text{CK}}$ and $\omega_{i}^{\text{CK},x}$ can be identified in the oracle $x$, and then to check - using a halting problem solver
for $P$, see Theorem \ref{hp} - whether $P^{x}$ computes a well-ordering of the same order type as the element of $L_{\omega_{i+1}^{\text{CK},x}}[x]$ coded by $n$ and 
finally whether the element coded by $m$ is an element of that coded by $n$.) 
Hence, if $x$ is ITRM-c-random, then there can be no ITRM-program $P$ computing such a code in the oracle $x$. But a code for $\omega_{i}^{\text{CK},x}$ is
ITRM-computable in the oracle $x$ for every real $x$ and every $i\in\omega$. Hence, we must have $\omega_{i}^{\text{CK},x}=\omega_{i}^{\text{CK}}$ for every $i\in\omega$, 
as desired.
\end{proof}

A notable conceptual difference between ITRM-c-randomness and Martin-L\"of-randomness is the absence of a universal test for the former:

\begin{thm}{\label{nouniversaltest}}
 There is no universal test for ITRM-c-randomness, i.e. the union of all ITRM-decidable meager sets is not ITRM-decidable.
\end{thm}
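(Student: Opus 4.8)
The plan is to argue by contradiction. Suppose the union $U$ of all ITRM-decidable meager sets were itself ITRM-decidable, say by a program $P$. Since there are only countably many ITRM-programs, $U$ is a countable union of meager sets and hence meager; thus $U$ would be the largest ITRM-decidable meager set, its complement $R:={}^{\omega}2\setminus U$ would be ITRM-decidable and comeager, and $R$ would be precisely the set of ITRM-c-random reals. The crucial feature I want to exploit is that $\mathrm{COMP}\subseteq U$: the set of ITRM-computable reals is countable, hence meager, and is ITRM-decidable via the bounded halting-problem solvers of Theorem \ref{hp}, so no ITRM-computable real is ITRM-c-random; equivalently $R\cap\mathrm{COMP}=\emptyset$. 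My goal is therefore to manufacture, from the hypothetical decision procedure $P$, an ITRM-computable real lying in $R$, which is the sought contradiction.

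First I would try to realise this via an effective Baire-category argument. Since $U$ is meager, it is a countable union $U=\bigcup_{k}N_{k}$ of nowhere dense sets; if this decomposition could be made ITRM-uniform---so that, given a finite condition $s$, one can ITRM-compute an extension $s'\supseteq s$ with $[s']\cap\overline{N_{k}}=\emptyset$---then player II would have an ITRM-computable winning strategy in the Banach--Mazur game for $R$. Running this strategy against the trivial (computable) strategy for player I produces a real $r=\bigcup_{k}t_{k}$ that avoids every $N_{k}$, so that $r\notin U$ and hence $r\in R$. Because the construction is an effective $\omega$-length recursion definable over $L_{\omega_{\omega}^{\mathrm{CK}}}$, its output $r$ lies in $L_{\omega_{\omega}^{\mathrm{CK}}}$ and is therefore ITRM-computable by Theorem \ref{relITRM}. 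This contradicts $R\cap\mathrm{COMP}=\emptyset$.

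The step I expect to be the main obstacle is precisely the passage from \emph{meager} to \emph{effectively meager}: plain ITRM-decidability of $U$ does not obviously yield an ITRM-uniform nowhere dense decomposition. Indeed, $\mathrm{COMP}$ itself is meager but not effectively meager in this sense, since an ITRM cannot uniformly enumerate all ITRM-computable reals---that would amount to solving an unbounded halting problem. To carry the decomposition out I would use the halting-time analysis of Theorem \ref{hp}, which bounds the running time of $P^{x}$ by $\omega_{\omega}^{\mathrm{CK},x}$, together with the technique---already used in the proof of Corollary \ref{Admpreserv}---of ITRM-computing, uniformly in $x$, a code for $L_{\omega_{\omega}^{\mathrm{CK},x}}[x]$ in which the relevant ordinal stages and the behaviour of $P$ can be read off; on ITRM-c-random inputs Corollary \ref{Admpreserv} moreover pins these ordinals down to their unrelativised values $\omega_{i}^{\mathrm{CK}}$. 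Should the effective decomposition resist this analysis, the fallback is to diagonalise directly against the candidate decider $P$: one builds, by a forcing-style recursion guided by $P$ and bounded by Theorem \ref{hp}, a real on which $P$ is forced to answer incorrectly relative to the true $U$, contradicting that $P$ decides $U$. Either way, the mathematical heart of the argument is the effectivity of the category decomposition, and this is where the ordinal-computational machinery of ITRMs must do the work.
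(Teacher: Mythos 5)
Your overall framework (derive a contradiction from a hypothetical decider $P$ for $U$ by producing a computable real outside $U$) is workable, but as written the proof has a genuine gap, plus one false subsidiary claim. The false claim first: $\mathrm{COMP}$ is \emph{not} ITRM-decidable via Theorem \ref{hp}. A single ITRM-program has a fixed number of registers and can therefore only solve the bounded halting problems $H_{n}$ for a fixed $n$; deciding $\mathrm{COMP}$ would amount to solving the unbounded halting problem (and indeed, the very forcing argument that proves this theorem shows that any program accepting all of $\mathrm{COMP}$ must also accept a comeager-in-an-interval set of non-computable generics). The conclusion you actually need, $\mathrm{COMP}\subseteq U$, is still true, but for a different reason: for each computable $x$ the singleton $\{x\}$ is an ITRM-decidable meager set. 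The same register-count issue undermines your proposed repair of the main step: no single ITRM can ``uniformly in $x$ compute a code for $L_{\omega_{\omega}^{\mathrm{CK},x}}[x]$''; it can only reach $L_{\omega_{k}^{\mathrm{CK},x}}[x]$ for a fixed $k$ determined by its register count.

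The main gap is exactly the one you flag and then leave open: passing from ``$U$ is meager and ITRM-decidable'' to an ITRM-effective nowhere dense decomposition. The missing idea is that the hypothetical decider $P$ uses some fixed number $n$ of registers, so by the bound of Theorem \ref{hp} and the forcing theorem over $L_{\omega_{n+1}^{\mathrm{CK}}}$, the outcome $P^{x}\downarrow=1$ on any real $x$ Cohen-generic over $L_{\omega_{n+1}^{\mathrm{CK}}}$ is decided by a finite condition; hence, if $U$ is meager, $P$ must reject \emph{every} such generic. Genericity over the fixed, ITRM-computably codable level $L_{\omega_{n+1}^{\mathrm{CK}}}$ is an effectively comeager condition (the dense sets are enumerated by a computable code for $L_{\omega_{n+1}^{\mathrm{CK}}+2}$), and your Banach--Mazur recursion run against this enumeration produces a generic lying in $L_{\omega_{n+1}^{\mathrm{CK}}+3}$, hence an ITRM-computable real outside $U$ --- the contradiction you want. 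Without fixing $n$, your plan would require a computable real generic over all of $L_{\omega_{\omega}^{\mathrm{CK}}}$, which cannot exist. The paper exploits the same localization in the dual direction: it exhibits an ITRM-decidable meager set $M$ of reals that are generic over $L_{\omega_{n+1}^{\mathrm{CK}}}$ but not over $L_{\omega_{n+2}^{\mathrm{CK}}}$, observes $M\subseteq U$, and concludes by forcing that $P$ accepts a set comeager in some interval, contradicting the meagerness of $U$ directly, with no detour through computable reals. Either direction works, but the register-count localization is the indispensable ingredient, and it is absent from your write-up.
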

\begin{proof}
 Assume otherwise. Let $U$ be the union of all ITRM-decidable meager sets and let $P$ be an ITRM-program such that $P$ decides $U$. Clearly, $U$, as a countable union of
meager sets, is meager. Let $P$ have
$n$ registers. Let $M$ be the set of reals $x$ that are Cohen-generic over $L_{\omega_{n+1}^{\text{CK}}}$, but not over $L_{\omega_{n+2}^{\text{CK}}}$. As the set of
reals which are Cohen-generic over $L_{\omega_{n+2}^{\text{CK}}}$ is comeager, $M$ is a subset of a meager set and hence meager. It is also easy to see that
$M$ is ITRM-decidable: To decide $M$, compute a code for $L_{\omega_{n+2}^{\text{CK}}}$; then, given a real $y$ in the oracle, search through the dense
subsets of Cohen-forcing in $L_{\omega_{n+2}^{\text{CK}}}$ to see whether $x$ intersects all those contained in $L_{\omega_{n+1}^{\text{CK}}}$, but fails to intersect at least one dense subset contained in $L_{\omega_{n+2}^{\text{CK}}}$.

Now pick $x\in M$. As $x\in M$ and $M$ is ITRM-decidable and meager,
we have $x\in U$, so $P^{x}\downarrow=1$. By genericity of $x$ over $L_{\omega_{n+1}^{\text{CK}}}$ and the forcing theorem, there is a condition $p$ of Cohen-forcing such that
$p\subseteq x$ and $p\Vdash P^{x}\downarrow=1$. Consequently, we have $p\Vdash P^{y}\downarrow=1$ for every $L_{\omega_{n+1}^{\text{CK}}}$-generic real $y$ extending $p$.
By absoluteness of computations, it follows that $P^{y}\downarrow=1$ for each such $y$.
Since $P$ decides $U$, each such $y$ hence belongs to $U$. But the set of these $y$ is comeager in some interval and hence not meager.
Hence $U$ has a non-meager subset and his thus not meager, a contradiction.
\end{proof}

\begin{lemma}{\label{compandgen}}
Let $a\subseteq\omega$ and suppose that $z$ is Cohen-generic over $L_{\omega_{\omega}^{\text{CK},a}+1}[a]$. Then $a\leq_{\text{ITRM}}z$ if and only if $a$ is ITRM-computable.
Consequently (as the set $C_{a}:=\{z\subseteq\omega\mid z\text{ is Cohen-generic over }L_{\omega_{\omega}^{\text{CK},a}+1}[a]\}$ is comeager), the set $S_{a}:=\{z\subseteq\omega\mid a\leq_{\text{ITRM}}z\}$ is meager whenever $a$ is not ITRM-computable.
\end{lemma}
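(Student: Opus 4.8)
The plan is to prove the two directions of the biconditional separately and then read off the ``consequently'' clause from the main equivalence together with comeagerness of $C_{a}$. The direction ``$a$ ITRM-computable $\Rightarrow a\leq_{\text{ITRM}}z$'' is immediate: a program computing $a$ from the empty oracle computes $a$ from any oracle $z$, so no genericity is needed here. All the work is in the converse, for which I would run a forcing argument in the spirit of the proof of Theorem \ref{nouniversaltest} and then invoke Theorem \ref{ManyOracles}.

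So assume $a\leq_{\text{ITRM}}z$, say $P^{z}\downarrow=a$ for some ITRM-program $P$, and that $z$ is Cohen-generic over $M:=L_{\omega_{\omega}^{\text{CK},a}+1}[a]$. Since $a\in M$, it has a canonical check-name, so ``$P^{\dot z}=\check a$'' is a statement of the forcing language over $M$; as $z$ is generic and the statement holds of $z$, the forcing theorem supplies a condition $p_{0}\subseteq z$ with $p_{0}\Vdash_{M}P^{\dot z}=\check a$. Next I would observe that $M$ is countable (because $\omega_{\omega}^{\text{CK},a}$ is countable), so Cohen forcing over $M$ has only countably many dense sets $D_{0},D_{1},\dots\in M$; consequently the set $Y_{0}:=\{z'\mid p_{0}\subseteq z'\text{ and }z'\text{ meets every }D_{k}\}$ is a $G_{\delta}$ (a countable intersection of the open dense sets ``meets $D_{k}$'', relativized to the basic clopen set determined by $p_{0}$), hence Borel and non-meager.

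The central step is then to transfer the computation from $z$ to every element of $Y_{0}$. Each $z'\in Y_{0}$ is $M$-generic and extends $p_{0}$, so $M[z']\models P^{z'}=a$; by absoluteness of ITRM-computations --- exactly the move used at the end of the proof of Theorem \ref{nouniversaltest} --- the true computation agrees, giving $P^{z'}\downarrow=a$ and hence $a\leq_{\text{ITRM}}z'$ for every $z'\in Y_{0}$. Thus $a$ is ITRM-computable from every oracle in the Borel non-meager set $Y_{0}$, and Theorem \ref{ManyOracles} forces $a$ to be ITRM-computable, completing the converse. I expect the main obstacle to be precisely this absoluteness transfer: one must check that the halting and output of $P^{z'}$ are computed correctly inside $M[z']$, which is what the ``$+1$'' in the definition of $M$ is there to guarantee --- it provides the extra level of $L[a]$ needed to define the forcing relation and to see the halting ordinal, whose size is controlled via Theorem \ref{hp} and the preservation of admissibles under Cohen genericity, as in Corollary \ref{Admpreserv}.

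Finally, for the ``consequently'' clause I would argue by contraposition. The set $C_{a}$ of reals Cohen-generic over $M$ is comeager by the same countable-model argument as for $Y_{0}$ (intersecting the countably many open dense ``meets $D_{k}$'' sets). If $a$ is not ITRM-computable, then by the equivalence just proved no $z\in C_{a}$ can satisfy $a\leq_{\text{ITRM}}z$; hence $S_{a}\cap C_{a}=\emptyset$, so $S_{a}$ is contained in the complement of the comeager set $C_{a}$ and is therefore meager, as claimed.
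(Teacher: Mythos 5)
Your proposal is correct, but the converse direction takes a genuinely different route from the paper. You pass from the forcing condition $p_{0}\Vdash P^{\dot z}\downarrow=\check a$ to the Borel, non-meager (indeed comeager-in-$[p_{0}]$) set $Y_{0}$ of generics extending $p_{0}$, note that $a$ is computable from every oracle in $Y_{0}$, and then invoke Theorem \ref{ManyOracles} as a black box. The paper instead takes two \emph{mutually} Cohen-generic reals $x,y$ over $L_{\omega_{\omega}^{\text{CK},a}+1}[a]$ both extending the condition, concludes $a\in L_{\omega_{\omega}^{\text{CK},x}}[x]\cap L_{\omega_{\omega}^{\text{CK},y}}[y]$, uses Corollary \ref{Admpreserv} to replace both subscripts by $\omega_{\omega}^{\text{CK}}$, and then applies the intersection lemma $L_{\alpha}[x]\cap L_{\alpha}[y]=L_{\alpha}$ for mutually generic reals over a provident $L_{\alpha}$ (Lemma 28 of \cite{CaSc}) to land $a$ in $L_{\omega_{\omega}^{\text{CK}}}$, whence computability via Theorem \ref{relITRM}. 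Your version is shorter and reuses a theorem already stated in the paper, at the cost of hiding the set-theoretic content inside the proof of Theorem \ref{ManyOracles}; the paper's version is more self-contained and makes visible exactly where the information $a$ gets ``washed out'' (the mutual-genericity intersection). You correctly flag the one genuine subtlety common to both arguments, namely that the halting of $P^{z'}$ must be visible inside the ground model's ordinals so that the forcing theorem applies to the $\Sigma_1$ halting statements; this is controlled by Theorem \ref{hp} together with the preservation of admissibles under Cohen genericity. Your treatment of the forward direction and of the ``consequently'' clause matches the paper's.
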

\begin{proof}
Assume that $z$ is Cohen-generic over $L_{\omega_{\omega}^{\text{CK},a}+1}[a]$ and $a\leq_{\text{ITRM}}z$. By the forcing theorem for provident sets (see e.g. Lemma $32$ of \cite{CaSc}), there is an ITRM-program $P$ and
a forcing condition $p$ such that $p\Vdash P^{\dot{G}}\downarrow=\check{a}$, where $\dot{G}$ is the canonical name for the generic filter and $\check{a}$ is the canonical name of $a$.
 Now, let $y$ and $z$ be mutually Cohen-generic over $L_{\omega_{\omega}^{\text{CK},a}+1}[a]$ both extending $p$.
Again by the forcing theorem and by absoluteness of computations, we must have $P^{x}\downarrow=a=P^{y}\downarrow$, so $a\in L_{\omega_{\omega}^{\text{CK},x}}[x]\cap L_{\omega_{\omega}^{\text{CK},y}}[y]$.
By Corollary \ref{Admpreserv}, $\omega_{\omega}^{\text{CK},x}=\omega_{\omega}^{\text{CK},y}=\omega_{\omega}^{\text{CK}}$. By Lemma $28$ of \cite{CaSc}, we have $L_{\alpha}[x]\cap L_{\alpha}[y]=L_{\alpha}$ whenever
$x$ and $y$ are mutually Cohen-generic over $L_{\alpha}$ and $\alpha$ is provident (see \cite{Ma}). Consequently, we have:
\begin{center} $a\in L_{\omega_{\omega}^{\text{CK},x}}[x]\cap L_{\omega_{\omega}^{\text{CK},y}}[y]=L_{\omega_{\omega}^{\text{CK}}}[x]\cap L_{\omega_{\omega}^{\text{CK}}}[y]
=L_{\omega_{\omega}^{\text{CK}}}$,\end{center}
 so $a$ is ITRM-computable.

The comeagerness of $C_{a}$ is standard (see e.g. Lemma $29$ of \cite{CaSc}). To see that $S_{a}$ is meager for non-ITRM-computable $a$, observe that the Cohen-generic reals over $L_{\omega_{\omega}^{\text{CK},a}+1}[a]$
form a comeager set of reals to none of which $a$ is reducible.
\end{proof}

\begin{defini}
Let $x,y\subseteq\omega$. 
If $x$ is ITRM-c-random relative to $y$ and $y$ is ITRM-c-random relative to $x$, we say that $x$ and $y$ are mutually ITRM-c-random.
\end{defini}

Intuitively, we should expect that mutually random reals have no non-trivial information in common. This is expressed by the following theorem:

\begin{thm}{\label{mutuallyrandom}}
 If $z$ is ITRM-computable from two mutually ITRM-c-random reals $x$ and $y$, then $z$ is ITRM-computable.
\end{thm}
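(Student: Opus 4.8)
The plan is to translate both computability hypotheses into statements about constructible hierarchies and then to exploit the triviality of the intersection of two mutually generic constructible universes. First I would record that mutual ITRM-c-randomness of $x$ and $y$ entails that each of $x,y$ is ITRM-c-random in the absolute sense, since every ITRM-decidable meager set is, a fortiori, decidable relative to any oracle (one simply ignores the oracle). Hence Corollary \ref{Admpreserv} applies to each and gives $\omega_i^{\text{CK},x}=\omega_i^{\text{CK},y}=\omega_i^{\text{CK}}$ for all $i\in\omega$, whence $\omega_\omega^{\text{CK},x}=\omega_\omega^{\text{CK},y}=\omega_\omega^{\text{CK}}$. Combining this with Theorem \ref{relITRM} and the hypotheses $z\leq_{\text{ITRM}}x$ and $z\leq_{\text{ITRM}}y$, I obtain $z\in L_{\omega_\omega^{\text{CK}}}[x]\cap L_{\omega_\omega^{\text{CK}}}[y]$. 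The theorem will therefore follow, via Theorem \ref{relITRM} again, once I show that this intersection equals $L_{\omega_\omega^{\text{CK}}}$.

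The key step is to upgrade mutual randomness to mutual Cohen-genericity over $L_{\omega_\omega^{\text{CK}}}$. For each fixed $n$, the set of reals that fail to be Cohen-generic over $L_{\omega_n^{\text{CK},y}}[y]$ is meager (genericity over a countable ground model is comeager) and, crucially, ITRM-decidable relative to $y$: using the techniques of \cite{Ca} and \cite{Ca2} one computes, uniformly in the oracle $y$, a code for $L_{\omega_{n+1}^{\text{CK},y}}[y]$ and then searches the dense subsets of Cohen-forcing lying in $L_{\omega_n^{\text{CK},y}}[y]$ to test whether the input meets all of them, exactly as in the proof of Theorem \ref{nouniversaltest}. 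Since $x$ is ITRM-c-random relative to $y$, it must avoid each of these $y$-decidable meager sets, so $x$ is Cohen-generic over $L_{\omega_n^{\text{CK},y}}[y]$ for every $n$, and hence over $L_{\omega_\omega^{\text{CK},y}}[y]=L_{\omega_\omega^{\text{CK}}}[y]$, since every dense set in the union lies in some level $L_{\omega_n^{\text{CK},y}}[y]$. The symmetric argument shows that $y$ is Cohen-generic over $L_{\omega_\omega^{\text{CK}}}[x]$; together these two one-sided genericities say precisely that $x$ and $y$ are mutually Cohen-generic over $L_{\omega_\omega^{\text{CK}}}$.

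With mutual genericity in hand, I would invoke the intersection lemma (Lemma $28$ of \cite{CaSc}): for $\alpha$ provident and $x,y$ mutually Cohen-generic over $L_\alpha$ one has $L_\alpha[x]\cap L_\alpha[y]=L_\alpha$. Taking $\alpha=\omega_\omega^{\text{CK}}$, which is provident as a limit of admissibles (see \cite{Ma}), yields $L_{\omega_\omega^{\text{CK}}}[x]\cap L_{\omega_\omega^{\text{CK}}}[y]=L_{\omega_\omega^{\text{CK}}}$, so $z\in L_{\omega_\omega^{\text{CK}}}$ and $z$ is ITRM-computable by Theorem \ref{relITRM}. The main obstacle is the bridge in the second paragraph: one must verify carefully that non-genericity over the relativized levels $L_{\omega_n^{\text{CK},y}}[y]$ is genuinely ITRM-decidable in the oracle $y$ — this is exactly what converts relativized randomness into genericity — and that the resulting one-sided genericities really do amount to mutual genericity in the precise sense demanded by Lemma $28$ of \cite{CaSc}, with the minor level adjustments ($L_{\omega_\omega^{\text{CK}}}$ versus $L_{\omega_\omega^{\text{CK}}+1}$) handled as in Lemma \ref{compandgen}. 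Everything else is bookkeeping resting on the preservation of admissibles supplied by Corollary \ref{Admpreserv}.
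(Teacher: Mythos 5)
Your proof is correct, but it takes a genuinely different route from the paper's. The paper argues by contradiction and stays entirely on the computational side: fixing a totally halting program $P$ with $P^{y}\downarrow=z$, it considers $A_{z}:=\{a\mid \forall i\, P^{a}(i)\downarrow=z(i)\}$, observes that $A_{z}$ is ITRM-decidable in the oracle $z$ and hence in the oracle $x$ (since $z\leq_{\text{ITRM}}x$), that $A_{z}\subseteq\{a\mid z\leq_{\text{ITRM}}a\}$ is meager by Lemma \ref{compandgen} when $z$ is not computable, and that $y\in A_{z}$ --- contradicting the ITRM-c-randomness of $y$ relative to $x$. You instead front-load the equivalence between relative ITRM-c-randomness and Cohen-genericity over $L_{\omega_{\omega}^{\text{CK},y}}[y]$ (which the paper only proves later, as Corollary \ref{randomandcohengeneric}; your sketch of it is essentially the paper's own argument there), upgrade mutual randomness to mutual genericity, and apply the intersection lemma $L_{\alpha}[x]\cap L_{\alpha}[y]=L_{\alpha}$ directly. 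Both proofs ultimately rest on the same two pillars --- decidability of genericity-type conditions and Lemma $28$ of \cite{CaSc}, which the paper reaches through Lemma \ref{compandgen} --- but yours is direct rather than by contradiction and yields the stronger structural fact that mutually ITRM-c-random reals are mutually Cohen-generic over $L_{\omega_{\omega}^{\text{CK}}}$ with $L_{\omega_{\omega}^{\text{CK}}}[x]\cap L_{\omega_{\omega}^{\text{CK}}}[y]=L_{\omega_{\omega}^{\text{CK}}}$, whereas the paper's is shorter given what is already available at that point in the text. The two points you flag yourself are indeed the ones to make fully explicit: that failure of genericity over each level $L_{\omega_{n}^{\text{CK},y}}[y]$ is a single $y$-decidable meager set (using a halting-problem solver to decide whether the input meets each dense set in a computed code for that level), and that one-sided genericity of $y$ over $L_{\omega_{\omega}^{\text{CK}}}[x]$ already gives mutual genericity in the sense required by Lemma $28$ of \cite{CaSc} via the product lemma; neither presents a real obstacle.
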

\begin{proof}
Assume otherwise, and suppose that $z$, $x$ and $y$ constitute a counterexample. By assumption, $z$ is computable from $x$. Also, by Theorem \ref{hp}, let $P$ be a program such that 
$P^{a}(i)\downarrow$ for every $a\subseteq\omega$, $i\in\omega$ and such that $P$
computes $z$ in the oracle $y$. In the oracle $z$, the set $A_{z}:=\{a|\forall{i\in\omega}P^{a}(i)\downarrow=z(i)\}$ is decidable by simply computing $P^{a}(i)$ for all $i\in\omega$ and comparing the result to the $i$th bit of $z$.
Clearly, we have $A_{z}\subseteq\{a\mid z\leq_{\text{ITRM}}a\}$. Hence, by our Lemma \ref{compandgen} above, $A_{z}$ is meager as $z$ is not ITRM-computable by assumption. 
Since $A_{z}$ is decidable in the oracle $z$ and $z$ is computable from $x$, $A_{z}$ is also decidable in the oracle $x$. Now, $x$ and $y$ are 
mutually ITRM-c-random, so that $y\notin A_{z}$. But $P$ computes $z$ in the oracle $y$, so $y\in A_{z}$ by definition, a contradiction.
\end{proof}

While, naturally, there are non-computable reals that are reducible to a $c$-random real $x$ (such as $x$ itself), intuitively, it should not be possible to compute a non-computable real that is `unique' is some effective sense from a random real.
We approximate this intuition by taking `unique' to mean `ITRM-recognizable' (see \cite{ITRM2}, \cite{Ca} or \cite{Ca2} for more information on ITRM-recognizability).
 It turns out that, in accordance with this intuition, recognizables that are ITRM-computable from ITRM-c-random reals are already ITRM-computable. 

\begin{defini}{\label{recogdef}}
$x\subseteq\omega$ is ITRM-recognizable if and only if $\{x\}$ is ITRM-decidable. RECOG denotes the set of recognizable reals.
\end{defini}

\begin{thm}{\label{weakrandomreducibility}}
 Let $x\in$ RECOG and let $y$ be ITRM-c-random such that $x\leq_{\text{ITRM}}y$. Then $x$ is ITRM-computable.
\end{thm}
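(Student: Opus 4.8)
The plan is to argue by contradiction, mirroring the strategy of Theorem \ref{mutuallyrandom} but exploiting recognizability to dispense with the second random real. Suppose $x$ is not ITRM-computable. Since $x\leq_{\text{ITRM}}y$, fix an ITRM-program $P$ with $P^{y}\downarrow=x$; and since $x\in$ RECOG, fix by Definition \ref{recogdef} a deciding program $Q$ witnessing that $\{x\}$ is ITRM-decidable, i.e. $Q^{a}\downarrow=1$ iff $a=x$ and $Q^{a}\downarrow=0$ otherwise.

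Consider the set $B:=\{z\subseteq\omega\mid P^{z}\downarrow=x\}$. The heart of the argument is to show that $B$ is ITRM-decidable with \emph{no} oracle at all. Given $z$ in the oracle, I would simulate $Q$ on the \emph{virtual} oracle $P^{z}$: whenever $Q$ queries the $i$th bit of its oracle, use a halting-problem solver for $P$ (available by Theorem \ref{hp}) to test whether $P^{z}(i)\downarrow$; if it diverges, then $P^{z}\neq x$, so halt the whole computation with output $0$, and otherwise feed the computed bit $P^{z}(i)$ to $Q$ and continue. If this simulation of $Q$ halts with output $1$, then $P^{z}$ is total and equals $x$, so $z\in B$ and we output $1$; if it halts with output $0$, then $P^{z}\neq x$ and we output $0$. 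Since $Q$ halts on every oracle, the procedure always terminates, so $B$ is ITRM-decidable.

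Next, observe that $B\subseteq S_{x}=\{z\mid x\leq_{\text{ITRM}}z\}$, since $P^{z}\downarrow=x$ certainly implies $x\leq_{\text{ITRM}}z$. By Lemma \ref{compandgen}, $S_{x}$ is meager because $x$ is assumed non-computable; hence $B$ is meager as well. Thus $B$ is an ITRM-decidable meager set, i.e. an ITRM-test. But $P^{y}\downarrow=x$ gives $y\in B$, contradicting the assumption that $y$ is ITRM-c-random. Therefore $x$ must be ITRM-computable.

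The main obstacle I anticipate is the rigorous justification that $B$ is decidable without an oracle: the legitimacy of simulating the recognizer $Q$ while computing its oracle bits on the fly via $P^{z}$ and detecting divergence with the halting-problem solver. This is exactly the point where recognizability does real work. In Theorem \ref{mutuallyrandom} one could only decide the analogous set relative to $z$ (and thence relative to $x$), which forced the use of two mutually random reals; here the ability to test \emph{is this real equal to $x$?} without $x$ as an oracle collapses the argument to a single c-random $y$. Some care is also needed to ensure the simulating program has enough registers, since the halting-problem solver for $P\in\mathbb{P}_{n}$ uses more than $n$ registers by Theorem \ref{hp}, but this bookkeeping is routine.
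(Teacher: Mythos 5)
Your proposal is correct and follows essentially the same route as the paper: the paper also considers the set $S=\{z\mid P^{z}\downarrow=x\}$, shows it is meager via Lemma \ref{compandgen} (through the argument of Theorem \ref{mutuallyrandom}), and decides it by first using a halting-problem solver for $P$ to confirm totality of $P^{z}$ and then running the recognizer $Q$ on the computed real. The only cosmetic difference is that the paper checks totality of $P^{z}$ up front rather than interleaving the bit computations with the simulation of $Q$; both implementations are sound.
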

\begin{proof}
Let $x\in$ RECOG$\setminus$COMP be computable from $y$, say by program $P$, and let $Q$ be a program that recognizes $x$. 
The set $S:=\{z\mid P^{z}\downarrow=x\}$ is meager as in the proof of Theorem \ref{mutuallyrandom}. But $S$ is decidable:
Given a real $z$, use a halting-problem solver for $P$ (which exists uniformly in the oracle by Theorem \ref{hp}) to test whether $P^{z}(i)\downarrow$ for all $i\in\omega$; if not,
then $z\notin S$. Otherwise, use $Q$ to check whether the real computed by $P^{z}$ is equal to $x$. If not, then $z\notin S$, otherwise $z\in S$. As $P^{y}$ computes $x$,
it follows that $y\in S$, so that $y$ is an element of an ITRM-decidable meager set. Hence $y$ is not ITRM-c-random, a contradiction.
\end{proof}

\noindent
\textbf{Remark}: Let $(P_{i}|i\in\omega)$ be a natural enumeration of the ITRM-programs.
 Together with the fact that the halting number $h=\{i\in\omega\mid P_{i}\downarrow\}$ for ITRMs is recognizable (see \cite{Ca2}), this implies in particular that the halting problem for ITRMs is not 
ITRM-reducible to an ITRM-c-random real. In particular, the Kucera-Gacs theorem, which says that every real is reducible to a random real (see e.g. Theorem $8.3.2$ of \cite{DoHi}), does not hold in our setting.

\section{An analogue to van Lambalgen's theorem}

A crucial result of classical algorithmic randomness is van Lambalgen's theorem, which
 states that for reals $a$ and $b$, $a\oplus b$ is ML-random if and only if $a$ is ML-random and $b$ is ML-random relative to $a$.
In this section, we demonstrate an analogous result for ITRM-c-randomness. This will be a crucial ingredient in our
considerations on ITRM-degrees below.
%

\begin{lemma}
 Let $Q$ be a deciding ITRM-program using $n$ registers and $a\subseteq\omega$. Then $\{y|Q^{y\oplus a}\downarrow=1\}$ is meager if and only if
$Q^{x\oplus a}\downarrow=0$ for all $x\in L_{\omega_{n+1}^{\text{CK},a}+3}[a]$ that are Cohen-generic over $L_{\omega_{n+1}^{\text{CK},a}+1}[a]$.
\end{lemma}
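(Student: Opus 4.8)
The plan is to reduce the statement to a clean dichotomy on forcing conditions, so that the two directions of the biconditional fall out simultaneously. Write $\alpha:=\omega_{n+1}^{\text{CK},a}$ and $D:=\{y\mid Q^{y\oplus a}\downarrow=1\}$. Since $Q$ is deciding, $Q^{y\oplus a}\downarrow\in\{0,1\}$ for every $y$, so the complement of $D$ is exactly $\{y\mid Q^{y\oplus a}\downarrow=0\}$. By Theorem \ref{hp}, whenever $Q^{y\oplus a}$ halts it does so in fewer than $\omega_{n+1}^{\text{CK},y\oplus a}$ steps; and for $x$ Cohen-generic over $L_{\alpha+1}[a]$, admissible preservation under Cohen forcing (the local analogue of Corollary \ref{Admpreserv}) gives $\omega_{n+1}^{\text{CK},x\oplus a}=\alpha$, so the entire computation $Q^{x\oplus a}$ is carried out and decided inside $L_{\alpha+1}[x\oplus a]$. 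This is what makes genericity over $L_{\alpha+1}[a]$ the right notion: $L_{\alpha+1}[a]$ is provident, so the forcing theorem for provident sets (Lemma $32$ of \cite{CaSc}) applies to Cohen forcing, and for every such generic $x$ we have $Q^{x\oplus a}\downarrow=1$ if and only if some condition $p\subseteq x$ satisfies $p\Vdash Q^{\dot{G}\oplus a}\downarrow=1$.

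First I would establish the dichotomy. Either (i) some condition $p$ forces $Q^{\dot{G}\oplus a}\downarrow=1$, or (ii) none does. In case (ii), since the forcing relation decides the statement $Q^{\dot{G}\oplus a}\downarrow=1$ densely and $Q$ is deciding, every condition has an extension forcing $Q^{\dot{G}\oplus a}\downarrow=0$; that is, the conditions forcing output $0$ are dense. Hence in case (ii) every real generic over $L_{\alpha+1}[a]$ — in particular every such $x\in L_{\alpha+3}[a]$ — satisfies $Q^{x\oplus a}\downarrow=0$, so the right-hand side holds; and $D$ is disjoint from the comeager set of generics, hence contained in a meager set and therefore meager, so the left-hand side holds as well.

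Next I would treat case (i). If $p\Vdash Q^{\dot{G}\oplus a}\downarrow=1$, then by absoluteness of computations every real generic over $L_{\alpha+1}[a]$ extending $p$ lies in $D$; since these generics are comeager in the basic open set determined by $p$, the set $D$ is non-meager, so the left-hand side fails. For the right-hand side I need a witnessing generic at a low level: a standard construction — count $L_{\alpha+1}[a]$ (which is countable, a bijection with $\omega$ appearing a level or two higher) to enumerate its dense subsets of Cohen forcing, then meet them one by one starting from $p$ — produces a real $x\supseteq p$ generic over $L_{\alpha+1}[a]$ with $x\in L_{\alpha+3}[a]$, and this $x$ satisfies $Q^{x\oplus a}\downarrow=1$. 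Hence in case (i) both sides fail, and combining the two cases yields the equivalence.

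The main obstacle I expect is the bookkeeping that pins down the two specific levels $\alpha+1$ and $\alpha+3$: verifying that Cohen-genericity over $L_{\alpha+1}[a]$ preserves the first $n+1$ relativized admissibles (so that a halting computation is confined below $\alpha$ and is correctly captured by the forcing relation computed in $L_{\alpha+1}[a]$), that $L_{\alpha+1}[a]$ is provident so the forcing theorem of \cite{CaSc} is available, and above all that generics extending an arbitrary condition already appear by stage $\alpha+3$ — this last point is exactly what supplies the witness in case (i) and is the only place where the precise value $+3$ is used. Everything else is the routine forcing dichotomy together with the comeagerness of the class of generics.
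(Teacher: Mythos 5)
Your proof is correct and follows essentially the same route as the paper's: the forcing theorem over the (provident) level $L_{\omega_{n+1}^{\text{CK},a}+1}[a]$ together with absoluteness of computations shows that the output of $Q$ on a generic is decided by a condition and hence constant on a set comeager in an interval, the comeagerness of the generics handles the meagerness of the complement, and the countability of $L_{\omega_{n+1}^{\text{CK},a}+1}[a]$ inside $L_{\omega_{n+1}^{\text{CK},a}+3}[a]$ supplies the low-level generic witnesses extending any condition. Your reorganization as a dichotomy on forcing conditions, and your explicit appeal to preservation of the relevant admissibles under Cohen genericity to confine the computation below $\omega_{n+1}^{\text{CK},a}$, only make explicit what the paper's argument uses implicitly.
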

\begin{proof}
By absoluteness of computations and the bound on ITRM-halting times (see Theorem \ref{hp}), 
$Q^{x\oplus a}\downarrow=0$ implies that $Q^{x\oplus a}\downarrow=0$ also holds in $L_{\omega_{n+1}^{\text{CK},a}}[a]$. As this is expressible by a $\Sigma_1$-formula, it must be forced by some condition $p$ by the 
forcing theorem over KP (see e.g. Theorem $10.10$ of \cite{Ma}).

Hence every Cohen-generic $y$ extending $p$ will satisfy $Q^{y\oplus a}\downarrow=0$. The set $C$ of reals Cohen-generic over $L_{\omega_{n+1}^{\text{CK},a}+1}[a]$ 
is comeager. Hence, if $Q^{x\oplus a}\downarrow=0$ for some $x\in C$, then $Q^{x\oplus a}\downarrow=0$ for a non-meager (in fact comeager in some interval) set $C^{\prime}$.
Now, for each condition $p$, $L_{\omega_{n+1}^{\text{CK},a}+3}[a]$ will contain a generic filter over $L_{\omega_{n+1}^{\text{CK},a}+1}[a]$ extending $p$ (as $L_{\omega_{n+1}^{\text{CK},a}+1}[a]$ is countable
in $L_{\omega_{n+1}^{\text{CK},a}+3}[a]$). 
Hence, if $Q^{x\oplus a}\downarrow=0$ for all $x\in C\cap L_{\omega_{n+1}^{\text{CK},a}+3}[a]$, then this holds for all elements of $C$ and the complement $\{y|Q^{y\oplus a}\downarrow=1\}$ is therefore meager.

If, on the other hand, $Q^{x\oplus a}\downarrow=1$ for some such $x$, then this already holds for all $x$ in some non-meager (in fact comeager in some interval) set $C^{\prime}$ by the same reasoning. 
\end{proof}

\begin{corollary}{\label{decidemeasure}}
 For a deciding ITRM-program $Q$ using $n$ registers, there exists an ITRM-program $P$ such that, for all $x,y\in\mathfrak{P}(\omega)$, $P^{x}\downarrow=1$ if and only if $\{y|Q^{x\oplus y}\downarrow=1\}$ is non-meager and
$P^{x}\downarrow=0$, otherwise.
\end{corollary}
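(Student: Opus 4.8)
The plan is to leverage the preceding lemma, which characterizes meagerness of $\{y \mid Q^{x\oplus y}\downarrow=1\}$ in purely local terms: the set is meager if and only if $Q^{(x\oplus z)}\downarrow=0$ for every $z\in L_{\omega_{n+1}^{\mathrm{CK},x}+3}[x]$ that is Cohen-generic over $L_{\omega_{n+1}^{\mathrm{CK},x}+1}[x]$. The key point is that this characterization refers only to countably many witnesses, all of which live inside a level of the constructible hierarchy that is itself ITRM-computable relative to $x$. So the strategy is to have $P$, on oracle $x$, compute a code for $L_{\omega_{n+1}^{\mathrm{CK},x}+3}[x]$, enumerate within it the relevant generic reals $z$, simulate $Q^{x\oplus z}$ on each (using a halting-problem solver to guarantee termination), and output $0$ exactly when all such simulations return $0$ and $1$ otherwise.

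First I would invoke Theorem \ref{relITRM} (or rather the techniques cited in the proof of Corollary \ref{Admpreserv}) to argue that, uniformly in the oracle $x$, an ITRM can compute a real $c$ coding the structure $L_{\omega_{n+1}^{\mathrm{CK},x}+3}[x]$; since $n$ is fixed and the relevant ordinal is below $\omega_{\omega}^{\mathrm{CK},x}$, this code is genuinely ITRM-computable from $x$. Second, working with this code, I would identify (in an ITRM-decidable way, using the definition of Cohen forcing inside the coded structure) exactly those elements $z$ of $L_{\omega_{n+1}^{\mathrm{CK},x}+3}[x]$ that are Cohen-generic over $L_{\omega_{n+1}^{\mathrm{CK},x}+1}[x]$ — this amounts to checking intersection with each dense set coded at the lower level, which is a bounded search through the code. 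Third, for each such $z$, I would form $x\oplus z$ and run $Q^{x\oplus z}$; by Theorem \ref{hp} a halting-problem solver for $Q$ exists uniformly in the oracle, so the machine can decide termination and read off the output bit.

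Finally, $P$ outputs $1$ if some witness $z$ yields $Q^{x\oplus z}\downarrow=1$ and outputs $0$ if all witnesses yield value $0$; by the preceding lemma this is precisely the condition that $\{y\mid Q^{x\oplus y}\downarrow=1\}$ is non-meager, which gives the desired specification of $P$. I expect the main obstacle to be the bookkeeping in the second step: one must be careful that the notion ``$z\in L_{\omega_{n+1}^{\mathrm{CK},x}+3}[x]$ is Cohen-generic over $L_{\omega_{n+1}^{\mathrm{CK},x}+1}[x]$'' is recognized correctly and uniformly from the code, in particular that the two distinct ordinal levels $\omega_{n+1}^{\mathrm{CK},x}+1$ and $\omega_{n+1}^{\mathrm{CK},x}+3$ can be located and their generic filters manipulated inside the single computed code $c$. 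This is exactly the style of argument already carried out in the proof of Corollary \ref{Admpreserv} and in \cite{Ca}, \cite{Ca2}, so I would cite those techniques rather than reproduce the coding details, and the remaining verification — that the lemma's two-sided characterization matches the two output values of $P$ — is then immediate.
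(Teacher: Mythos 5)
Your proposal is correct and follows essentially the same route as the paper: compute a code for $L_{\omega_{n+1}^{\text{CK},x}+3}[x]$ uniformly in $x$, locate the reals therein that are Cohen-generic over $L_{\omega_{n+1}^{\text{CK},x}+1}[x]$, run $Q^{x\oplus z}$ on each, and output according to the characterization in the preceding lemma. The only cosmetic difference is your explicit appeal to a halting-problem solver, which is not strictly needed since $Q$ is assumed to be deciding and hence always halts.
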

\begin{proof}
 From $x$, compute, using sufficiently many extra registers, the $<_{L}$-minimal real code $c:=\text{cc}(L_{\omega_{n+1}^{\text{CK},x}+3}[x])$ for $L_{\omega_{n+1}^{\text{CK},x}+3}[x]$ in the oracle $x$. This can be done uniformly in $x$. 
Then, using $c$, one can use the recursive algorithm developed in
in section $6$ of \cite{ITRM} to evaluate statements in $L_{\omega_{n+1}^{\text{CK},x}+3}[x]$. Hence, we can search through $c$, identify all elements which code reals $y\in L_{\omega_{n+1}^{\text{CK},x}+3}[x]$ that are Cohen-generic
over $L_{\omega_{n+1}^{\text{CK},x}+1}$ and run $Q^{x\oplus y}$ for each of them to see whether $Q^{x\oplus y}\downarrow=1$; if yes, then we return $1$, otherwise, we return $0$.
\end{proof}

\begin{corollary}{\label{randomandcohengeneric}}
Let $x,y$ be real numbers. Then $x$ is ITRM-c-random in the oracle $y$ if and only if $x$ is Cohen-generic over $L_{\omega_{\omega}^{\text{CK},y}}[y]$.
\end{corollary}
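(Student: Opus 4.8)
The plan is to prove the two implications separately, using Theorem \ref{relITRM} for the direction ``c-random relative to $y$ $\Rightarrow$ Cohen-generic'' and the Lemma immediately preceding Corollary \ref{decidemeasure}, together with the forcing theorem, for the converse. I would phrase the first direction contrapositively as ``not Cohen-generic $\Rightarrow$ not c-random relative to $y$''.

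For that contrapositive, suppose $x$ fails to be Cohen-generic over $L_{\omega_{\omega}^{\text{CK},y}}[y]$. Then there is a dense set $D$ of Cohen conditions with $D\in L_{\omega_{\omega}^{\text{CK},y}}[y]$ which $x$ does not meet. Since $L_{\omega_{\omega}^{\text{CK},y}}[y]=\bigcup_{n}L_{\omega_{n}^{\text{CK},y}}[y]$, I have $D\in L_{\omega_{n}^{\text{CK},y}}[y]$ for some $n$, so by Theorem \ref{relITRM} a code for $D$ is ITRM-computable from $y$. The set $N_{D}:=\{z\mid z\text{ meets no condition of }D\}$ is closed and nowhere dense, hence meager, and it is ITRM-decidable in the oracle $y$: one computes the code for $D$ from $y$ and searches it for a condition $s$ with $s\subset z$. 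As $x\in N_{D}$, this exhibits $x$ inside an ITRM-test relative to $y$, so $x$ is not ITRM-c-random relative to $y$, as required.

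For the converse, I would assume $x$ is Cohen-generic over $L_{\omega_{\omega}^{\text{CK},y}}[y]$ but, towards a contradiction, lies in some meager set $X$ decided in the oracle $y$ by a program $P$ using $n$ registers, so that $P^{x\oplus y}\downarrow=1$. Applying the preceding Lemma with $Q=P$ and $a=y$, the meagerness of $X=\{z\mid P^{z\oplus y}\downarrow=1\}$ yields $P^{z\oplus y}\downarrow=0$ for every $z\in L_{\omega_{n+1}^{\text{CK},y}+3}[y]$ that is Cohen-generic over $L_{\omega_{n+1}^{\text{CK},y}+1}[y]$. Since $x$ is generic over the larger model $L_{\omega_{\omega}^{\text{CK},y}}[y]$, it is in particular generic over the smaller model $L_{\omega_{n+1}^{\text{CK},y}+1}[y]$; hence by the forcing theorem the $\Sigma_{1}$-fact $P^{x\oplus y}\downarrow=1$ is forced by some condition $p\subseteq x$. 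Then every generic extending $p$ computes the value $1$, and $L_{\omega_{n+1}^{\text{CK},y}+3}[y]$ contains such a generic extending $p$ (the forcing model being countable there), contradicting the conclusion of the Lemma. This contradiction shows $x$ is ITRM-c-random relative to $y$.

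The hard part will be the bookkeeping at the correct admissible stage: I must match the register bound $n$ of the deciding program to the admissible level $\omega_{n+1}^{\text{CK},y}$, check via Theorem \ref{hp} that the halting computation $P^{x\oplus y}$ is captured by a $\Sigma_{1}$-statement forced at that stage, and verify that genericity of $x$ over the limit model $L_{\omega_{\omega}^{\text{CK},y}}[y]$ descends to the countable model $L_{\omega_{n+1}^{\text{CK},y}+1}[y]$ used in the Lemma. Once these alignments are in place, the remaining work is a routine combination of Theorem \ref{relITRM}, the Lemma, and the forcing theorem.
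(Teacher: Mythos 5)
Your proposal is correct and follows essentially the same route as the paper: the first direction is the paper's argument (that each dense $D\in L_{\omega_{\omega}^{\text{CK},y}}[y]$ gives a meager, $y$-decidable set of reals missing $D$) stated contrapositively, and the second direction uses the same forcing-theorem mechanism, merely routed through the Lemma preceding Corollary \ref{decidemeasure} rather than concluding non-meagerness of the decided set directly. The bookkeeping you flag (matching the register bound $n$ to the stage $\omega_{n+1}^{\text{CK},y}$ and descending genericity to the smaller model) goes through exactly as you anticipate.
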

\begin{proof}
Let $S$ denote the set of Cohen-generic reals over $L_{\omega_{\omega}^{\text{CK},y}}[y]$.
Then $x\in S$ if and only if $x\cap D\neq\emptyset$ for every dense subset $D\in L_{\omega_{\omega}^{\text{CK},y}}[y]$ of Cohen-forcing. 
Clearly, for every such $D$, $G_{D}:=\{y\mid y\cap D\neq\emptyset\}$ is comeager and ITRM-decidable in the oracle $y$ (and so its complement is ITRM-decidable in $y$ and meager), 
so every real number that is ITRM-c-random real relative to $y$ must be contained in every $G_{D}$ and hence also in $S$.

On the other hand, if $x\in S$ and $P^{x\oplus y}\downarrow=1$ for some ITRM-program $P$ that is deciding in the oracle $y$,
then there is some finite $p\subseteq x$ such that $P^{z\oplus y}\downarrow=1$ for every $p\subset z\in S$, so the set decided
by $P$ in $y$ is not meager. Hence $x$ is not an element of any meager set ITRM-decidable
in the oracle $y$, so $x$ is ITRM-c-random relative to $y$.
\end{proof}



We now give an ITRM-analogue of van Lambalgen's theorem and prove it following the strategy used in the classical setting, see e.g. \cite{DoHi}, Theorem $6.9.1$ and $6.9.2$.

\begin{thm}{\label{Dir1}}
 Assume that $a$ and $b$ are reals such that $a\oplus b$ is not ITRM-c-random. Then $a$ is not ITRM-c-random or $b$ is not ITRM-c-random relative to $a$.
\end{thm}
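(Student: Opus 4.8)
The plan is to prove the contrapositive: assuming that $a$ is ITRM-c-random and that $b$ is ITRM-c-random relative to $a$, I will show that $a\oplus b$ is ITRM-c-random. The entire argument will be routed through the forcing characterization of Corollary \ref{randomandcohengeneric}, which converts each of the three randomness statements into a Cohen-genericity statement over the appropriate level of the constructible hierarchy, so that the combinatorics of meager ITRM-tests is replaced by the (much more tractable) combinatorics of dense sets and generic filters.

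First I would carry out the translation. Applying Corollary \ref{randomandcohengeneric} with oracle $\emptyset$, the hypothesis ``$a$ is ITRM-c-random'' becomes ``$a$ is Cohen-generic over $L_{\omega_{\omega}^{\text{CK}}}$'', and the desired conclusion ``$a\oplus b$ is ITRM-c-random'' becomes ``$a\oplus b$ is Cohen-generic over $L_{\omega_{\omega}^{\text{CK}}}$''. Applying the same corollary with oracle $a$, the hypothesis ``$b$ is ITRM-c-random relative to $a$'' becomes ``$b$ is Cohen-generic over $L_{\omega_{\omega}^{\text{CK},a}}[a]$''. The key simplification is that, since $a$ is ITRM-c-random, Corollary \ref{Admpreserv} gives $\omega_{i}^{\text{CK},a}=\omega_{i}^{\text{CK}}$ for every $i\in\omega$, whence $\omega_{\omega}^{\text{CK},a}=\omega_{\omega}^{\text{CK}}=:\alpha$. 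So, writing $L_{\alpha}$ for the common ground model, my hypotheses reduce to: $a$ is Cohen-generic over $L_{\alpha}$ and $b$ is Cohen-generic over $L_{\alpha}[a]$, and my goal reduces to: $a\oplus b$ is Cohen-generic over $L_{\alpha}$.

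The remaining step is the product lemma for Cohen forcing. Writing $\mathbb{C}$ for the poset of finite binary conditions, the standard argument shows that $a$ being $\mathbb{C}$-generic over $L_{\alpha}$ together with $b$ being $\mathbb{C}$-generic over $L_{\alpha}[a]$ is equivalent to the pair $(a,b)$ being $(\mathbb{C}\times\mathbb{C})$-generic over $L_{\alpha}$; concretely, given a dense $D\in L_{\alpha}$ of $\mathbb{C}\times\mathbb{C}$, the section $D_{a}:=\{q\mid\exists p\subseteq a\,(p,q)\in D\}$ is dense and lies in $L_{\alpha}[a]$, so $b$ meets it and hence $(a,b)$ meets $D$. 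Since $\mathbb{C}\times\mathbb{C}\cong\mathbb{C}$ via a recursive isomorphism that is an element of $L_{\alpha}$, and since this isomorphism identifies the product-generic pair $(a,b)$ with the join $a\oplus b$, genericity transfers: $a\oplus b$ is $\mathbb{C}$-generic over $L_{\alpha}$. Composing with Corollary \ref{randomandcohengeneric} (oracle $\emptyset$) yields that $a\oplus b$ is ITRM-c-random, which completes the contrapositive and hence the theorem.

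I expect the main obstacle to be justifying that the forcing machinery—the definability of the forcing relation, the truth lemma, and the product lemma for $\mathbb{C}\times\mathbb{C}$—is actually available over $L_{\alpha}$, which is only a limit of admissibles and not a model of full ZFC. Here I would lean on the forcing theorem for provident sets (Lemma $32$ of \cite{CaSc}) already invoked in Lemma \ref{compandgen}, noting that $\alpha=\omega_{\omega}^{\text{CK}}$ is provident and that the section-density argument above uses only $\Delta_{1}$-definable dense sets, so it survives at this level. A secondary, more bookkeeping point is to verify that the identification of the product-generic pair with the single real $a\oplus b$ is faithful for the coding in use; for this the preservation of (non-)meagerness under $(x,y)\mapsto x\oplus y$ supplied by the first standard Proposition lets me transfer the relevant density and comeagerness facts between $[0,1]\times[0,1]$ and the space of reals without re-deriving them.
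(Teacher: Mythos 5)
Your proof is correct, but it takes a genuinely different route from the paper. The paper proves the statement directly and ``computationally'': given an ITRM-decidable meager $X\ni a\oplus b$, it forms $Y:=\{x\mid\{y\mid x\oplus y\in X\}\text{ non-meager}\}$, shows $Y$ is ITRM-decidable via Corollary \ref{decidemeasure}, shows $Y$ is meager via the Kuratowski--Ulam theorem (using that $X$ is provably $\Delta^1_2$ and hence has the Baire property), and then splits into the two cases $a\in Y$ (so $a$ is not ITRM-c-random) and $a\notin Y$ (so the section $\{y\mid a\oplus y\in X\}$ is a meager set, decidable in the oracle $a$, containing $b$). You instead pass through Corollary \ref{randomandcohengeneric} and Corollary \ref{Admpreserv} to translate everything into Cohen-genericity over $L_{\omega_\omega^{\text{CK}}}$ and invoke the product lemma. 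This is legitimate (both corollaries precede Theorem \ref{Dir1}, so there is no circularity), and it is arguably shorter once the genericity characterization is in hand; but note that it inverts the paper's intended logical direction --- the remark after Theorem \ref{vLamb} presents exactly the product-forcing fact as a \emph{consequence} of van Lambalgen plus Corollary \ref{randomandcohengeneric}, whereas you use it as the engine. What the paper's route buys is a proof parallel to the classical van Lambalgen argument that stays entirely within the language of decidable meager sets; what yours buys is a reduction to standard forcing combinatorics, at the cost of having to certify the product lemma over a provident (non-ZFC) level of $L$, which you correctly flag. Two small points of care: the density of your section $D_a$ is not automatic from the density of $D$ --- it needs the genericity of $a$ applied to the auxiliary dense sets $E_{q_0}:=\{p\mid\exists q\leq q_0\,(p,q)\in D\}\in L_\alpha$, which you should make explicit; and the closing appeal to the meagerness-preservation proposition is not really what is needed for identifying $(a,b)$ with $a\oplus b$ --- what is needed is that the interleaving map is a dense embedding of $\mathbb{C}\times\mathbb{C}$ into $\mathbb{C}$ lying in $L_\alpha$, so that dense sets correspond. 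Neither point is a gap, only a presentational tightening.
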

\begin{proof}
As $a\oplus b$ is not ITRM-c-random, let $X$ be an ITRM-decidable meager set of reals such that $a\oplus b\in X$. Suppose that $P$ is a program deciding $X$.

Let $Y:=\{x|\{y\mid x\oplus y\in X\}\text{ non-meager}\}$. By Corollary \ref{decidemeasure}, $Y$ is ITRM-decidable.

We claim that $Y$ is meager. First, $Y$ is provably $\Delta_{2}^{1}$ and hence has the Baire property (see e.g. Exercise $14.4$ of \cite{Ka}). Hence, by
the Kuratowski-Ulam-theorem (see e.g. \cite{Mo}, Exercise 5A.9), $Y$ is meager. Consequently, if $a\in Y$, then $a$ is not ITRM-c-random.

Now suppose that $a\notin Y$. This means that $\{y\mid a\oplus y\in X\}$ is meager. But $S:=\{y\mid a\oplus y\in X\}$ is easily seen to be ITRM-decidable in the oracle $a$ and
$b\in S$. Hence $b$ is not ITRM-c-random relative to $a$.
\end{proof}

\begin{thm}{\label{Dir2}}
 Assume that $a$ and $b$ are reals such that $a\oplus b$ is ITRM-c-random. Then $a$ is ITRM-c-random and $b$ is ITRM-c-random relative to $a$.
\end{thm}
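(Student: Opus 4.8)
The plan is to prove the contrapositive of both assertions, exactly mirroring the structure of Theorem \ref{Dir1}. That is, I would assume that $a\oplus b$ is ITRM-c-random and derive, separately, that $a$ is ITRM-c-random and that $b$ is ITRM-c-random relative to $a$. Both directions proceed by contradiction: I suppose one of the two conclusions fails and manufacture from the witnessing meager decidable set a meager decidable set containing $a\oplus b$, contradicting the hypothesis.

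For the first conclusion, suppose $a$ is \emph{not} ITRM-c-random, so there is an ITRM-decidable meager set $X$ with $a\in X$, decided by some program $P$. I would then consider $\tilde{X}:=\{x\oplus y\mid x\in X\}= X\oplus[0,1]$. This set is ITRM-decidable, since from the oracle $x\oplus y$ one can extract $x$ (the even part) and run $P$ on it. By the second standard proposition quoted before the section ($X$ meager implies $X\oplus[0,1]$ meager), $\tilde{X}$ is meager. Since $a\in X$, we have $a\oplus b\in\tilde{X}$, so $a\oplus b$ lies in an ITRM-decidable meager set, contradicting its c-randomness.

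For the second conclusion, suppose $b$ is \emph{not} ITRM-c-random relative to $a$, so there is a set $S$ that is meager and ITRM-decidable in the oracle $a$ with $b\in S$; let $Q$ decide $S$ in the oracle $a$, meaning $S=\{y\mid Q^{y\oplus a}\downarrow=1\}$. The natural candidate for a decidable meager set containing $a\oplus b$ is $X:=\{x\oplus y\mid Q^{y\oplus x}\downarrow=1\}$, which is plainly ITRM-decidable (unrelativize by reading the oracle off the input). The difficulty is meagerness: I only know $\{y\mid Q^{y\oplus a}\downarrow=1\}$ is meager for the \emph{specific} parameter $a$, not that the full set $X$ in the product space is meager. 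To bridge this I would invoke Corollary \ref{randomandcohengeneric}: since $a\oplus b$ is c-random, it is Cohen-generic over $L_{\omega_{\omega}^{\text{CK}}}$, whence by the first standard proposition $a$ and $b$ are (jointly) generic and in particular $b$ is Cohen-generic over $L_{\omega_{\omega}^{\text{CK},a}}[a]$; by Corollary \ref{randomandcohengeneric} again, $b$ is then ITRM-c-random relative to $a$, directly contradicting $b\in S$. This reformulation in terms of genericity, rather than a direct Kuratowski--Ulam argument on $X$, is the cleaner route and sidesteps the product-meagerness issue.

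The main obstacle is precisely this relativized step: making rigorous that c-randomness of $a\oplus b$ yields relative c-randomness of $b$ over $a$, since the admissible ordinals and the constructible level over $a$ must be controlled. The key enabling facts are Corollary \ref{Admpreserv} (c-randomness preserves admissibles, so $\omega_{\omega}^{\text{CK},a}$ behaves well once $a$ is itself generic) and Corollary \ref{randomandcohengeneric}, which converts the measure-theoretic/category notion of relative c-randomness into the forcing-theoretic notion of Cohen-genericity over the appropriate model, where the splitting of a joint generic into its coordinates is standard.
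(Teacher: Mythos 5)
Your argument is correct, and the first half (pushing a meager decidable set $X\ni a$ to $X\oplus[0,1]\ni a\oplus b$) is exactly the paper's. For the second half, however, you take a genuinely different route. The paper stays entirely inside the computational framework: it uses Corollary \ref{decidemeasure} to build a modified program $\tilde{Q}$ that first tests whether the section $\{w\mid Q^{y\oplus w}\downarrow=1\}$ is meager and outputs $0$ if not, so that $\{x\mid\tilde{Q}^{x}\downarrow=1\}$ is a decidable set with all fibers meager (hence meager) that still contains $a\oplus b$. You instead translate everything into forcing via Corollary \ref{randomandcohengeneric}: $a\oplus b$ c-random gives $a\oplus b$ Cohen-generic over $L_{\omega_{\omega}^{\text{CK}}}$, the product lemma for Cohen forcing splits this into $a$ generic over $L_{\omega_{\omega}^{\text{CK}}}$ and $b$ generic over $L_{\omega_{\omega}^{\text{CK}}}[a]$, and Corollary \ref{Admpreserv} identifies $L_{\omega_{\omega}^{\text{CK}}}[a]$ with $L_{\omega_{\omega}^{\text{CK},a}}[a]$ so that Corollary \ref{randomandcohengeneric} converts back to relative c-randomness of $b$, contradicting $b\in S$. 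This is sound and arguably shorter, since Corollary \ref{randomandcohengeneric} precedes Theorem \ref{Dir2} and is proved independently of it. Two caveats: the splitting of a joint generic is not given by the ``first standard proposition'' you cite (that proposition only transfers meagerness between the product space and its $\oplus$-image); what you actually need is the product lemma for Cohen forcing over a provident set, which is standard and is the same mutual-genericity machinery the paper invokes in Lemma \ref{compandgen} via Mathias and \cite{CaSc}, but it should be cited as such. Also, note that the paper's remark after Theorem \ref{vLamb} presents precisely this forcing decomposition as a \emph{consequence} of the computational proof; your route inverts that dependency, which is legitimate but trades the paper's self-contained, classically-styled argument (which would survive in settings lacking a clean genericity characterization) for reliance on the forcing product lemma.
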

\begin{proof}
 Assume first that $a$ is not ITRM-c-random, and let $X$ be an ITRM-decidable meager set with $a\in X$.
Then $X\oplus[0,1]$ is also meager and ITRM-decidable. As $a\in X$, we have $a\oplus b\in X\oplus[0,1]$, so
$a\oplus b$ is not ITRM-c-random, a contradiction.

Now suppose that $b$ is not ITRM-c-random relative to $a$, and let $X$ be a meager set of reals such that $b\in X$ and $X$ is ITRM-decidable in the oracle $a$.
Let $Q$ be an ITRM-program such that $Q^{a}$ decides $X$. Our goal is to define a deciding program $\tilde{Q}$ such that $\tilde{Q}^{a}$ still decides $X$, but
also $\{x|\tilde{Q}^{x}\downarrow=1\}$ is meager. This suffices, as then $\tilde{Q}^{a\oplus b}\downarrow=1$ and $\{x|\tilde{Q}^{x}\downarrow=1\}$ is ITRM-decidable, so that $a\oplus b$ cannot be ITRM-c-random.
$\tilde{Q}$ operates as follows: Given $x=y\oplus z$, check whether $\{w\mid Q^{y\oplus w}\downarrow=1\}$ is meager, using Corollary \ref{decidemeasure}. If that is the case,
carry out the computation of $Q^{x}$ and return the result. Otherwise, return $0$. This guarantees (since $X$ is meager) 
that $\{z\mid \tilde{Q}^{y\oplus z}\downarrow=1\}$ is meager and furthermore that $\tilde{Q}^{a\oplus z}\downarrow=1$ if and only if $Q^{a\oplus z}\downarrow=1$ if and only if
$z\in X$ for all reals $z$, so that $\{z|\tilde{Q}^{a\oplus z}\downarrow=1\}$ is just $X$, as desired.
\end{proof}

Combining Theorem \ref{Dir1} and \ref{Dir2} gives us the desired conclusion:

\begin{thm}{\label{vLamb}}
 Given reals $x$ and $y$, $x\oplus y$ is ITRM-c-random if and only if $x$ is ITRM-c-random and $y$ is ITRM-c-random relative to $x$.
In particular, if $x$ and $y$ are ITRM-c-random, then $x$ is ITRM-c-random relative to $y$ if and only if $y$ is ITRM-c-random relative to $x$.
\end{thm}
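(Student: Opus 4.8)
The plan is to derive the main biconditional purely by combining Theorems \ref{Dir1} and \ref{Dir2}, and then to deduce the ``in particular'' clause from the symmetry of $\oplus$ together with the biconditional just established.

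For the forward implication of the main statement, suppose $x\oplus y$ is ITRM-c-random. Then Theorem \ref{Dir2}, applied with $a=x$ and $b=y$, immediately gives that $x$ is ITRM-c-random and $y$ is ITRM-c-random relative to $x$. For the converse I would argue by contraposition: if $x\oplus y$ is \emph{not} ITRM-c-random, then Theorem \ref{Dir1} (with $a=x$, $b=y$) yields that $x$ is not ITRM-c-random or $y$ is not ITRM-c-random relative to $x$; hence if $x$ is ITRM-c-random and $y$ is ITRM-c-random relative to $x$, then $x\oplus y$ must be ITRM-c-random. Together these two implications establish the biconditional.

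For the ``in particular'' clause, assume that $x$ and $y$ are both ITRM-c-random. The one ingredient I would isolate first is the symmetry fact that $x\oplus y$ is ITRM-c-random if and only if $y\oplus x$ is ITRM-c-random: the coordinate swap $x\oplus y\mapsto y\oplus x$ is induced by a fixed recursive permutation of $\omega$, which carries ITRM-decidable sets to ITRM-decidable sets and meager sets to meager sets, so any ITRM-test containing one of $x\oplus y$, $y\oplus x$ can be converted into one containing the other. Granting this, I would chain the biconditional: since $y$ is already ITRM-c-random, the main statement gives that $x$ is ITRM-c-random relative to $y$ if and only if $y\oplus x$ is ITRM-c-random, which by symmetry holds if and only if $x\oplus y$ is ITRM-c-random, which (since $x$ is already ITRM-c-random) holds if and only if $y$ is ITRM-c-random relative to $x$. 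This is exactly the asserted equivalence.

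The proof is essentially bookkeeping, and no serious obstacle remains once Theorems \ref{Dir1} and \ref{Dir2} are in hand. The only point requiring a moment's care is the symmetry of $\oplus$ under ITRM-c-randomness used in the last paragraph, and even this reduces to the routine observation that relabelling the coordinates of a real by a recursive bijection preserves both ITRM-decidability and category.
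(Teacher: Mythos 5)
Your proposal is correct and follows essentially the same route as the paper: the main biconditional is obtained exactly by combining Theorem \ref{Dir1} (contrapositively) with Theorem \ref{Dir2}, and the ``in particular'' clause is derived, just as in the paper's subsequent corollary, by applying the biconditional twice together with the observation that $x\oplus y$ is ITRM-c-random if and only if $y\oplus x$ is. Your explicit justification of that symmetry via a recursive coordinate permutation preserving ITRM-decidability and category is a reasonable expansion of what the paper dismisses as ``trivial.''
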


\textbf{Remark}: Combined with Corollary \ref{randomandcohengeneric}, this shows `computationally' that, over $L_{\omega_{\omega}^{\text{CK}}}$, $x\oplus y$ is Cohen-generic if and only if $x$ is Cohen-generic
over $L_{\omega_{\omega}^{\text{CK}}}$ and $y$ is Cohen-generic over $L_{\omega_{\omega}^{\text{CK}}}[x]$.


We note that a classical corollary to van Lambalgen's theorem continues to hold in our setting:

\begin{corollary}
 Let $x,y$ be ITRM-c-random. Then $x$ is ITRM-c-random relative to $y$ if and only if $y$ is ITRM-c-random relative to $x$.
\end{corollary}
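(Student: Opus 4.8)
The plan is to read this off from the van Lambalgen analogue, Theorem \ref{vLamb}, which is exactly why it is recorded as a corollary; its content is in fact already present as the ``in particular'' clause there. The one auxiliary fact I would isolate first is that ITRM-c-randomness does not see the order of the two halves of a join, i.e.\ that $x\oplus y$ is ITRM-c-random if and only if $y\oplus x$ is.

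To justify this symmetry, note that the swap on joins is implemented by the ITRM-computable map $p(i,j)\mapsto p(j,i)$ on $\omega$: from a code of $a\oplus b$ one can ITRM-computably decode $a$ and $b$ and recode them as $b\oplus a$. Consequently, if $W$ is an ITRM-decidable set then so is $W':=\{\,b\oplus a\mid a\oplus b\in W\,\}$, since membership of a real $w$ in $W'$ can be tested by decoding $w$, forming the swapped join, and running the decision procedure for $W$. Furthermore $W'$ is the join-coding of the image of $W$ under the coordinate interchange on $[0,1]\times[0,1]$, which is a homeomorphism and hence preserves category; by the first Proposition recalled above, meagerness of $W$ thus transfers to $W'$. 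Hence $W\mapsto W'$ carries ITRM-decidable meager sets to ITRM-decidable meager sets, and since $x\oplus y\in W$ forces $y\oplus x\in W'$, the real $x\oplus y$ lies in an ITRM-decidable meager set if and only if $y\oplus x$ does.

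With this in hand the corollary is immediate. Suppose $x$ and $y$ are both ITRM-c-random and that $x$ is ITRM-c-random relative to $y$. Since $y$ is ITRM-c-random and $x$ is ITRM-c-random relative to $y$, Theorem \ref{vLamb} applied to the join $y\oplus x$ yields that $y\oplus x$ is ITRM-c-random, whence $x\oplus y$ is ITRM-c-random by the symmetry just established. Applying Theorem \ref{vLamb} once more, this time to $x\oplus y$, forces $y$ to be ITRM-c-random relative to $x$. This gives one direction, and the other follows by exchanging the roles of $x$ and $y$ throughout. The argument is purely formal, so I do not expect a genuine obstacle; the only step requiring any care is the verification that the coordinate swap is ITRM-computable and category-preserving, which is precisely what lets me move ITRM-decidability and meagerness between $x\oplus y$ and $y\oplus x$.
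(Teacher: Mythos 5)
Your proof is correct and follows essentially the same route as the paper: apply Theorem \ref{vLamb} in one direction to get that the join is ITRM-c-random, swap the order of the join, and apply Theorem \ref{vLamb} again in the other direction. The only difference is that you carefully justify the swap symmetry (that $x\oplus y$ is ITRM-c-random if and only if $y\oplus x$ is), a step the paper simply labels ``trivially''; your justification via the ITRM-computable, category-preserving coordinate interchange is sound.
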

\begin{proof}
 Assume that $y$ is ITRM-c-random relative to $x$. By assumption, $x$ is ITRM-c-random. By Theorem \ref{vLamb}, $x\oplus y$ is ITRM-c-random. Trivially, $y\oplus x$ is also ITRM-c-random.
Again by Theorem \ref{vLamb}, $x$ is ITRM-c-random relative to $y$. By symmetry, the corollary holds.
\end{proof}

Based on Theorem \ref{weakrandomreducibility}, one might conjecture that a real $x$ which is ITRM-computable from an ITRM-c-random real $y$ must be computable or itself ITRM-c-random.
This, however does not hold:


\begin{thm}{\label{cntblrnd}}
For every ITRM-c-random real $y$, there is a real $x$ such that $x$ is neither ITRM-c-random nor ITRM-computable and $x$ is ITRM-computable from $y$. 
However, if $x$ is ITRM-computable from
an ITRM-c-random real $y$ and $x$ is an element of an ITRM-decidable set $S$ such that $\text{card}(S)=\aleph_{0}$, then $x$ is computable.
\end{thm}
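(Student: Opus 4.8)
The theorem has two parts. For the first part, I need to produce, for any ITRM-c-random $y$, a real $x$ that is computable from $y$ but neither computable nor ITRM-c-random. By Corollary \ref{randomandcohengeneric}, being ITRM-c-random is equivalent to being Cohen-generic over $L_{\omega_\omega^{\mathrm{CK}}}$. So the plan is to extract from $y$, in an ITRM-computable way, some infinite-and-co-infinite "thinning" or slice of $y$ that destroys genericity while retaining non-computability. A natural candidate is to take $x$ to be $y$ with some ITRM-decidable pattern forced into it — for instance, pad $y$ by inserting $0$'s at a recursive infinite set of positions, producing a real $x$ that agrees with $y$ on a recursive set of coordinates but is identically $0$ elsewhere. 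Such an $x$ is trivially computable from $y$ (just run the recursive un-padding and copy bits). It fails to be Cohen-generic because it avoids the dense set of conditions that put a $1$ somewhere in the forced-zero region, and this dense set lies in $L_{\omega_\omega^{\mathrm{CK}}}$; hence $x$ is not ITRM-c-random by Corollary \ref{randomandcohengeneric}. The remaining point is that $x$ is not ITRM-computable: if it were, then since $y$ is recoverable from $x$ on the recursive coordinate set, $y$ would differ from a computable real only by its values on a recursive set reconstructable from $x$ — I would argue that computability of $x$ together with the recursive retrieval would force $y$ itself to be computable, contradicting that $y$ is c-random and hence (being in no ITRM-decidable meager set, in particular $\mathrm{COMP}$ is meager) non-computable. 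This direction I expect to be routine once the padding is set up carefully.

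For the second part, assume $x \leq_{\mathrm{ITRM}} y$ with $y$ ITRM-c-random, and $x \in S$ where $S$ is ITRM-decidable and countable. The goal is to show $x$ is computable. The guiding idea is to mimic the proof of Theorem \ref{weakrandomreducibility}: there, recognizability of $x$ (i.e. $\{x\}$ decidable) let us build a decidable meager set containing $y$. Here I only have that $x$ lives in a countable decidable set, which is weaker than $\{x\}$ being decidable, so I cannot directly recognize $x$. The plan is to enumerate $S$. Since $S$ is ITRM-decidable and countable, and countable sets of reals are meager, I would first like to pin down $x$ inside $S$ effectively.

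The key step is the following. Let $P$ witness $x \leq_{\mathrm{ITRM}} y$, so $P^y\downarrow = x$, and consider the set $T := \{ z \mid P^z\downarrow \in S \text{ and } P^z\downarrow = P^y\downarrow \}$ — more usefully, for each $s \in S$ define $T_s := \{ z \mid P^z\downarrow = s\}$. Each $T_s$ is ITRM-decidable in the oracle $s$ (compute $P^z$ bit by bit, compare to $s$), hence ITRM-decidable outright when $s$ is computable, and $y \in T_{x}$. If $x$ were non-computable, then by Lemma \ref{compandgen} (applied as in Theorem \ref{mutuallyrandom}, since $T_x \subseteq \{z \mid x \leq_{\mathrm{ITRM}} z\}$) the set $T_x$ is meager. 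The obstacle is that I want a single ITRM-decidable meager set capturing $y$ without already knowing $x$; the decidability of $T_x$ seems to require $x$ (or $s=x$) as an oracle. The resolution I propose is to use that $S$ is countable and decidable to search: uniformly in the oracle $y$, I can enumerate candidates $s \in S$ (using a decision procedure for $S$ together with a well-ordering of reals in the relevant $L$-level, as in Corollary \ref{decidemeasure}'s use of canonical codes), and for the correct one $s = x = P^y$, verify $P^y\downarrow = s$. The decisive point is to show that the union $\bigcup_{s \in S} T_s$, restricted appropriately, or rather the set $\{z \mid P^z\downarrow \in S \text{ and } \{w : P^w\downarrow = P^z\downarrow\} \text{ is meager}\}$, is itself ITRM-decidable (via Corollary \ref{decidemeasure}) and meager (as a subset of $\{z : P^z\downarrow \in S\}$ that is covered by countably many meager $T_s$), yet contains $y$ if $x=P^y$ is non-computable — contradicting the c-randomness of $y$. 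Hence $x$ must be computable. The main obstacle, and the heart of the argument, is precisely this uniformization: replacing the per-oracle decidability of $T_x$ by a single oracle-free decidable meager set into which $y$ falls, and here I expect to lean on Corollary \ref{decidemeasure} to decide meagerness of the fibers and on the countability of $S$ to guarantee the overall set stays meager.
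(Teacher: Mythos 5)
Your proposal is correct and follows essentially the same route as the paper: the first part uses the padding $x=0\oplus y$ (non-c-random because it lies in the decidable meager set $\{0\}\oplus[0,1]$, equivalently because it misses a dense set in $L_{\omega_{\omega}^{\text{CK}}}$), and for the second part the set you single out, $\{z \mid P^{z}\downarrow\in S \wedge \{w\mid P^{w}\downarrow=P^{z}\downarrow\}\text{ meager}\}$, is exactly the paper's set $M=\{z\mid Q(z)\in\bar{S}\}$, decidable by Corollary \ref{decidemeasure} and meager as a countable union of meager fibers, contradicting the c-randomness of $y$.
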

\begin{proof}
 Let $x:=0\oplus y$, then $x$ is clearly computable from $y$. However, $x$ is not computable as the non-computable $y$ is computable from $x$.
Furthermore, $x$ is contained in the ITRM-decidable meager set $\{0\}\oplus[0,1]$, so $x$ cannot be ITRM-c-random.

For the second statement, assume for a contradiction that $S$ is an ITRM-decidable countable set containing a non-computable real $x$ such that $x\leq_{\text{ITRM}}y$ for some ITRM-c-random real $y$.
Let $P$ be a program deciding $S$ and let $Q$ be a program such that $Q^{y}\downarrow=x$. By Theorem \ref{hp}, we may assume without loss of generality that $Q^{z}$ computes some real for every $z\subseteq\omega$.
By $\bar{S}$, we denote the set of $z\in S$ such that $M_{a}:=\{a|Q^{a}\downarrow=z\}$ is meager, $Q(z)$ denotes the real number computed by $Q^{z}$. Then, as $x$ is not ITRM-computable, we have $x\in\bar{S}$ by Theorem \ref{ManyOracles}. Furthermore
$\bar{S}$ is decidable: For let $R$ be a program such that $R^{a\oplus b}\downarrow=1$ if and only if $Q^{a}\downarrow=b$ and $R^{a\oplus b}\downarrow=0$ otherwise; so $R$ is deciding.
Now $a\in\bar{S}$ if and only if $a\in S$ and $\{b|R^{a\oplus b}\}$ is meager, hence, by Theorem \ref{decidemeasure}, $\bar{S}$ is decidable by some program $\bar{P}$.
Let $M:=\{z|\bar{P}^{Q(z)}\downarrow=1\}$ be the set of all reals $z$ such that $Q(z)\in\bar{S}$. Then $M$ is obviously ITRM-decidable.
As $Q(y)=x\in\bar{S}$, we have $y\in M$. As $y$ is ITRM-c-random, it follows that $M$ is not meager. But $M=\bigcup_{a\in S}M_{a}$; as
$S$ is countable, there must thus be some $a\in S$ such that $M_{a}$ is not meager, a contradiction.
\end{proof}

Note that the condition of being ITRM-computable from an ITRM-c-random real is necessary in the assumption of Theorem \ref{cntblrnd}:
It is not true that every element of a countable ITRM-decidable set is ITRM-computable; in fact, there are countable ITRM-decidable sets that do not contain any ITRM-computable element:

%

\begin{prop}
The set $X:=\{x\subseteq\omega:\exists\alpha\in\text{On}(\omega_{\omega}^{\text{CK}}<\alpha<\omega_{2\omega}^{\text{CK}}\wedge`x$ is the $<_{L}$-minimal real in $L_{\alpha+1}\setminus L_{\alpha}$ coding $L_{\alpha}$'$)\}$ 
is countable, ITRM-decidable and contains no ITRM-computable element.
\end{prop}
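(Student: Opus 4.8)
The plan is to dispatch countability and the absence of computable elements quickly, and then to spend the bulk of the work on decidability, which is the only substantial claim.

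For countability, I would note that the map sending $x\in X$ to the ordinal $\alpha$ witnessing its membership is well-defined (the $<_{L}$-minimal code is determined by, and determines, $\alpha$) and injective, so $X$ injects into the interval of ordinals $(\omega_{\omega}^{\text{CK}},\omega_{2\omega}^{\text{CK}})$, which is countable; hence $X$ is countable. For the absence of ITRM-computable elements, I would invoke Theorem \ref{relITRM} with $y=\emptyset$: the ITRM-computable reals are exactly those in $L_{\omega_{\omega}^{\text{CK}}}$. But any $x\in X$ lies in $L_{\alpha+1}\setminus L_{\alpha}$ for some $\alpha>\omega_{\omega}^{\text{CK}}$, so $x\notin L_{\alpha}$, and since $\omega_{\omega}^{\text{CK}}<\alpha$ gives $L_{\omega_{\omega}^{\text{CK}}}\subseteq L_{\alpha}$, we get $x\notin L_{\omega_{\omega}^{\text{CK}}}$; thus $x$ is not ITRM-computable.

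The heart of the matter is decidability, and here I would build a program that, on oracle $z$, proceeds in three stages. First, decide whether $z$ codes a well-founded extensional structure whose transitive collapse is a constructible level $L_{\alpha}$: well-foundedness is $\Pi^{1}_{1}$ and hence ITRM-decidable (as recalled in the introduction), and once the collapse is known to be well-founded one can verify ``$V=L$'' together with enough of $KP$ using the recursive truth-evaluation for coded $\in$-structures from section $6$ of \cite{ITRM}, which pins the collapse down as $L_{\alpha}$ with $\alpha$ its ordinal height; if any of these checks fails, reject. Second, locate $\alpha$ by computing, inside the coded structure, the order type of its admissible ordinals, i.e.\ of the set of $\beta<\alpha$ with $L_{\beta}\models KP$ (admissibility being absolute and evaluable from the code), and accept the range condition precisely when this order type places $\alpha$ strictly between the $\omega$-th and the $2\omega$-th admissible, i.e.\ when $\omega_{\omega}^{\text{CK}}<\alpha<\omega_{2\omega}^{\text{CK}}$. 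Third, compute from $z$ a code for $L_{\alpha+1}$ (the definable power set of $L_{\alpha}$ is one further ITRM-computable step, using extra registers), enumerate the reals of $L_{\alpha+1}$ in the $<_{L}$-ordering, and search for the $<_{L}$-least real coding $L_{\alpha}$ (testing a candidate $w$ by checking that it codes a well-founded model of the same ordinal height, isomorphic to the one coded by $z$); accept iff this least code is $z$ itself. This last stage simultaneously certifies $z\in L_{\alpha+1}\setminus L_{\alpha}$ and the $<_{L}$-minimality.

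I expect the main obstacle to be termination uniformly over \emph{all} oracles $z$, not merely those in $X$. The decisive point is that whenever $z$ does code a well-founded $L_{\alpha}$, the ordinal $\alpha$ is the order type of a $z$-computable well-ordering, so $\alpha<\omega_{1}^{\text{CK},z}$; consequently the construction of $L_{\alpha+1}$, the $<_{L}$-search through its (countably many) reals, and the admissibility tests all take place well below $\omega_{\omega}^{\text{CK},z}$, whence by the halting-time bound of Theorem \ref{hp} the program halts. Ill-founded or non-constructible $z$ are rejected already at the decidable first stage, and an out-of-range $\alpha$ is detected by capping the admissible count at $2\omega$, making the second stage a bounded search; this is exactly why the upper bound $\omega_{2\omega}^{\text{CK}}$ is chosen as a cleanly checkable admissible cutoff (dropping it would also render $X$ uncountable). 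The remaining delicate ingredient, that a code for $L_{\alpha+1}$ together with its $<_{L}$-ordering on reals is genuinely ITRM-computable from $z$, is supplied by the coding-and-evaluation techniques of \cite{Ca} and \cite{Ca2} already used in Corollary \ref{Admpreserv}.
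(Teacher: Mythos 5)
Your proof is correct and follows essentially the same route as the paper: countability and the absence of ITRM-computable elements are dispatched exactly as there (via Theorem \ref{relITRM}), and your decidability argument --- check that the oracle codes a well-founded $L$-level, locate $\alpha$ by counting admissibles, compute a code for $L_{\alpha+1}$, and read off $<_{L}$-minimality --- is the paper's argument with the termination bookkeeping made explicit. One cosmetic caution: most $\alpha$ in the interval $(\omega_{\omega}^{\text{CK}},\omega_{2\omega}^{\text{CK}})$ are not admissible, so the first stage must verify the first-order characterization of $L$-levels rather than any serious fragment of KP, but this is exactly what the techniques of \cite{ITRM2} cited in the paper provide, so nothing is lost.
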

\begin{proof}
As $\omega_{2\omega}^{\text{CK}}$ is countable, the set of indices between $\omega_{\omega}^{\text{CK}}$ and $\omega_{2\omega}^{\text{CK}}$ is countable and hence so is the set of the corresponding codes for
$L$-levels. Also, by Theorem \ref{relITRM}, all ITRM-computable reals are contained in $L_{\omega_{\omega}^{\text{CK}}}$, so $X$ has no ITRM-computable element.

To see that $X$ is ITRM-decidable, we apply the techniques from section $6$ of \cite{ITRM2} to check, for a given oracle $x$, whether $x$ codes an $L$-level $L_{\alpha}$ and whether this $L_{\alpha}$ contains exactly one limit of admissible ordinals.
It is also shown in \cite{ITRM2} how to uniformly compute a code $c$ for $L_{\alpha+1}$ from $x$ when $x$ codes $L_{\alpha}$. 
Hence, it only remains to check whether $L_{\alpha+1}\setminus L_{\alpha}$ contains $x$ and whether $x$ is $<_{L}$-minimal in $L_{\alpha+1}$
with these properties, and the answers to both questions can be easily read off from $c$.
\end{proof}

\textbf{Remark}: We do not know whether a countable ITRM-decidable set $X$ can contain a non-ITRM-recognizable element. Our conjecture is that this is not possible and that in fact
any ITRM-decidable set with a non-ITRM-recognizable element must have a perfect subset (and hence have cardinality $2^{\aleph_{0}}$).
The next result, however, shows that every non-empty ITRM-decidable set (countable or not) contains \textbf{some} recognizable element. This can be seen
as a kind of basis theorem for ITRMs:

\begin{thm}{\label{ITRMbase}}
Every non-empty ITRM-decidable set of real numbers has a recognizable element.
\end{thm}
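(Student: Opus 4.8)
The plan is to single out a canonical element of $X$ and prove that it is recognizable. First I would note that, exactly as in the proof of Theorem \ref{Dir1}, an ITRM-decidable set is provably $\Delta_2^1$, so nonemptiness of $X$ is a $\Sigma_2^1$ statement; by Shoenfield absoluteness it reflects to $L$, giving $X\cap L\neq\emptyset$. Let $x_0$ be the $<_L$-least element of $X$ and let $\gamma_0$ be its constructibility rank, i.e. the least ordinal with $x_0\in L_{\gamma_0+1}$. The target is a program $R$ with $R^z\downarrow=1$ iff $z=x_0$, which witnesses $x_0\in\mathrm{RECOG}\cap X$.

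On oracle $z$ the program $R$ would proceed as follows, where $P$ is the deciding program for $X$ and $n$ is its number of registers. Using a halting-problem solver for $P$ (Theorem \ref{hp}), first test whether $P^z\downarrow=1$; if not, reject. Otherwise, using the techniques of section $6$ of \cite{ITRM2} already employed in Corollary \ref{decidemeasure}, build codes for the lightface levels $L_\beta$ with $\beta<\omega_\omega^{\text{CK},z}$ and detect their admissibility. Then search for the least $\lambda<\omega_\omega^{\text{CK},z}$ such that $L_\lambda$ is $(n+1)$-admissible and $z\in L_\lambda$; inside $L_\lambda$ compute the $<_L$-least real $w$ with $L_\lambda\models P^w\downarrow=1$, and accept iff $w=z$. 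If no such $\lambda$ is found below $\omega_\omega^{\text{CK},z}$, reject. Since the search is bounded by $\omega_\omega^{\text{CK},z}$, the program $R$ halts on every oracle.

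Correctness on the accepting side rests on absoluteness of halting computations: by Theorem \ref{hp} the computation $P^w$ terminates in fewer than $\omega_{n+1}^{\text{CK},w}$ steps, so whenever $w\in L_\lambda$ and $L_\lambda$ is $(n+1)$-admissible the output of $P^w$ is computed correctly inside $L_\lambda$. Hence, once a suitable $L_\lambda$ is found, the $w$ produced is the genuine $<_L$-least element of $X$ lying in $L_\lambda$. Any $z$ reaching the second stage lies in $X$, whence $x_0\leq_L z$ and therefore $x_0\in L_\lambda$; thus $w=x_0$, and $R^z\downarrow=1$ exactly when $z=x_0$. Reals $z\notin X$ are rejected at the first stage, and elements $z\in X$ with $z\neq x_0$ are rejected either because the computed $w=x_0\neq z$ or because $z$ is not constructible (or appears too late) and no admissible $L_\lambda\ni z$ is found below $\omega_\omega^{\text{CK},z}$.

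The hard part is termination of the search in the one case that matters, namely $z=x_0$: I must show that the required $L_\lambda$ lies \emph{strictly} below $\omega_\omega^{\text{CK},x_0}$. The useful reduction is that $x_0\in L_{\gamma_0+1}$ forces $L_\beta[x_0]=L_\beta$ for every $\beta>\gamma_0$, so the $x_0$-admissibles above $\gamma_0$ coincide with the lightface admissibles above $\gamma_0$. Consequently the $(n+1)$-st admissible above $\gamma_0$ is the $(k+n+1)$-st $x_0$-admissible, where $k$ is the number of $x_0$-admissibles $\leq\gamma_0$, and this ordinal is below $\omega_\omega^{\text{CK},x_0}$ precisely when $k$ is finite, i.e. when $\gamma_0<\omega_\omega^{\text{CK},x_0}$. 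The main obstacle is thus to establish $\gamma_0<\omega_\omega^{\text{CK},x_0}$ for the $<_L$-least element — that the admissibles of $x_0$ do not accumulate before $x_0$ is constructed. I would attack this by a reflection and fine-structure argument exploiting the minimality of $x_0$: the $\Sigma_1$-over-$L$ property defining $x_0$ (``$<_L$-least real carrying a halting computation of $P$ with output $1$'') must already be witnessed, together with its witnessing computation, at a level that $x_0$ itself can recover, forcing $x_0$ to appear early relative to its own admissibles; the techniques of \cite{Ca} and \cite{Ca2} on recognizability should supply the required control.
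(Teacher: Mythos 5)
Your overall architecture --- isolate a canonical constructible element of $X$ and recognize it by locating an admissible level that contains it and checking $<_L$-minimality there --- is close to the paper's, but you have a genuine gap at exactly the step you yourself flag as ``the hard part'': showing that your canonical element appears in $L$ strictly before $\omega_\omega^{\text{CK},x_0}$. You take $x_0$ to be the globally $<_L$-least element of $X\cap L$ (via Shoenfield) and then need $\gamma_0<\omega_\omega^{\text{CK},x_0}$, but you only gesture at ``a reflection and fine-structure argument''; the $<_L$-minimality of $x_0$ by itself gives no obvious control over where $x_0$ sits relative to its own admissibles, and nothing in \cite{Ca} or \cite{Ca2} supplies this for free. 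The paper organizes the choice of witness precisely so as to make this step automatic: it first minimizes $\omega_\omega^{\text{CK},x}$ over $x\in X$, obtaining $\mu=\min\{\omega_\omega^{\text{CK},x}:x\in X\}$, uses the bound on halting times plus the Jensen--Karp theorem to reflect the $\Sigma_1$-statement $\exists x\,P^{x}\downarrow=1$ from $L_\mu[x_\gamma]$ down to $L_\mu$, and takes $\bar x$ to be the $<_L$-least witness \emph{inside $L_\mu$}. Since $\bar x\in X$ forces $\mu\leq\omega_\omega^{\text{CK},\bar x}$, one gets $\bar x\in L_\mu\subseteq L_{\omega_\omega^{\text{CK},\bar x}}$ with no further work. (Your $x_0$ does in fact satisfy $\gamma_0<\omega_\omega^{\text{CK},x_0}$, since $x_0\leq_L\bar x$ implies $x_0$ appears no later than $\bar x$ and hence $x_0\in L_\mu\subseteq L_{\omega_\omega^{\text{CK},x_0}}$ --- but the only proof of this I see routes through the paper's construction, so the detour through $\mu$ cannot be omitted.)

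A secondary problem lies in your verification phase: you accept $z$ iff the $<_L$-least $w\in L_\lambda$ with $L_\lambda\models P^{w}\downarrow=1$ equals $z$. Downward absoluteness guarantees that any such $w$ really belongs to $X$, but for rejection of other elements of $X$ you also need $L_\lambda$ to \emph{see} that $P^{x_0}\downarrow=1$ whenever $x_0\in L_\lambda$; if $\lambda$ is not large enough relative to the $x_0$-admissibles bounding the halting time of $P^{x_0}$, the internal search can return some $w\neq x_0$ and wrongly accept a different element of $X$. Your observation that $L_\beta[x_0]=L_\beta$ for $\beta>\gamma_0$ handles the case $z=x_0$ but not this case. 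The paper sidesteps the issue by extracting each candidate $z<_L y$ from the computed code for $L_\alpha$ and actually running $P^{z}$ with a halting-problem solver, rather than evaluating $P^{z}\downarrow=1$ inside the coded structure.
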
 
\begin{proof}
 Let $X\neq\emptyset$ be ITRM-decidable, and let $P$ be a program that decides $X$. Let $\Gamma:=\{\omega_{\omega}^{\text{CK},x}:x\in X\}$. Since $X$ is non-empty, so is $\Gamma$; let $\mu$ be the minimal element of $\Gamma$ and pick
$x_{\gamma}\in X$ such that $\omega_{\omega}^{\text{CK},x_{\gamma}}=\mu$. By Theorem \ref{hp}, we thus have $P^{x_{\gamma}}\downarrow=1$ in less than $\mu$ many steps. By absoluteness of computations, it follows that
$L_{\mu}[x_{\gamma}]\models P^{x_{\gamma}}\downarrow=1$ and hence $L_{\mu}[x_{\gamma}]\models\exists{x}P^{x}\downarrow=1$. By the Jensen-Karp theorem (see \cite{JeKa}), the $\Sigma_1$-formula $\exists{x}P^{x}\downarrow=1$ is absolute between
$L_{\mu}[x_{\gamma}]$ and $L_{\mu}$; hence $L_{\mu}\models\exists{x}P^{x}\downarrow=1$. Let $\bar{x}\in L_{\mu}$ be the $<_{L}$-minimal element of $L_{\mu}$ such that $L_{\mu}\models P^{\bar{x}}\downarrow=1$; thus $\bar{x}\in X$
since $P^{\bar{x}}\downarrow=1$ by absoluteness of computations.

We claim that $\bar{x}$ is recognizable. To see this, first note that $\bar{x}\in L_{\omega_{\omega}^{\text{CK},\bar{x}}}$, so that there is $k\in\omega$ with $\bar{x}\in L_{\omega_{k}^{\text{CK},\bar{x}}}$.
Suppose that $P$ uses $n$ registers and let $m=\text{max}\{k,n+1\}$. Then the $\Sigma_1$-hull of $\{\bar{x}\}$ in $L_{\omega_{m}^{\text{CK},\bar{x}}}$ will be isomorphic (via the collapsing map) to $L_{\omega_{m}^{\text{CK},\bar{x}}}$,
and so $L_{\omega_{m}^{\text{CK},\bar{x}}+1}$ will contain a real number $r$ coding $L_{\omega_{m}^{\text{CK},\bar{x}}}$ by standard fine-structural arguments. Hence $r$ is ITRM-computable from $\bar{x}$; let $Q$ be an ITRM-program computing $r$ from $\bar{x}$.
To recognize $\bar{x}$, we now proceed as follows: Let $y$ be given in the oracle. First test whether $P^{y}\downarrow=1$. If not, then $y\neq\bar{x}$. Otherwise, test, 
using $Q$ and a halting problem solver for $Q$, whether $Q^{y}$ computes a real $r$ coding an admissible $L_{\alpha}$ containing $y$.
If not, then $y\neq\bar{x}$. If yes, then use $r$ to search for an element $z$ of $L_{\alpha}$ that is $<_{L}$-smaller than $y$ and satisfies $P^{z}\downarrow=1$. If the search is successful, then $y\neq\bar{x}$. 
On the other hand, if none is found, then $y$ is the $<_{L}$-smallest element $a$ of $X$
with $a\in L_{\omega_{\omega}^{\text{CK},a}}$, so $y=\bar{x}$.
\end{proof}

\section{Some consequences for the structure of ITRM-degrees}

In the new setting, we can also draw some standard consequences of van Lambalgen's theorem. 

\begin{defini}
If $x\leq_{\text{ITRM}}y$ but not $y\leq_{\text{ITRM}} x$, we write $x<_{\text{ITRM}}y$. If $x\leq_{\text{ITRM}}$ and $y\leq_{\text{ITRM}}x$, then we write $x\equiv_{\text{ITRM}}y$. If neither $x\leq_{\text{ITRM}}y$ nor $y\leq_{\text{ITRM}}x$, 
we call $x$ and $y$ incomparable and write $x\perp_{\text{ITRM}}y$.
\end{defini}

Clearly, $\equiv_{\text{ITRM}}$ is an equivalence relation.
We may hence form, for each real $x$, the $\equiv_{\text{ITRM}}$-equivalence class $[x]_{\text{ITRM}}$ of $x$, called the ITRM-degree of $x$. It is easy to see that $\leq_{\text{ITRM}}$ respects $\equiv_{\text{ITRM}}$,
so that expressions like $[x]_{\text{ITRM}}\leq_{\text{ITRM}}[y]_{\text{ITRM}}$ etc. are well-defined and have the obvious meaning.

\begin{corollary}{\label{ITRMsplitting}}
 If $a$ is ITRM-c-random, $a=a_{0}\oplus a_{1}$, then $a_{0}\perp_{\text{ITRM}} a_{1}$.
\end{corollary}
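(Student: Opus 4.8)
The plan is to prove the two non-reducibilities $a_0\not\leq_{\text{ITRM}}a_1$ and $a_1\not\leq_{\text{ITRM}}a_0$ separately, in each case reaching a contradiction via Theorem \ref{vLamb} (van Lambalgen). The basic mechanism I would isolate first is the following observation: whenever a real $c$ is ITRM-computable from an oracle $d$, the singleton $\{c\}$ is an ITRM-decidable-in-$d$ meager set containing $c$. It is meager because a single point is nowhere dense in $^{\omega}2$ (the space has no isolated points), and it is decidable in the oracle $d$ because, given a real $z$ presented together with $d$, one can compute $c$ from $d$ and compare it bitwise with $z$, outputting $1$ exactly when they agree. Hence $c\leq_{\text{ITRM}}d$ implies that $c$ is \emph{not} ITRM-c-random relative to $d$, and this is the engine of both directions.

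First I would treat $a_1\not\leq_{\text{ITRM}}a_0$. Since $a=a_0\oplus a_1$ is ITRM-c-random, Theorem \ref{vLamb} yields that $a_0$ is ITRM-c-random and that $a_1$ is ITRM-c-random relative to $a_0$. If we had $a_1\leq_{\text{ITRM}}a_0$, then by the mechanism above $a_1$ would lie in a meager set decidable in the oracle $a_0$, contradicting the relative randomness of $a_1$ over $a_0$. This settles the first direction with no further input.

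For $a_0\not\leq_{\text{ITRM}}a_1$ the only extra ingredient is to obtain relative randomness in the reversed orientation. From the previous step $a_0$ is ITRM-c-random and $a_1$ is ITRM-c-random relative to $a_0$; the latter in particular makes $a_1$ ITRM-c-random, since any ITRM-decidable meager set witnessing non-randomness without an oracle is a fortiori decidable in the oracle $a_0$. As $a_0$ and $a_1$ are now both ITRM-c-random, the symmetry clause of Theorem \ref{vLamb} gives that $a_0$ is ITRM-c-random relative to $a_1$. Assuming $a_0\leq_{\text{ITRM}}a_1$ then places $a_0$ in a meager set decidable in the oracle $a_1$, again a contradiction. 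Combining the two directions gives $a_0\perp_{\text{ITRM}}a_1$.

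The only genuinely delicate point is this second direction, where the relative randomness supplied directly by van Lambalgen points the wrong way; I expect the cleanest route is the symmetry clause of Theorem \ref{vLamb} as above. Alternatively one could argue that $a_1\oplus a_0$ is itself ITRM-c-random, being the image of the ITRM-c-random $a_0\oplus a_1$ under the computable coordinate-swap homeomorphism (which preserves ITRM-decidable meager sets), and then apply Theorem \ref{vLamb} a second time to get $a_0$ ITRM-c-random relative to $a_1$ directly. Either way the remaining verifications, namely the meagerness and oracle-decidability of the relevant singleton, are routine.
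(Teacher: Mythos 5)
Your proof is correct and follows essentially the same route as the paper: the paper likewise derives mutual ITRM-c-randomness of $a_{0}$ and $a_{1}$ from Theorem \ref{vLamb} and then obtains the contradiction from the fact that $\{a_{0}\}$ (resp. $\{a_{1}\}$) is a meager set decidable in the oracle $a_{1}$ (resp. $a_{0}$). You merely spell out in more detail the step the paper compresses into one sentence, namely how the symmetry clause of van Lambalgen upgrades the one-directional relative randomness to mutual randomness.
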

\begin{proof}
 By Theorem \ref{vLamb}, $a_{0}$ and $a_{1}$ are mutually ITRM-c-random. If $a_{0}$ was ITRM-computable from $a_{1}$, then $\{a_{0}\}$ would be decidable
in the oracle $a_{1}$, meager and contain $a_{0}$, so $a_{0}$ would not be ITRM-c-random relative to $a_{1}$, a contradiction. By symmetry, the claim follows.
\end{proof}

\begin{lemma}{\label{randomnessandhalting}}
Let $h$ be a real coding the halting problem for ITRMs as in the remark following Theorem \ref{weakrandomreducibility}. Then there is an ITRM-c-random real $x\leq_{\text{ITRM}} h$.
\end{lemma}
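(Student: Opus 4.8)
The plan is to use the characterization of ITRM-c-randomness supplied by Corollary \ref{randomandcohengeneric}: taking $y=\emptyset$ there, a real $x$ is ITRM-c-random if and only if it is Cohen-generic over $L_{\omega_{\omega}^{\text{CK}}}$. Hence it suffices to exhibit a single real $x$ that is Cohen-generic over $L_{\omega_{\omega}^{\text{CK}}}$ and satisfies $x\leq_{\text{ITRM}}h$. Such an $x$ cannot be ITRM-computable from the empty oracle, since a Cohen-generic real lies outside the ground model $L_{\omega_{\omega}^{\text{CK}}}$ and Theorem \ref{relITRM} identifies COMP with the reals in $L_{\omega_{\omega}^{\text{CK}}}$; so the oracle $h$ must genuinely be used, and the whole content of the statement is that $h$ is exactly strong enough to drive a generic construction over $L_{\omega_{\omega}^{\text{CK}}}$.

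First I would isolate the following key fact: relative to the oracle $h$, the map $n\mapsto\text{cc}(L_{\omega_{n}^{\text{CK}}})$ is uniformly ITRM-computable (equivalently, $h$ computes a code for $L_{\omega_{\omega}^{\text{CK}}}$ and $\omega_{\omega}^{\text{CK}}<\omega_{\omega}^{\text{CK},h}$). The idea is that, knowing $h$, one can decide for each ITRM-program whether it halts and, for those that do, extract a code for its halting ordinal; by Theorem \ref{hp} these halting times are bounded by and cofinal in $\omega_{\omega}^{\text{CK}}$, so assembling them yields a real coding the ordinal $\omega_{\omega}^{\text{CK}}$, from which a code for $L_{\omega_{\omega}^{\text{CK}}}$ (and uniformly for each $L_{\omega_{n}^{\text{CK}}}$) is obtained by the usual ITRM-construction of $L$-levels along an ordinal code. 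It is precisely this uniform enumeration that is unavailable from the empty oracle: each individual code $\text{cc}(L_{\omega_{n}^{\text{CK}}})$ is ITRM-computable, but producing all of them by a single halting computation would require running past $\omega_{\omega}^{\text{CK}}$, which becomes possible only after the halting ceiling has been raised by $h$.

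Granting this, the construction is the standard one. Using the uniformly computed codes I would enumerate, within an ITRM equipped with oracle $h$, the dense subsets $\langle D_{n}:n\in\omega\rangle$ of Cohen forcing lying in $L_{\omega_{\omega}^{\text{CK}}}$ (each $D_{n}$ appears in some $L_{\omega_{k}^{\text{CK}}}$ and is located by searching its code), and then build an increasing chain of conditions $\emptyset=p_{0}\subseteq p_{1}\subseteq\cdots$ with $p_{n+1}\supseteq p_{n}$ and $p_{n+1}\in D_{n}$, each extension found by a bounded search through the relevant code. Setting $x:=\bigcup_{n\in\omega}p_{n}$, the computation that on input $i$ builds the chain until $|p_{n}|>i$ and outputs $p_{n}(i)$ halts in fewer than $\omega_{\omega}^{\text{CK},h}$ steps, so $x\leq_{\text{ITRM}}h$. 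Since $x$ meets every dense subset of Cohen forcing in $L_{\omega_{\omega}^{\text{CK}}}$, it is Cohen-generic over $L_{\omega_{\omega}^{\text{CK}}}$, and Corollary \ref{randomandcohengeneric} then yields that $x$ is ITRM-c-random.

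The main obstacle I expect lies in the key fact of the second paragraph: verifying that $h$ really does compute the $\omega_{n}^{\text{CK}}$ uniformly, i.e. that $\omega_{\omega}^{\text{CK}}<\omega_{\omega}^{\text{CK},h}$ and that a code for $L_{\omega_{\omega}^{\text{CK}}}$ is ITRM-computable from $h$. This demands care in extracting ordinal codes for halting times from the oracle and in checking their cofinality in $\omega_{\omega}^{\text{CK}}$; once the uniform enumeration of $L$-level codes is in hand, the generic construction and the appeal to Corollary \ref{randomandcohengeneric} are routine.
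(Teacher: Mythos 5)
Your proof is correct, but it takes a more constructive route than the paper. The paper argues abstractly: $h$ is ITRM-recognizable and not ITRM-computable, so by Theorem \ref{recogchar} (resp.\ the cited results of \cite{Ca2}) $\omega_{\omega}^{\text{CK},h}>\omega_{\omega}^{\text{CK}}$; by Theorem \ref{relITRM} everything in $L_{\omega_{\omega}^{\text{CK},h}}[h]$ is ITRM-computable from $h$, and since $L_{\omega_{\omega}^{\text{CK}}+3}\subseteq L_{\omega_{\omega}^{\text{CK},h}}$ contains a code for $L_{\omega_{\omega}^{\text{CK}}+2}$, which in turn already contains a real Cohen-generic over $L_{\omega_{\omega}^{\text{CK}}+1}$ (by the projectum drop making $L_{\omega_{\omega}^{\text{CK}}+1}$ countable there), one simply reads such a generic off the code --- no explicit algorithm or fusion argument is needed. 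You instead compute a code for $\omega_{\omega}^{\text{CK}}$ (hence for $L_{\omega_{\omega}^{\text{CK}}}$) directly from $h$ and then run the standard dense-set-meeting construction inside the machine. Both approaches hinge on the same two pillars, namely Corollary \ref{randomandcohengeneric} and the fact that $h$ sees past $\omega_{\omega}^{\text{CK}}$; yours is more self-contained and exhibits the reduction explicitly, while the paper's is shorter because it outsources computability-from-$h$ entirely to Theorem \ref{relITRM}. The only spot where your sketch is thin is ``extract a code for its halting ordinal'': the cleaner standard way to get a code for $\omega_{\omega}^{\text{CK}}$ from $h$ is to use $h$ to decide which programs compute reals coding well-orderings, read those codes out bit by bit via $h$, and assemble a well-ordering of order type their sum --- this is exactly what the paper does (relativized) in the proof of Lemma \ref{haltrecog}, so your key fact is indeed available. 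You should also arrange $|p_{n}|\to\infty$ in the fusion step (e.g.\ by extending each condition by one extra bit), but that is routine.
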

\begin{proof}
 Given $h$, we can compute a code for $L_{\omega_{\omega}^{\text{CK}}+2}$ as follows: By Theorem 4.6 of \cite{Ca2}, $h$ is ITRM-recognizable. Clearly, $h$ is not ITRM-computable. Hence,
by Theorem 3.2 of \cite{Ca2}, we have $\omega_{\omega}^{\text{CK},h}>\omega_{\omega}^{\text{CK}}$. By Theorem \ref{relITRM}, a real $r$ is ITRM-computable from $h$ if and only if
it is an element of $L_{\omega_{\omega}^{\text{CK},h}}[h]$. As $L_{\omega_{\omega}^{\text{CK}}+3}$ contains a code for $L_{\omega_{\omega}^{\text{CK}}+2}$, such a code is contained
in $L_{\omega_{\omega}^{\text{CK},h}}\subseteq L_{\omega_{\omega}^{\text{CK},h}}[h]$ and is hence ITRM-computable from $h$.

Now $L_{\omega_{\omega}^{\text{CK}+2}}$ will contains a real $x$ which is Cohen-generic over $L_{\omega_{\omega}^{\text{CK}}+1}$.
Such an $x$ can easily be computed from a code for $L_{\omega_{\omega}^{\text{CK}+2}}$ and consequently, $x$ itself is ITRM-computable from $h$. 
By Corollary \ref{randomandcohengeneric}, $x$ is ITRM-c-random.
\end{proof}

We have an analogue to the Kleene-Post-theorem on Turing degrees between $0$ and $0^{\prime}$ (see e.g. Theorem VI.1.2 of \cite{So}) for ITRMs. 

\begin{corollary}{\label{ITRMPost}}
With $h$ as in Lemma \ref{randomnessandhalting}, there are $x_{0},x_{1}$ such that we have $[0]_{\text{ITRM}}<_{\text{ITRM}}[x_{0}]_{\text{ITRM}},[x_{1}]_{\text{ITRM}}\leq h$ and $x_{0}\perp_{\text{ITRM}}x_{1}$. 
In particular, there is a real $x_{0}$ such that $[0]_{\text{ITRM}}<_{\text{ITRM}}[x_{0}]_{\text{ITRM}}<_{\text{ITRM}}h$.
\end{corollary}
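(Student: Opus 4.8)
The plan is to manufacture $x_0$ and $x_1$ by splitting the ITRM-c-random real supplied by Lemma \ref{randomnessandhalting}, and then to read off all the required properties from van Lambalgen's theorem, the splitting corollary, and recognizability of $h$. First I would invoke Lemma \ref{randomnessandhalting} to fix an ITRM-c-random real $x$ with $x\leq_{\text{ITRM}}h$, and write $x=x_0\oplus x_1$. Since $x_0$ and $x_1$ are each trivially ITRM-computable from $x$ (they are the two halves of Cantor's pairing), and $x\leq_{\text{ITRM}}h$, transitivity of $\leq_{\text{ITRM}}$ immediately yields $[x_0]_{\text{ITRM}},[x_1]_{\text{ITRM}}\leq h$, which is the upper-bound part of the first assertion.

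Next I would show that neither half is ITRM-computable, so that both lie strictly above $[0]_{\text{ITRM}}$. Applying Theorem \ref{vLamb} to $x=x_0\oplus x_1$ gives that $x_0$ is ITRM-c-random and $x_1$ is ITRM-c-random relative to $x_0$; in particular both $x_0$ and $x_1$ are (plainly) ITRM-c-random, since any ITRM-decidable meager set is a fortiori decidable in the oracle $x_0$, so relative randomness is at least as strong as plain randomness. An ITRM-c-random real cannot be ITRM-computable: if $x_i$ were computable then $\{x_i\}$ would be an ITRM-decidable meager set containing $x_i$, exactly as in the proof of Corollary \ref{ITRMsplitting}. Hence $[0]_{\text{ITRM}}<_{\text{ITRM}}[x_0]_{\text{ITRM}}$ and $[0]_{\text{ITRM}}<_{\text{ITRM}}[x_1]_{\text{ITRM}}$. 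The incomparability $x_0\perp_{\text{ITRM}}x_1$ is then immediate from Corollary \ref{ITRMsplitting}, since $x$ is ITRM-c-random with $x=x_0\oplus x_1$. This completes the first statement.

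For the ``in particular'' clause I would establish the strict bound $[x_0]_{\text{ITRM}}<_{\text{ITRM}}h$, i.e. $h\not\leq_{\text{ITRM}}x_0$. Here I would use that $h$ is ITRM-recognizable (the remark following Theorem \ref{weakrandomreducibility}, citing Theorem $4.6$ of \cite{Ca2}) together with Theorem \ref{weakrandomreducibility}: if we had $h\leq_{\text{ITRM}}x_0$ with $h$ recognizable and $x_0$ ITRM-c-random, then $h$ would have to be ITRM-computable, contradicting the non-computability of the halting problem. Thus the reduction $x_0\leq_{\text{ITRM}}h$ is strict, and combined with the previous paragraph we obtain $[0]_{\text{ITRM}}<_{\text{ITRM}}[x_0]_{\text{ITRM}}<_{\text{ITRM}}h$.

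I do not anticipate a serious obstacle, since the substantive work is already packaged into Lemma \ref{randomnessandhalting}, Theorem \ref{vLamb}, Corollary \ref{ITRMsplitting}, and Theorem \ref{weakrandomreducibility}. The only points requiring any care are (i) confirming that \emph{both} halves of $x$, not merely $x_0$, inherit ITRM-c-randomness from van Lambalgen, which follows from the observation that randomness relative to an oracle implies plain randomness, and (ii) deploying the recognizability of $h$ at precisely the right place to upgrade the non-strict bound $x_0\leq_{\text{ITRM}}h$ to the strict bound $x_0<_{\text{ITRM}}h$.
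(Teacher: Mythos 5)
Your proposal is correct and follows essentially the same route as the paper: pick the ITRM-c-random $x\leq_{\text{ITRM}}h$ from Lemma \ref{randomnessandhalting}, split it as $x_{0}\oplus x_{1}$, and apply Corollary \ref{ITRMsplitting}. The only (harmless) difference is that you obtain the strict bound $[x_{0}]_{\text{ITRM}}<_{\text{ITRM}}h$ via recognizability of $h$ and Theorem \ref{weakrandomreducibility}, whereas it already follows from incomparability alone (if $x_{0}\equiv_{\text{ITRM}}h$ then $x_{1}\leq_{\text{ITRM}}h\equiv_{\text{ITRM}}x_{0}$, contradicting $x_{0}\perp_{\text{ITRM}}x_{1}$).
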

\begin{proof}
Pick $x$ as in Lemma \ref{randomnessandhalting}, let $x=x_{0}\oplus x_{1}$, and use Corollary \ref{ITRMsplitting}.
\end{proof}

Using van Lambalgen, we can show much more:

\begin{thm}{\label{degreestructure}}
 Let $\mathfrak{T}$ denote the tree of finite $0$-$1$-sequences, ordered by reverse inclusion. Then $\mathfrak{T}$ embeds in the ITRM-degrees between $0$ and $0^{\prime}$, i.e. there is an injection
$f:\mathfrak{T}\rightarrow\mathfrak{P}(\omega)$ such that, for all $x,y\in\mathfrak{T}$, we have $0<_{\text{ITRM}}f(x),f(y)<_{\text{ITRM}}0^{\prime}_{\text{ITRM}}$ and $f(x)\leq_{\text{ITRM}}f(y)$ if and only if $x\supseteq y$. In particular, there is
an infinite descending sequence of ITRM-degrees between $0$ and $0^{\prime}_{\text{ITRM}}$. 
\end{thm}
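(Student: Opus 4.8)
The plan is to embed the binary tree $\mathfrak{T}$ (ordered by reverse inclusion) into the ITRM-degrees by slicing up a single ITRM-c-random real that is ITRM-computable from $h$, using van Lambalgen's theorem (Theorem \ref{vLamb}) as the engine that turns "disjoint pieces of a random" into "mutually random, hence incomparable, degrees." The key realization is that the tree order under reverse inclusion means $x \supseteq y$ (i.e.\ $x$ is a \emph{longer} string, a descendant of $y$) should correspond to $f(x) \leq_{\text{ITRM}} f(y)$: descendants in the tree compute \emph{less}. So I want to assign to each node $s \in \mathfrak{T}$ a degree that encodes exactly the information along the path from the root down to $s$, and the embedding should respect the comparability structure so that two incomparable nodes receive incomparable degrees.

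First I would fix, via Lemma \ref{randomnessandhalting}, an ITRM-c-random real $x \leq_{\text{ITRM}} h$, and decompose it into countably many pieces indexed by the nodes of $\mathfrak{T}$: write $x = \bigoplus_{s \in \mathfrak{T}} x_s$ using a computable bijection $\mathfrak{T} \leftrightarrow \omega$ and iterated application of the $\oplus$-splitting. By iterating Theorem \ref{vLamb}, the family $(x_s)_{s \in \mathfrak{T}}$ is \emph{mutually} ITRM-c-random in the strong sense that for any node $s$, the piece $x_s$ is ITRM-c-random relative to the join $\bigoplus_{t \neq s} x_t$ of all the others. Then, for a node $s \in \mathfrak{T}$, I would set $f(s) := \bigoplus_{t \supseteq s} x_t$, the join of all pieces attached to $s$ and its descendants (equivalently, the pieces sitting \emph{below} $s$ in the reverse-inclusion order). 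The point of indexing by the subtree below $s$ is that $s \supseteq s'$ forces $\{t : t \supseteq s\} \subseteq \{t : t \supseteq s'\}$, which immediately gives $f(s) \leq_{\text{ITRM}} f(s')$ whenever $s \supseteq s'$.

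The substantive direction is the converse: if $f(s) \leq_{\text{ITRM}} f(s')$ then $s \supseteq s'$. Suppose $s \not\supseteq s'$; then there is at least one node $t$ with $t \supseteq s$ but $t \not\supseteq s'$ (for instance $t = s$ itself, since $s$ lies below $s$ but not below $s'$ when $s'$ is not an ancestor of $s$). So $x_t$ is a constituent of $f(s)$ but is \emph{disjoint} from all constituents of $f(s')$. Now $x_t$ is ITRM-c-random relative to the join of all the other pieces, and in particular relative to $f(s')$ (which is built from pieces other than $x_t$). If we had $f(s) \leq_{\text{ITRM}} f(s')$, then $x_t \leq_{\text{ITRM}} f(s') $ as well, making $x_t$ ITRM-computable from a real relative to which it is c-random; arguing exactly as in Corollary \ref{ITRMsplitting} (the singleton $\{x_t\}$ would be an $f(s')$-decidable meager set containing $x_t$), this contradicts the c-randomness of $x_t$ relative to $f(s')$. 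This establishes the "only if" direction and, taking $s' = s$ with distinct incomparable nodes, also the injectivity and incomparability claims.

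It remains to verify the degree bounds $0 <_{\text{ITRM}} f(s) <_{\text{ITRM}} 0'_{\text{ITRM}}$. Each $f(s)$ is a sub-join of $x$, hence ITRM-computable from $x \leq_{\text{ITRM}} h \equiv_{\text{ITRM}} 0'_{\text{ITRM}}$, giving $f(s) \leq_{\text{ITRM}} 0'_{\text{ITRM}}$; and $f(s)$ is non-computable because it has a c-random piece $x_s$ as a component, and a c-random real is never ITRM-computable (a single computable real would be a decidable meager set containing it). Strictness below $0'_{\text{ITRM}}$ follows since $h$ is ITRM-recognizable and hence, by Theorem \ref{weakrandomreducibility}, cannot be ITRM-reducible to any ITRM-c-random real; if $f(s) \equiv_{\text{ITRM}} h$ then $h$ would be computable from the c-random real $f(s)$ (which is c-random by van Lambalgen, being a join of pieces of $x$), a contradiction. \textbf{The main obstacle} I anticipate is bookkeeping the mutual-randomness of the infinitary family $(x_s)_{s\in\mathfrak{T}}$ rigorously: van Lambalgen as stated (Theorem \ref{vLamb}) is a two-real statement, so I must argue that the relevant finite regroupings — "$x_t$ versus the join of the rest of the pieces appearing in $f(s)$ and $f(s')$" — can be reduced to a single application of the two-real version by collecting all the other relevant pieces into one side of an $\oplus$, which works because $f(s)$ and $f(s')$ together involve only countably many pieces and comparability of reducibility is witnessed by finitely much data; the care needed is to ensure the regrouped "other side" is still a join of distinct pieces of the original random $x$ so that the relative-randomness of $x_t$ is preserved.
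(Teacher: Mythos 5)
Your proof is correct and rests on the same engine as the paper's: fix an ITRM-c-random real $x$ with $0<_{\text{ITRM}}x<_{\text{ITRM}}0^{\prime}_{\text{ITRM}}$ (Lemma \ref{randomnessandhalting} together with the remark after Theorem \ref{weakrandomreducibility}), carve it into pieces, and convert mutual ITRM-c-randomness (via Theorem \ref{vLamb}) into incomparability exactly as in Corollary \ref{ITRMsplitting}. The difference lies in the decomposition. The paper assigns to each node $s$ a single nested slice $r_{s}$ obtained by iterated binary splitting ($r_{s}=r_{s0}\oplus r_{s1}$), so the tree order is realized by nesting of slices; the price is that, to show $r_{s}\perp_{\text{ITRM}}r_{t}$ for incomparable $s,t$ with meet $u$, it must lift a meager set witnessing non-randomness of the deep slice $r_{s}$ relative to $r_{u1}$ up to a meager set containing $r_{u0}$, which is done by the explicit padding induction $U\mapsto U\oplus[0,1]$ or $[0,1]\oplus U$ along the path from $u0$ to $s$. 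You instead use pairwise disjoint pieces $x_{t}$ and set $f(s)=\bigoplus_{t\supseteq s}x_{t}$, which makes the ``only if'' direction a one-step consequence of ``$x_{s}$ is c-random relative to the join of all other pieces'' and avoids the padding induction entirely. What your version needs in exchange --- and you correctly flag this as the delicate point --- is that the two-real Theorem \ref{vLamb} applies to the regrouping $x\cong x_{s}\oplus\bigoplus_{t\neq s}x_{t}$, i.e., that ITRM-c-randomness is invariant under recursive permutations of bit positions; this is easy (such a permutation induces a homeomorphism of Cantor space that preserves both meagerness and ITRM-decidability of test sets) but should be stated and proved as a lemma, since the paper only ever invokes van Lambalgen for the literal binary splitting. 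Your verification of the bounds $0<_{\text{ITRM}}f(s)<_{\text{ITRM}}0^{\prime}_{\text{ITRM}}$ is also fine, and the upper bound could be had even more cheaply from $f(s)\leq_{\text{ITRM}}x$ together with $0^{\prime}_{\text{ITRM}}\not\leq_{\text{ITRM}}x$.
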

\begin{proof}
We shall, for each $s\in^{<\omega}2$, define a real $r_{s}$ in such a way that the set $\{[r_{s}]_{\text{ITRM}}|s\in^{<\omega}2\}$ has the desired property.
 Let $r=r_{()}$ be ITRM-c-random such that $0<_{\text{ITRM}}r<_{\text{ITRM}}0^{\prime}_{\text{ITRM}}$. Assuming that $r_{s}$ is already defined, let $r_{s}=x_{0}\oplus x_{1}$ and set $r_{s0}=x_{0}$, $r_{s1}=x_{1}$.
Clearly, if $s$ is a proper initial segment of $t$, then $r_{s}<_{\text{ITRM}}r_{t}$. We show that furthermore, when $s$ and $t$ are incompatible (i.e. none is an initial segment of the other), then
$r_{s}\perp_{\text{ITRM}}r_{t}$: Let $u=r_{s}\cap r_{t}$ be the longest common initial segment of $r_{s}$ and $r_{t}$. Without loss of generality, assume that $s$ starts with $u0$ and that $t$ starts with $u1$.
Then $r_{u}=r_{u0}\oplus r_{u1}$, so, by Theorem \ref{vLamb}, $u0$ and $u1$ are mutually random. Now suppose that $r_{s}$ and $r_{t}$ are not mutually random, e.g. that $r_{s}$ was not random relative to $r_{t}$. Then, as  $r_{t}$ is recursive
in $r_{u1}$, $r_{s}$ is not random relative to $r_{u1}$. Let $U$ be a meager set containing $r_{s}$ which is ITRM-decidable in the oracle $r_{u1}$.

Let $s^{\prime}$ be a binary string such that $u0s^{\prime}=s$ and let $l$ denote its length. We recursively define a sequence $(U_{i}:0\leq i\leq l)$ of subsets of $[0,1]$ as follows:
Let $U_{0}=0$ and for $\leq i<n$ let $U_{i+1}:=U_{i}\oplus [0,1]$ when the $l-i$th digit of $l$ is $0$ and otherwise $U_{i+1}:=[0,1]\oplus U_{i}$. By an easy induction, we have that
$r_{u0(s^{\prime}|i)}\in U_{l-i}$ for $0\leq i\leq l$, where $s^{\prime}|i$ denotes the restriction of $s^{\prime}$ to its first $i$ many bits. In particular, we have $r_{u0}\in U_{l}$.
Moreover, when $X$ is meager, then so are $X\oplus[0,1]$ and $[0,1]\oplus X$, so as $U_{0}=U$ is meager, it follows by another induction that $U_{i}$ is meager for $0\leq i\leq l$ and in particular that $U_{l}$ is meager. It is also easy to see
inductively that, as $U_{0}=U$ is ITRM-decidable in the oracle $r_{u1}$, so is $U_{i}$ for all $0\leq i\leq l$ and hence in particular for $i=l$. So $U_{l}$ is meager, contains $r_{u0}$ and is ITRM-decidable in the oracle $r_{u1}$,
contradicting the fact that $r_{u0}$ is ITRM-c-random relative to $r_{u1}$.
\end{proof}

\begin{corollary}
The ITRM-degrees of ITRM-c-random reals contain no minimal or maximal element under ITRM-reduction. 
\end{corollary}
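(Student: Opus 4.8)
The plan is to derive both non-existence claims directly from van Lambalgen's theorem (Theorem \ref{vLamb}), the splitting result (Corollary \ref{ITRMsplitting}), and the identification of relative $c$-randomness with Cohen-genericity (Corollary \ref{randomandcohengeneric}); no new machinery should be needed, since the heavy lifting is already done by van Lambalgen.

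For the absence of a minimal element, I would start with an arbitrary ITRM-c-random real $x$ and split it as $x = x_{0} \oplus x_{1}$. By Theorem \ref{vLamb}, $x_{0}$ is then ITRM-c-random, and by Corollary \ref{ITRMsplitting} we have $x_{0} \perp_{\text{ITRM}} x_{1}$. Clearly $x_{0} \leq_{\text{ITRM}} x$. The point is to check that this reduction is strict: if we had $x \leq_{\text{ITRM}} x_{0}$, then from $x_{1} \leq_{\text{ITRM}} x$ we would obtain $x_{1} \leq_{\text{ITRM}} x_{0}$, contradicting $x_{0} \perp_{\text{ITRM}} x_{1}$. Hence $[x_{0}]_{\text{ITRM}} <_{\text{ITRM}} [x]_{\text{ITRM}}$ with $x_{0}$ again ITRM-c-random, so no ITRM-c-random degree can be minimal among the random degrees.

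For the absence of a maximal element, given an ITRM-c-random $x$ I would produce a real that is ITRM-c-random relative to $x$. By Corollary \ref{randomandcohengeneric} this amounts to choosing $z$ Cohen-generic over $L_{\omega_{\omega}^{\text{CK},x}}[x]$, and such $z$ exist because the Cohen-generics over a countable model form a comeager (in particular non-empty) set. Set $y := x \oplus z$. Since $x$ is ITRM-c-random and $z$ is ITRM-c-random relative to $x$, Theorem \ref{vLamb} gives that $y$ is ITRM-c-random, and trivially $x \leq_{\text{ITRM}} y$. For strictness I would argue that $z \not\leq_{\text{ITRM}} x$: otherwise $\{z\}$, being a single point (hence meager) decidable in the oracle $x$, would witness that $z$ is not ITRM-c-random relative to $x$. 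Thus $y \not\leq_{\text{ITRM}} x$ (else $z \leq_{\text{ITRM}} y \leq_{\text{ITRM}} x$), giving $x <_{\text{ITRM}} y$ with $y$ ITRM-c-random.

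The proof is essentially a bookkeeping exercise once van Lambalgen is available, so I do not expect a genuine obstacle; the only points requiring care are the two strictness verifications, both of which reduce to the observation that a real computable from a given oracle $w$ lies in a singleton (hence meager) set decidable in $w$, so the relevant relative randomness rules out such reducibilities.
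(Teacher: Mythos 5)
Your proposal is correct and follows essentially the same route as the paper: split $x=x_0\oplus x_1$ and apply Theorem \ref{vLamb} for non-minimality, and adjoin a Cohen-generic over $L_{\omega_{\omega}^{\text{CK},x}}[x]$ (via Corollary \ref{randomandcohengeneric} and Theorem \ref{vLamb}) for non-maximality. Your explicit strictness verifications (via Corollary \ref{ITRMsplitting} and the singleton-meager-set argument) are exactly the details the paper leaves implicit with ``clearly''.
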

\begin{proof}
Let $y$ is ITRM-c-random, $y=y_{1}\oplus y_{2}$.

Then first, by Theorem \ref{vLamb}, $y_{1}$ is ITRM-c-random and $y_{1}<_{\text{ITRM}}y$. Hence there is no $<_{\text{ITRM}}$-minimal ITRM-c-random real $y$.

Furthermore, we have $\omega_{\omega}^{\text{CK},y}=\omega_{\omega}^{\text{CK}}$ by Theorem \ref{Admpreserv}, so $L_{\omega_{\omega}^{\text{CK},y}}[y]=L_{\omega_{\omega}^{\text{CK}}}[y]$;
Let $x$ be Cohen-generic over $L_{\omega_{\omega}^{\text{CK}}}[y]$, then, by the relativized version of Lemma \ref{randomandcohengeneric} (see the remark following the Lemma), $x$
is ITRM-c-random relative to $y$. By Theorem \ref{vLamb}, $z:=x\oplus y$ is ITRM-c-random, and we clearly have $y<_{\text{ITRM}}z$, so $y$ is not maximal.

\end{proof}

In the Turing degrees, it is a striking experience that intermediate degrees (i.e. degrees lying properly between two iterations of the Turing jump) don't seem to come up `naturally', but need to
be constructed on purpose (see e.g. the discussion in \cite{Si}). Concerning ITRMs, we can to a certain extent prove that something similar is going on: Namely, reals of intermediate degree are never
recognizable.


\begin{thm}{\label{recogchar}}
If $x\subseteq\omega$ is recognizable, then $x\in L_{\omega_{\omega}^{\text{CK},x}}$.
\end{thm}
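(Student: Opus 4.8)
The plan is to recycle the Jensen--Karp absoluteness argument from the proof of Theorem~\ref{ITRMbase}, adding one new ingredient that is special to recognizability: since the recognizing program pins down $x$ \emph{uniquely}, any constructible witness that the argument produces must literally be $x$ itself. This will place $x$ at the very level of the non-relativized $L$-hierarchy where the argument locates its witness.

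Concretely, let $P$ be a program recognizing $x$ in the sense of Definition~\ref{recogdef}, say using $n$ registers, and put $\mu:=\omega_{\omega}^{\text{CK},x}$. First I would invoke Theorem~\ref{hp}: since $P^{x}\downarrow=1$, this computation halts in fewer than $\omega_{n+1}^{\text{CK},x}\le\mu$ steps, and by absoluteness of computations the entire halting computation lives inside $L_{\mu}[x]$, so $L_{\mu}[x]\models\exists z\, P^{z}\downarrow=1$. The statement $\exists z\, P^{z}\downarrow=1$ is $\Sigma_{1}$ and parameter-free (the program $P$ is just a natural number), so the Jensen--Karp theorem applies exactly as in Theorem~\ref{ITRMbase} to transfer it from $L_{\mu}[x]$ down to $L_{\mu}$, yielding $L_{\mu}\models\exists z\, P^{z}\downarrow=1$. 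I would then fix a witness $\bar{x}\in L_{\mu}$ with $L_{\mu}\models P^{\bar{x}}\downarrow=1$.

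The decisive step is to identify $\bar{x}$ with $x$. The object verifying $L_{\mu}\models P^{\bar{x}}\downarrow=1$ is an honest halting computation of ordinal length below $\mu$, and running $P$ with oracle $\bar{x}$ in $V$ reproduces the same computation; hence $P^{\bar{x}}\downarrow=1$ holds in $V$. Because $P$ recognizes $x$, the set $\{z\mid P^{z}\downarrow=1\}$ equals $\{x\}$, forcing $\bar{x}=x$. Therefore $x=\bar{x}\in L_{\mu}=L_{\omega_{\omega}^{\text{CK},x}}$, as claimed.

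The part I expect to require the most care is the invocation of Jensen--Karp at $\mu=\omega_{\omega}^{\text{CK},x}$: one must check that this ordinal meets the hypotheses under which $\Sigma_{1}$ truth reflects from $L_{\mu}[x]$ to $L_{\mu}$. This is precisely the configuration already exploited in Theorem~\ref{ITRMbase}, where $\mu$ was an $\omega$-th relativized admissible, i.e. a limit of $x$-admissible ordinals, so I would simply appeal to that verification. The remaining subtlety---the upward absoluteness of the witnessing computation and the uniqueness inference $\bar{x}=x$---is routine given the absoluteness of ITRM-computations between transitive models containing the oracle.
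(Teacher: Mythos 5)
Your argument is correct and is essentially the paper's own proof: both run the halting computation $P^{x}\downarrow=1$ inside a transitive model of height $\mu=\omega_{\omega}^{\text{CK},x}$ (the paper uses $V_{\mu}$, you use $L_{\mu}[x]$, an immaterial difference since $\Sigma_1$ statements pass upward to $V_{\mu}$ anyway), apply Jensen--Karp at the limit of admissibles $\mu$ to get a witness in $L_{\mu}$, and then use absoluteness of computations plus the uniqueness coming from recognizability to conclude that the witness is $x$. No gaps; the paper merely compresses the final identification step that you spell out.
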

\begin{proof}
 Let $P$ be a program recognizing $x$. Then, by Theorem \ref{hp}, $P^{x}$ stops in less than $\omega_{\omega}^{\text{CK},x}$ many steps. Consequently, the computation is contained in $V_{\omega_{\omega}^{\text{CK},x}}$.
Hence $V_{\omega_{\omega}^{\text{CK},x}}\models\exists{y}P^{y}\downarrow=1$. Clearly, $\omega_{\omega}^{\text{CK},x}$ is a limit of admissible ordinals. By a theorem of Jensen and Karp (\cite{JeKa}), $\Sigma_1$-formulas
are absolute between $L_{\alpha}$ and $V_{\alpha}$ whenever $\alpha$ is a limit of admissibles. Hence $L_{\omega_{\omega}^{\text{CK},x}}\models\exists{y}P^{y}\downarrow=1$. By absoluteness of computations then,
we have $x\in L_{\omega_{\omega}^{\text{CK},x}}$, as desired.
\end{proof}

\begin{lemma}{\label{haltrecog}}
 If $x$ is recognizable, then so is $x\oplus x^{\prime}_{\text{ITRM}}$.
\end{lemma}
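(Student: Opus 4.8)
The plan is to build a recognizer $R$ for $x\oplus x^{\prime}_{\text{ITRM}}$ out of a recognizer $Q$ for $x$ (which exists by hypothesis) together with a relativized form of the fact, cited in the remark following Theorem \ref{weakrandomreducibility}, that the halting number is recognizable. Given an oracle $z$, the machine first decodes $z=a\oplus b$ and runs $Q^{a}$; if $Q^{a}\downarrow=0$ it halts with output $0$, so from this point on we may assume $a=x$. It then remains to decide whether $b=a^{\prime}_{\text{ITRM}}$, i.e. whether $b=\{i\mid P_{i}^{a}\downarrow\}$.

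The verification of $b=a^{\prime}_{\text{ITRM}}$ proceeds through the constructible hierarchy. By Theorem \ref{hp} every halting computation $P_{i}^{a}$ stops in fewer than $\omega_{\omega}^{\text{CK},a}$ steps, so $i\in a^{\prime}_{\text{ITRM}}$ is equivalent to the $\Sigma_{1}$ statement that $L_{\omega_{\omega}^{\text{CK},a}}[a]$ contains a halting computation of $P_{i}^{a}$; consequently $a^{\prime}_{\text{ITRM}}$ is uniformly computable from any real coding $L_{\omega_{\omega}^{\text{CK},a}}[a]$ via the evaluation algorithm of section $6$ of \cite{ITRM}. The idea is therefore to use the candidate $b$ to bootstrap such a code: treating $b$ as a halting oracle for $a$, one builds the levels $L_{\xi}[a]$ along the $a$-admissibles and, using $b$ to locate them, stops at the $\omega$-th one, yielding a candidate code $c$. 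Using the techniques of \cite{ITRM2} one then checks, from $c$ alone, that $c$ genuinely codes a structure $L_{\beta}[a]$ with $\beta$ the least limit of $a$-admissibles (that is, $\beta=\omega_{\omega}^{\text{CK},a}$) and with $a$ correctly represented; if this check fails, $R$ outputs $0$. Finally $R$ reads off the set $\tilde b$ of all $i$ such that $c$ certifies $P_{i}^{a}\downarrow$, which equals the true $a^{\prime}_{\text{ITRM}}$, and outputs $1$ if $\tilde b=b$ and $0$ otherwise.

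For correctness, if $z=x\oplus x^{\prime}_{\text{ITRM}}$ then $a=x$ passes the first test, $b=a^{\prime}_{\text{ITRM}}$ correctly pins down the $a$-admissibles so that $c$ is a genuine code passing the verification, and $\tilde b=a^{\prime}_{\text{ITRM}}=b$, so $R$ outputs $1$. Conversely, if $a\ne x$ the first test already rejects; and if $a=x$ but $b\ne a^{\prime}_{\text{ITRM}}$, then either the construction fails to produce a code passing the verification (output $0$), or it does produce a genuine code $c$ for $L_{\omega_{\omega}^{\text{CK},a}}[a]$, from which the true $a^{\prime}_{\text{ITRM}}$ is read off and found to differ from $b$ (output $0$).

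The main obstacle is \textbf{totality}: a recognizer must halt with output $0$ on every oracle that is not $x\oplus x^{\prime}_{\text{ITRM}}$, whereas the $b$-guided construction of $c$ could a priori diverge for an adversarial $b$. Note that no fixed program can compute $a^{\prime}_{\text{ITRM}}$ from $a$ alone, since solving the full $a$-halting problem requires unboundedly many registers (Theorem \ref{hp}); this is precisely why the oracle $b$ must be used to drive the construction, and why divergence is a genuine danger. I expect to handle this exactly as in the proof that the halting number is recognizable (Theorem $4.6$ of \cite{Ca2}): the construction is run together with a consistency check on the partial structure built so far that aborts with output $0$ as soon as $b$ forces an inconsistency (for instance, asserting that a computation halts which the already-constructed initial segment shows cannot), and the final independent verification of $c$ guarantees that only a correct code is ever accepted. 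Recognizability of $x$, which by Theorem \ref{recogchar} gives $a=x\in L_{\omega_{\omega}^{\text{CK},a}}$, ensures that $L_{\omega_{\omega}^{\text{CK},a}}[a]$ behaves as a genuine constructible level so that these checks go through.
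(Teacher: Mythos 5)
Your proposal is correct and follows essentially the same architecture as the paper's proof: recognize the first component with the given recognizer, use the second component to produce a candidate code for $L_{\omega_{\omega}^{\text{CK},x}}[x]$, independently verify that code, and then compare the halting facts read off from it against the second component. The only difference is one of implementation in a single step -- the paper first shows $\omega_{\omega}^{\text{CK},x'}>\omega_{\omega}^{\text{CK},x}$ and then invokes Theorem \ref{relITRM} to obtain a fixed program $Q$ computing the code from $x'$ (handling totality by running a halting-problem solver for $Q$ on the candidate), whereas you build the code explicitly from $b$ with consistency checks as in \cite{Ca2}; both work.
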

\begin{proof}
This is a relativized version of Theorem $26$ of \cite{Ca2}. We sketch the proof for completeness: By assumption, let $P$ be an ITRM-program that recognizes $x$.

It then follows that $\omega_{\omega}^{\text{CK},x^{\prime}}>\omega_{\omega}^{\text{CK},x}$: The idea is to first use $x^{\prime}_{\text{ITRM}}$ to decide for any program $P_{i}$ whether 
$P_{i}^{x}$ computes a real coding a well-ordering; moreover, these codes can be read out explictly with the help of $x^{\prime}$. To see this, let $P_{i}$ be given. It is easy to compute from $i$ the index $f(i)$
for an ITRM-program $P_{f(i)}$ such that $P_{f(i)}^{x}$ halts if and only if $P_{i}^{x}$ halts with output $0$ or $1$, i.e. computes a real number. Given that $P_{i}^{x}$ computes a real number, as ITRMs can check
relations for well-foundedness (see \cite{ITRM}), it is also easy to effectively obtain from $i$ an ITRM-program index $g(i)$ such that $P_{g(i)}^{x}$ halts if and only if the real computed by $P_{i}^{x}$ codes a well-founded relation.
Furthermore, given that $P_{i}^{x}$ computes a real number, it is also easy to effectively obtain from $i$ an ITRM-program index $h(i)$ such that for every $j\in\omega$, $P_{h(i)}^{x}(j)$ halts if and only if the $j$th bit of the
real number computed by $P_{i}^{x}$ is $1$. Hence, using $x^{\prime}$, we can compute the $j$th bit of the $i$th code of a well-ordering ITRM-computable in the oracle $x$.

Finally, we can now assemble all of these codes into a code for a well-ordering of order type their sum, which will be the supremum of the ordinals with codes ITRM-computable in the oracle $x$, i.e. $\omega_{\omega}^{\text{CK},x}$. Thus, from
$x^{\prime}$, we can compute a code for a well-ordering of order-type $\omega_{\omega}^{\text{CK},x}$.

As $\omega_{\omega}^{\text{CK},x^{\prime}}$ is the supremum of the ordinals with codes ITRM-computable in the oracle $x^{\prime}$, it follows that $\omega_{\omega}^{\text{CK},x^{\prime}}>\omega_{\omega}^{\text{CK},x}$;
hence there is some minimal $k\in\omega$ such that $\omega_{k}^{\text{CK},x^{\prime}}>\omega_{\omega}^{\text{CK},x}$. There is a real $c$ coding $L_{\omega_{\omega}^{\text{CK},x}}[x]$ contained in $L_{\omega_{\omega}^{\text{CK},x}+1}[x]$ and hence
in $L_{\omega_{k}^{\text{CK},x^{\prime}}}[x^{\prime}]$ (as $x$ is computable from $x^{\prime}$). By Theorem \ref{relITRM} above, there is an ITRM-program $Q$ computing $c$ in the oracle $x^{\prime}$. But as $\omega_{\omega}^{\text{CK},x}$ is the supremum
of the halting times for ITRM-programs in the oracle $x$, a program in the oracle $x$ will halt inside $L_{\omega_{\omega}^{\text{CK},x}}[x]$ if and only if it halts in the real world. Checking whether some ITRM-program $P_{i}^{x}$ halts
hence reduces to checking a first-order statement in the structure coded by $c$, which can be done as described in the last section of \cite{ITRM2}. $x^{\prime}$ can now be identified by checking whether the statement `$P_{i}^{x}\downarrow$' holds
in the structure coded by $c$ for each $i\in\omega$ and comparing the results to the oracle.

This leads to the following procedure for recognizing $x\oplus x^{\prime}$: Suppose that $y=y_{1}\oplus y_{2}$ is given in the oracle. First, we run $P$ on $y_{1}$ to check whether $y_{1}=x$. If not, then $y\neq x\oplus x^{\prime}$.
Otherwise, check, using a halting problem solver for $Q$, whether $Q^{y_{2}}$ computes a real $r$ coding $L_{\omega_{\omega}^{\text{CK},y_{1}}}[y_{1}]$. If not, then $y\neq x\oplus x^{\prime}$. Otherwise, using $r$, check for each $i\in\omega$
whether $L_{\omega_{\omega}^{\text{CK},x}}[x]\models P_{i}^{x}\downarrow\leftrightarrow i\in y_{2}$. If not, then $y\neq x\oplus x^{\prime}$, otherwise we have $y=x\oplus x^{\prime}$.
\end{proof}

\begin{corollary}{\label{hpitrecog}}
 If $x\subseteq\omega$ is recognizable, then so is $x^{\prime}_{\text{ITRM}}$.
\end{corollary}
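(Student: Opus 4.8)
The plan is to derive the recognizability of $x^{\prime}_{\text{ITRM}}$ from Lemma \ref{haltrecog}, which already does the hard work of recognizing the join $x\oplus x^{\prime}_{\text{ITRM}}$, together with the fact (used in the proof of that lemma) that $x$ is ITRM-computable from $x^{\prime}_{\text{ITRM}}$. The guiding idea is that any candidate $w$ for $x^{\prime}_{\text{ITRM}}$ already determines a corresponding candidate for $x$ via this reduction, so that a recognizer for the pair $x\oplus x^{\prime}_{\text{ITRM}}$ can be turned into a recognizer for its second component alone.

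First I would fix a program $S$ recognizing $x\oplus x^{\prime}_{\text{ITRM}}$ (Lemma \ref{haltrecog}) and a program $R$ with $R^{x^{\prime}_{\text{ITRM}}}\downarrow=x$, witnessing $x\leq_{\text{ITRM}}x^{\prime}_{\text{ITRM}}$ as noted in that proof. Using the halting-problem solver of Theorem \ref{hp}, I may assume that $R^{w}$ computes a total real for every oracle $w$, exactly as in the proof of Theorem \ref{cntblrnd}. The recognizer for $x^{\prime}_{\text{ITRM}}$ then operates as follows: given $w$ in the oracle, compute $v:=R^{w}$, and simulate $S$ on the join $v\oplus w$, returning its output.

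For correctness, recall from the construction in Lemma \ref{haltrecog} that $S$, run on a join $y_{1}\oplus y_{2}$, first tests $y_{1}=x$ and then, assuming this holds, tests $y_{2}=x^{\prime}_{\text{ITRM}}$; it accepts precisely when both hold. Feeding $v\oplus w$ to $S$ therefore returns $1$ iff $v=x$ and $w=x^{\prime}_{\text{ITRM}}$. If $w=x^{\prime}_{\text{ITRM}}$, then $v=R^{x^{\prime}_{\text{ITRM}}}=x$, so both conditions hold and $S$ returns $1$; if $w\neq x^{\prime}_{\text{ITRM}}$, then regardless of the value of $v$ the second condition fails and $S$ returns $0$. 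Hence the described program decides $\{x^{\prime}_{\text{ITRM}}\}$, so $x^{\prime}_{\text{ITRM}}$ is recognizable.

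The only point requiring genuine care is the totality of $R$: a recognizer must halt on every oracle, whereas $R$ a priori only computes $x$ on the correct oracle $x^{\prime}_{\text{ITRM}}$. This is precisely what Theorem \ref{hp} resolves, since I can first use a halting-problem solver for $R$ to test whether $R^{w}$ computes a total real and output $0$ when it does not --- legitimate because $R^{x^{\prime}_{\text{ITRM}}}=x$ is total, so failure of totality already certifies $w\neq x^{\prime}_{\text{ITRM}}$. Everything else is the routine composition of ITRM-programs.
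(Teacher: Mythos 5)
Your proposal is correct and follows essentially the same route as the paper: compute a candidate $v$ for $x$ from the oracle $w$ via the reduction $x\leq_{\text{ITRM}}x^{\prime}_{\text{ITRM}}$ (made total with a halting-problem solver from Theorem \ref{hp}), then feed $v\oplus w$ to the recognizer for $x\oplus x^{\prime}_{\text{ITRM}}$ supplied by Lemma \ref{haltrecog}. The only difference is cosmetic --- the paper inserts an intermediate check that $v=x$ using the recognizer for $x$ before invoking Lemma \ref{haltrecog}, a step your argument correctly observes is subsumed by the join recognizer itself.
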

\begin{proof}
Note that $x$ is Turing-computable from $x^{\prime}_{\text{ITRM}}$: To determine the $i$th bit of $x$ when $x^{\prime}_{\text{ITRM}}$ is given, simply use $x^{\prime}_{\text{ITRM}}$ to 
check whether the ITRM-program $Q_{i}$ that stops when the $i$th bit of its oracle is $1$ and otherwise enters 
an infinite loop halts in the oracle $x$. This works uniformly in $x$. Let $Q$ be an ITRM-program that computes $x$ from $x^{\prime}_{\text{ITRM}}$, for every $x\subseteq\omega$.

To recognize $x^{\prime}_{\text{ITRM}}$, we proceed as follows: First, let $P$ be a program that recognizes $x$. Now, given $y$ in the oracle, first check, using a halting problem solver for $Q$, whether $Q^{y}$ computes a real number.
If not, then $y\neq x^{\prime}$. Otherwise, let $z$ be that number and run $P^{z}$. If $P^{z}\downarrow=0$, then $z\neq x$, so $y\neq x^{\prime}_{\text{ITRM}}$ (since $Q$ would have computed $x$ from $x^{\prime}_{\text{ITRM}}$).
Otherwise, we have $z=x$ and can use Lemma \ref{haltrecog} to check whether $z\oplus y=x\oplus x^{\prime}_{\text{ITRM}}$, which will be the case if and only if $y=x^{\prime}_{\text{ITRM}}$.
\end{proof}

\begin{thm}{\label{iteratedhp}}
 For $i\in\omega$, we have $\omega_{\omega}^{\text{CK},0^{(i)}_{\text{ITRM}}}=\omega_{\omega(i+1)}^{\text{CK}}$; consequently, $i+1$ is the minimal $n\in\omega$ such that $0_{\text{ITRM}}^{(i)}\in L_{\omega_{\omega n}^{\text{CK}}}$.
\end{thm}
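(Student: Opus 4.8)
The plan is to prove the displayed identity by induction on $i$ and then to read off the ``consequently'' clause. The base case $i=0$ is just $\omega_{\omega}^{\text{CK},0}=\omega_{\omega}^{\text{CK}}=\omega_{\omega\cdot 1}^{\text{CK}}$. The auxiliary observation driving everything is that every iterated jump is \emph{recognizable}: $0'_{\text{ITRM}}=h$ is recognizable by the remark following Theorem \ref{weakrandomreducibility}, and recognizability is preserved under the jump by Corollary \ref{hpitrecog}. Hence Theorem \ref{recogchar} gives $0^{(i)}_{\text{ITRM}}\in L_{\omega_{\omega}^{\text{CK},0^{(i)}}}$ for every $i$. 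For the inductive step I would fix $x=0^{(i)}_{\text{ITRM}}$ and write $\beta:=\omega_{\omega}^{\text{CK},x}$, assuming $\beta=\omega_{\omega(i+1)}^{\text{CK}}$. Since $x\in L_{\beta}$ we have $L_{\beta}[x]=L_{\beta}$, so I may work with a genuine real code for the \emph{pure} level $L_{\beta}=L_{\omega_{\omega(i+1)}^{\text{CK}}}$; take $c:=\text{cc}(L_{\beta})\in L_{\beta+1}$.

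The next step is to show $0^{(i+1)}_{\text{ITRM}}=x'_{\text{ITRM}}\equiv_{\text{ITRM}}c$. The reduction $c\leq_{\text{ITRM}}x'$ is exactly the computation performed in the proof of Lemma \ref{haltrecog}: from $x'$ one computes a real coding $L_{\omega_{\omega}^{\text{CK},x}}[x]=L_{\beta}$, from which the canonical code $c$ is ITRM-computable (decode $L_{\beta}$, pass to $L_{\beta+1}$, and read off the $<_{L}$-minimal code). For the converse $x'\leq_{\text{ITRM}}c$, note that by Theorem \ref{hp} every halting computation $P_{i}^{x}$ stops in fewer than $\omega_{\omega}^{\text{CK},x}=\beta$ steps, so by absoluteness $P_{i}^{x}\downarrow$ if and only if $L_{\beta}[x]\models P_{i}^{x}\downarrow$; this is a first-order condition on the structure coded by $c$ and can be evaluated with the methods from the last section of \cite{ITRM2}, so membership in $x'$ is decidable from $c$. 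Using monotonicity of $\omega_{\omega}^{\text{CK},\cdot}$ under $\leq_{\text{ITRM}}$ (which follows from Theorem \ref{relITRM}, since reals computable from $a$ are then computable from $b$, so the supremum of the order types of computable well-orderings can only grow), the equivalence $x'\equiv_{\text{ITRM}}c$ yields $\omega_{\omega}^{\text{CK},x'}=\omega_{\omega}^{\text{CK},c}$, and it remains only to compute the right-hand side.

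The heart of the argument, and the step I expect to be the main obstacle, is the claim that coding up $L_{\beta}$ ``resets the admissibility clock'': for $c=\text{cc}(L_{\beta})$ one has that $\omega_{n}^{\text{CK},c}$ is the $n$-th admissible ordinal strictly above $\beta$, whence $\omega_{\omega}^{\text{CK},c}$ is the $\omega$-th admissible above $\beta=\omega_{\omega(i+1)}^{\text{CK}}$, which is precisely $\omega_{\omega(i+1)+\omega}^{\text{CK}}=\omega_{\omega(i+2)}^{\text{CK}}$. Two points must be verified. First, $\beta$ is $c$-recursive (a bijection $\omega\leftrightarrow L_{\beta}$, hence a well-ordering of $\omega$ of type $\beta$, is read directly off $c$), so no ordinal $\le\beta$ is $c$-admissible and $\beta<\omega_{1}^{\text{CK},c}$. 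Second, since $c\in L_{\beta+1}$ we have $L_{\delta}[c]=L_{\delta}$ for every admissible $\delta>\beta$, by the standard absoluteness of the constructible hierarchy under a parameter already lying in $L$; consequently $L_{\delta}[c]$ is admissible exactly when $L_{\delta}$ is, so the $c$-admissibles above $\beta$ coincide with the true admissibles above $\beta$. Combining the two points identifies the $c$-admissibles with the true admissibles above $\beta$ and gives $\omega_{\omega}^{\text{CK},x'}=\omega_{\omega}^{\text{CK},c}=\omega_{\omega(i+2)}^{\text{CK}}$, completing the induction.

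Finally I would deduce the ``consequently'' clause. From $0^{(i)}_{\text{ITRM}}\in L_{\omega_{\omega}^{\text{CK},0^{(i)}}}=L_{\omega_{\omega(i+1)}^{\text{CK}}}$ we see that $n=i+1$ witnesses $0^{(i)}_{\text{ITRM}}\in L_{\omega_{\omega n}^{\text{CK}}}$. For minimality, suppose $i\ge 1$ and $0^{(i)}_{\text{ITRM}}\in L_{\omega_{\omega i}^{\text{CK}}}$. Using the already-proven identity $\omega_{\omega i}^{\text{CK}}=\omega_{\omega}^{\text{CK},0^{(i-1)}}$ and $L_{\omega_{\omega i}^{\text{CK}}}\subseteq L_{\omega_{\omega}^{\text{CK},0^{(i-1)}}}[0^{(i-1)}_{\text{ITRM}}]$, Theorem \ref{relITRM} would give $0^{(i)}_{\text{ITRM}}\leq_{\text{ITRM}}0^{(i-1)}_{\text{ITRM}}$ and hence, by monotonicity, $\omega_{\omega(i+1)}^{\text{CK}}=\omega_{\omega}^{\text{CK},0^{(i)}}\le\omega_{\omega}^{\text{CK},0^{(i-1)}}=\omega_{\omega i}^{\text{CK}}$, which is absurd. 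The case $i=0$ is immediate, since $0\in L_{\omega_{\omega}^{\text{CK}}}=L_{\omega_{\omega\cdot 1}^{\text{CK}}}$ and $n=1$ is the least admissible-indexed candidate. Thus $i+1$ is minimal, as required.
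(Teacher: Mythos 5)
Your proof is correct in substance but organized quite differently from the paper's. Both arguments share the same backbone: $0^{(i)}_{\text{ITRM}}$ is recognizable (via Corollary \ref{hpitrecog} resp.\ Lemma \ref{haltrecog}), hence lies in $L_{\sigma_i}$ with $\sigma_i:=\omega_{\omega}^{\text{CK},0^{(i)}_{\text{ITRM}}}$ by Theorem \ref{recogchar}, and one then inducts on $i$. From there the paper takes a shorter squeeze: since $L_{\sigma_i}[0^{(i)}_{\text{ITRM}}]=L_{\sigma_i}$ and halting computations in the oracle $0^{(i)}_{\text{ITRM}}$ are bounded by $\sigma_i$, the next jump is definable over $L_{\sigma_i}=L_{\omega_{\omega(i+1)}^{\text{CK}}}$, hence lies in $L_{\omega_{\omega(i+1)}^{\text{CK}}+1}\subseteq L_{\omega_{\omega(i+2)}^{\text{CK}}}$ but not in $L_{\omega_{\omega(i+1)}^{\text{CK}}}$; since $\sigma_{i+1}$ is a limit of admissibles strictly between $\omega_{\omega(i+1)}^{\text{CK}}$ and $\omega_{\omega(i+2)}^{\text{CK}}$ inclusive, it must equal the right endpoint. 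You instead prove the stronger intermediate fact $0^{(i+1)}_{\text{ITRM}}\equiv_{\text{ITRM}}\text{cc}(L_{\omega_{\omega(i+1)}^{\text{CK}}})$ and then compute the admissibles relative to that code directly; this is more work but yields a reusable structural statement (the iterated jumps are, up to ITRM-degree, the canonical codes of the levels $L_{\omega_{\omega(i+1)}^{\text{CK}}}$), which the paper only gets implicitly. One point to tighten: in your ``heart'' step you derive ``$L_{\delta}[c]$ is admissible exactly when $L_{\delta}$ is'' from ``$L_{\delta}[c]=L_{\delta}$ for every \emph{admissible} $\delta>\beta$'', but that only gives the inclusion of true admissibles above $\beta$ into the $c$-admissibles (which is what the upper bound needs). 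For the reverse inclusion you must also argue that $L_{\delta}[c]=L_{\delta}$ for every \emph{$c$-admissible} $\delta>\beta$; this follows because such $\delta$ are additively indecomposable and exceed $\beta+1$, so $L_{\delta}[c]\subseteq L_{(\beta+1)+\delta}=L_{\delta}$. Note that the paper's squeeze quietly relies on the same standard facts about relativized admissibility for constructible oracles, so neither route avoids them. Your treatment of the ``consequently'' clause is fine and matches what the paper leaves implicit.
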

\begin{proof}
For $i\in\omega$, let $\sigma_{i}$ denote $\omega_{\omega}^{\text{CK},0^{(i)}_{\text{ITRM}}}$.

It is easy to see that $0^{(i)}_{\text{ITRM}}$ is ITRM-computable from $0^{(i+1)}_{\text{ITRM}}$ for all $i\in\omega$. Hence, by Lemma \ref{haltrecog}, $0^{(i)}_{\text{ITRM}}$ is recognizable for each $i\in\omega$. By Theorem \ref{recogchar} then, 
$0^{(i)}_{\text{ITRM}}\in L_{\sigma_{i}}$ for each $i\in\omega$; thus
$L_{\sigma_{i}}[0^{(i)}_{\text{ITRM}}]=L_{\sigma_{i}}$. Hence a real $x$ is computable from $0^{(i)}_{\text{ITRM}}$ if and only if $x\in L_{\sigma_{i}}$.
As $0^{(i+1)}_{\text{ITRM}}$ is not ITRM-computable from $0^{(i)}_{\text{ITRM}}$, we must have $\omega_{\omega}^{\text{CK},0^{(i+1)}_{\text{ITRM}}}>\omega_{\omega}^{\text{CK},0^{(i)}_{\text{ITRM}}}$ for all $i\in\omega$;
hence $(\sigma_{i}|i\in\omega)$ is a strictly increasing sequence of limits of admissible ordinals.

We now proceed inductively to show that $\sigma_{i}=\omega_{\omega(i+1)}^{\text{CK}}$:

As $0^{\prime}_{\text{ITRM}}$ is not ITRM-computable, it is not an element of $L_{\omega_{\omega}^{\text{CK}}}$. As it is definable over $L_{\omega_{\omega}^{\text{CK}}}$, it is
an element of $L_{\omega_{\omega}^{\text{CK}}+1}$ and hence of $L_{\omega_{\omega2}^{\text{CK}}}$. Hence $\omega_{\omega}^{\text{CK}}<\sigma_{1}\leq\omega_{\omega2}^{\text{CK}}$; as $\sigma_{1}$ is a limit of admissibles,
we have $\sigma_{1}=\omega_{\omega2}^{\text{CK}}$.

Similarly, assuming that $\sigma_{i}=\omega_{\omega(i+1)}^{\text{CK}}$, $0^{(i+1)}_{\text{ITRM}}$ is definable over $L_{\omega_{\omega(i+1)}^{\text{CK}}}$ and hence an element of $L_{\omega_{\omega(i+2)}^{\text{CK}}}$.
Hence $\omega_{\omega(i+1)}^{\text{CK}}<\sigma_{i+1}\leq\omega_{\omega(i+2)}^{\text{CK}}$, so $\sigma_{i+1}=\omega_{\omega(i+2)}^{\text{CK}}$, as desired.
\end{proof}

We can now extend Theorem \ref{degreestructure} to obtain intermediate degrees also for iterations of the jump operator:

\begin{corollary}
 There is a real $x$ such that $0^{\prime}_{\text{ITRM}}<_{\text{ITRM}}x<_{\text{ITRM}}0^{\prime\prime}_{\text{ITRM}}$.
\end{corollary}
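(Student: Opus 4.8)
The plan is to relativize the Kleene--Post-style construction of Corollary~\ref{ITRMPost} (equivalently, the relevant instance of Theorem~\ref{degreestructure}) to the oracle $0^{\prime}_{\text{ITRM}}$, using Theorem~\ref{iteratedhp} to observe that, over $0^{\prime}_{\text{ITRM}}$, the real $0^{\prime\prime}_{\text{ITRM}}$ plays precisely the role that $0^{\prime}_{\text{ITRM}}$ played over $\emptyset$. Concretely, Theorem~\ref{iteratedhp} gives $\omega_{\omega}^{\text{CK},0^{\prime}_{\text{ITRM}}}=\omega_{\omega2}^{\text{CK}}$ and $\omega_{\omega}^{\text{CK},0^{\prime\prime}_{\text{ITRM}}}=\omega_{\omega3}^{\text{CK}}$, so that $\omega_{\omega}^{\text{CK},0^{\prime\prime}_{\text{ITRM}}}>\omega_{\omega}^{\text{CK},0^{\prime}_{\text{ITRM}}}$, and, since $0^{\prime}_{\text{ITRM}}$ is recognizable (Corollary~\ref{hpitrecog}), Theorem~\ref{recogchar} yields $L_{\omega_{\omega}^{\text{CK},0^{\prime}_{\text{ITRM}}}}[0^{\prime}_{\text{ITRM}}]=L_{\omega_{\omega2}^{\text{CK}}}$.

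First I would produce the intermediate real. Relativizing Lemma~\ref{randomnessandhalting} to $0^{\prime}_{\text{ITRM}}$, with $0^{\prime\prime}_{\text{ITRM}}=(0^{\prime}_{\text{ITRM}})^{\prime}_{\text{ITRM}}$ taking the role of the halting problem, I would choose $x\in L_{\omega_{\omega2}^{\text{CK}}+2}$ Cohen-generic over $L_{\omega_{\omega2}^{\text{CK}}+1}=L_{\omega_{\omega}^{\text{CK},0^{\prime}_{\text{ITRM}}}+1}[0^{\prime}_{\text{ITRM}}]$. Since $\omega_{\omega2}^{\text{CK}}+2<\omega_{\omega3}^{\text{CK}}=\omega_{\omega}^{\text{CK},0^{\prime\prime}_{\text{ITRM}}}$, Theorem~\ref{relITRM} gives $x\leq_{\text{ITRM}}0^{\prime\prime}_{\text{ITRM}}$, while the relativized form of Corollary~\ref{randomandcohengeneric} shows that $x$ is ITRM-c-random relative to $0^{\prime}_{\text{ITRM}}$. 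I then set $z:=0^{\prime}_{\text{ITRM}}\oplus x$.

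The bounds $0^{\prime}_{\text{ITRM}}\leq_{\text{ITRM}}z\leq_{\text{ITRM}}0^{\prime\prime}_{\text{ITRM}}$ are immediate. For strictness at the bottom, $z\leq_{\text{ITRM}}0^{\prime}_{\text{ITRM}}$ would force $x\leq_{\text{ITRM}}0^{\prime}_{\text{ITRM}}$, i.e. $x\in L_{\omega_{\omega2}^{\text{CK}}}$ by Theorem~\ref{relITRM}; but a real Cohen-generic over $L_{\omega_{\omega2}^{\text{CK}}}$ is not an element of that model, so $0^{\prime}_{\text{ITRM}}<_{\text{ITRM}}z$. For strictness at the top I argue by contradiction: suppose $0^{\prime\prime}_{\text{ITRM}}\leq_{\text{ITRM}}z=0^{\prime}_{\text{ITRM}}\oplus x$. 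By Corollary~\ref{hpitrecog}, $0^{\prime\prime}_{\text{ITRM}}$ is recognizable, hence a fortiori recognizable relative to $0^{\prime}_{\text{ITRM}}$. Applying the relativization to the oracle $0^{\prime}_{\text{ITRM}}$ of Theorem~\ref{weakrandomreducibility}, to the recognizable real $0^{\prime\prime}_{\text{ITRM}}$ and the relatively-c-random real $x$, I would conclude that $0^{\prime\prime}_{\text{ITRM}}$ is computable relative to $0^{\prime}_{\text{ITRM}}$, i.e. $0^{\prime\prime}_{\text{ITRM}}\leq_{\text{ITRM}}0^{\prime}_{\text{ITRM}}$. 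This contradicts the fact, established inside the proof of Theorem~\ref{iteratedhp}, that $0^{\prime\prime}_{\text{ITRM}}$ is not ITRM-computable from $0^{\prime}_{\text{ITRM}}$; hence $z<_{\text{ITRM}}0^{\prime\prime}_{\text{ITRM}}$.

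The routine part is checking that Lemma~\ref{randomnessandhalting}, Corollary~\ref{randomandcohengeneric} and Theorem~\ref{weakrandomreducibility} all relativize verbatim to an arbitrary recognizable oracle (the proof of Theorem~\ref{weakrandomreducibility}, for instance, uses only a recognizer, a halting-problem solver, and the meagerness of $\{w\mid 0^{\prime\prime}_{\text{ITRM}}\leq_{\text{ITRM}}0^{\prime}_{\text{ITRM}}\oplus w\}$ supplied by a relativized Lemma~\ref{compandgen}). The one point that deserves care, and which I expect to be the main obstacle, is the admissible-ordinal bookkeeping furnished by Theorem~\ref{iteratedhp}: it is this that simultaneously places the generic real $x$ strictly below $0^{\prime\prime}_{\text{ITRM}}$ (via $\omega_{\omega2}^{\text{CK}}+2<\omega_{\omega3}^{\text{CK}}$) yet outside $L_{\omega_{\omega2}^{\text{CK}}}$, and guarantees $0^{\prime\prime}_{\text{ITRM}}\not\leq_{\text{ITRM}}0^{\prime}_{\text{ITRM}}$. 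Once these ordinal identities and the recognizability of $0^{\prime\prime}_{\text{ITRM}}$ relative to $0^{\prime}_{\text{ITRM}}$ are in place, the relativized weak-random-reducibility argument closes the gap at the top and the genericity argument closes it at the bottom.
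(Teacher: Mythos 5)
Your construction of the witness is the same as the paper's --- a real $x\in L_{\omega_{\omega3}^{\text{CK}}}\setminus L_{\omega_{\omega2}^{\text{CK}}}$ that is Cohen-generic over $L_{\omega_{\omega2}^{\text{CK}}}=L_{\omega_{\omega}^{\text{CK},0^{\prime}_{\text{ITRM}}}}[0^{\prime}_{\text{ITRM}}]$, joined with $0^{\prime}_{\text{ITRM}}$ --- and your argument for strictness at the bottom (genericity implies $x\notin L_{\omega_{\omega2}^{\text{CK}}}$, then Theorem~\ref{relITRM}) coincides with the paper's. Where you genuinely diverge is the strictness at the top. The paper argues via admissible-ordinal bookkeeping: since $x$ is generic over $L_{\omega_{\omega}^{\text{CK},0^{\prime}_{\text{ITRM}}}}[0^{\prime}_{\text{ITRM}}]$, a relativized form of Corollary~\ref{Admpreserv} gives $\omega_{\omega}^{\text{CK},x\oplus 0^{\prime}_{\text{ITRM}}}=\omega_{\omega2}^{\text{CK}}<\omega_{\omega3}^{\text{CK}}=\omega_{\omega}^{\text{CK},0^{\prime\prime}_{\text{ITRM}}}$, so $0^{\prime\prime}_{\text{ITRM}}$ computes a code for $\omega_{\omega3}^{\text{CK}}$ while $x\oplus 0^{\prime}_{\text{ITRM}}$ does not. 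You instead invoke a relativization of Theorem~\ref{weakrandomreducibility}: $0^{\prime\prime}_{\text{ITRM}}$ is recognizable (Corollary~\ref{hpitrecog}), $x$ is ITRM-c-random relative to $0^{\prime}_{\text{ITRM}}$ (relativized Corollary~\ref{randomandcohengeneric}), so $0^{\prime\prime}_{\text{ITRM}}\leq_{\text{ITRM}}0^{\prime}_{\text{ITRM}}\oplus x$ would force $0^{\prime\prime}_{\text{ITRM}}\leq_{\text{ITRM}}0^{\prime}_{\text{ITRM}}$, a contradiction. Both routes are sound and both require relativizing results the paper states only over $\emptyset$ (the paper relativizes Corollary~\ref{Admpreserv}, you relativize Theorem~\ref{weakrandomreducibility} and hence Lemma~\ref{compandgen}); your sketch of why those relativizations go through is adequate. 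Your approach has the conceptual advantage of explaining the phenomenon via the recognizability of the iterated jump rather than a numerical comparison of admissibles, and it generalizes more transparently to Corollary~\ref{intermediatedegrees}; the paper's is more self-contained in that it needs no relativized version of the recognizability machinery. One cosmetic point: you take $x$ generic over $L_{\omega_{\omega2}^{\text{CK}}+1}$, which is more than Corollary~\ref{randomandcohengeneric} requires (genericity over $L_{\omega_{\omega2}^{\text{CK}}}$ suffices) and slightly complicates the claim that such an $x$ already appears in $L_{\omega_{\omega2}^{\text{CK}}+2}$; since all you need is $x\in L_{\omega_{\omega3}^{\text{CK}}}$, this is harmless.
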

\begin{proof}
 Let $x\in L_{\omega_{\omega3}^{\text{CK}}}-L_{\omega_{\omega2}^{\text{CK}}}$ be Cohen-generic over $L_{\omega_{\omega2}^{\text{CK}}}$ (and hence ITRM-c-random). (Such a real exists because, at $\omega_{\omega2}^{\text{CK}}+1$, the ultimate projectum drops to $\omega$,
so that already $L_{\omega_{\omega2}^{\text{CK}}+2}$ contains a real Cohen-generic over $L_{\omega_{\omega2}^{\text{CK}}}$; this real is certainly Cohen-generic over $L_{\omega_{\omega}^{\text{CK}}}$ and hence ITRM-c-random
by Theorem \ref{randomandcohengeneric}, and cannot be an element of $L_{\omega_{\omega2}^{\text{CK}}}$.)

So $x\in L_{\omega_{\omega3}^{\text{CK}}}=L_{\omega_{\omega}^{\text{CK},0^{\prime\prime}_{\text{ITRM}}}}[0^{\prime\prime}_{\text{ITRM}}]$ by Lemma \ref{haltrecog} and Corollary \ref{hpitrecog}, hence $x\leq_{\text{ITRM}}0^{\prime\prime}_{\text{ITRM}}$. 
But we cannot have
$x\in L_{\omega_{\omega}^{\text{CK},0^{\prime}_{\text{ITRM}}}}[0^{\prime}_{\text{ITRM}}]=L_{\omega_{\omega}^{\text{CK},0^{\prime}_{\text{ITRM}}}}$ since $L_{\omega_{\omega}^{\text{CK},0^{\prime}_{\text{ITRM}}}}=L_{\omega_{\omega2}^{\text{CK}}}$ by
Theorem \ref{iteratedhp}. Consequently, we have $x\leq 0_{\text{ITRM}}^{\prime\prime}$ and $x\not\leq0^{\prime}_{\text{ITRM}}$. 

As $x$ is Cohen-generic over $L_{\omega_{\omega2}^{\text{CK}}}$ and $L_{\omega_{\omega2}^{\text{CK}}}=L_{\omega_{\omega}^{\text{CK},0^{\prime}_{\text{ITRM}}}}[0^{\prime}_{\text{ITRM}}]$ by Theorem \ref{iteratedhp},
we have 
$\omega_{\omega}^{\text{CK},x\oplus 0^{\prime}_{\text{ITRM}}}=\omega_{\omega}^{\text{CK},0^{\prime}_{\text{ITRM}}}=\omega_{\omega2}^{\text{CK}}<\omega_{\omega3}^{\text{CK}}=\omega_{\omega}^{\text{CK},0^{\prime\prime}_{\text{ITRM}}}$. In particular,
there is a real coding a well-ordering of order type $\omega_{\omega3}^{\text{CK}}$ ITRM-computable from $0^{\prime\prime}_{\text{ITRM}}$, but not from $x\oplus 0^{\prime}_{\text{ITRM}}$. Hence
$0^{\prime\prime}_{\text{ITRM}}$ cannot be ITRM-computable from $x\oplus 0^{\prime}_{\text{ITRM}}$, so that $x\oplus0^{\prime}_{\text{ITRM}}\leq_{\text{ITRM}}0^{\prime\prime}_{\text{ITRM}}$ implies
 $x\oplus0^{\prime}_{\text{ITRM}}<_{\text{ITRM}}0^{\prime\prime}_{\text{ITRM}}$.

Hence $0^{\prime}_{\text{ITRM}}<_{\text{ITRM}}x\oplus 0^{\prime}_{\text{ITRM}}<_{\text{ITRM}}0_{\text{ITRM}}^{\prime\prime}$, so
$y:=x\oplus 0^{\prime}_{\text{ITRM}}$ is as desired.
\end{proof}

By a similar argument, one gets intermediate degrees between any two successive (finite) iterations of the ITRM-jump:

\begin{corollary}{\label{intermediatedegrees}}
 For every $i\in\omega$, there is a real $x_{i}$ such that $0^{(i)}_{\text{ITRM}}<_{\text{ITRM}}x<0^{(i+1)}_{\text{ITRM}}$.
\end{corollary}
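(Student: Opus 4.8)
The plan is to mimic exactly the argument used in the preceding corollary (the case $i=1$), merely shifting every index upward by $i$. The previous proof hinged on three facts, each of which has a direct analogue at the $i$-th level thanks to Theorem \ref{iteratedhp}: namely $L_{\omega_{\omega}^{\text{CK},0^{(i)}_{\text{ITRM}}}}[0^{(i)}_{\text{ITRM}}]=L_{\omega_{\omega(i+1)}^{\text{CK}}}$ and $L_{\omega_{\omega}^{\text{CK},0^{(i+1)}_{\text{ITRM}}}}[0^{(i+1)}_{\text{ITRM}}]=L_{\omega_{\omega(i+2)}^{\text{CK}}}$, with $\omega_{\omega(i+1)}^{\text{CK}}<\omega_{\omega(i+2)}^{\text{CK}}$ both limits of admissibles.

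First I would pick a real $x\in L_{\omega_{\omega(i+2)}^{\text{CK}}}\setminus L_{\omega_{\omega(i+1)}^{\text{CK}}}$ that is Cohen-generic over $L_{\omega_{\omega(i+1)}^{\text{CK}}}$; such a real exists because at $\omega_{\omega(i+1)}^{\text{CK}}+1$ the projectum drops to $\omega$, so $L_{\omega_{\omega(i+1)}^{\text{CK}}+2}$ already contains a real Cohen-generic over $L_{\omega_{\omega(i+1)}^{\text{CK}}}$. Such an $x$ is a fortiori Cohen-generic over $L_{\omega_{\omega}^{\text{CK}}}$ and hence ITRM-c-random by Corollary \ref{randomandcohengeneric}. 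Since $0^{(i)}_{\text{ITRM}}$ and $0^{(i+1)}_{\text{ITRM}}$ are recognizable (Lemma \ref{haltrecog}, Corollary \ref{hpitrecog}), Theorem \ref{recogchar} gives the two $L$-level identities above, so that $x\in L_{\omega_{\omega(i+2)}^{\text{CK}}}=L_{\omega_{\omega}^{\text{CK},0^{(i+1)}_{\text{ITRM}}}}[0^{(i+1)}_{\text{ITRM}}]$ yields $x\leq_{\text{ITRM}}0^{(i+1)}_{\text{ITRM}}$ by Theorem \ref{relITRM}, while $x\notin L_{\omega_{\omega(i+1)}^{\text{CK}}}=L_{\omega_{\omega}^{\text{CK},0^{(i)}_{\text{ITRM}}}}[0^{(i)}_{\text{ITRM}}]$ gives $x\not\leq_{\text{ITRM}}0^{(i)}_{\text{ITRM}}$.

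Next I would set $y:=x\oplus 0^{(i)}_{\text{ITRM}}$ and verify the strict inequalities. That $0^{(i)}_{\text{ITRM}}<_{\text{ITRM}}y$ is immediate, with properness following from $x\not\leq_{\text{ITRM}}0^{(i)}_{\text{ITRM}}$. For the upper bound, the key computation is that, since $x$ is Cohen-generic over $L_{\omega_{\omega(i+1)}^{\text{CK}}}=L_{\omega_{\omega}^{\text{CK},0^{(i)}_{\text{ITRM}}}}[0^{(i)}_{\text{ITRM}}]$, one has $\omega_{\omega}^{\text{CK},y}=\omega_{\omega}^{\text{CK},0^{(i)}_{\text{ITRM}}}=\omega_{\omega(i+1)}^{\text{CK}}<\omega_{\omega(i+2)}^{\text{CK}}=\omega_{\omega}^{\text{CK},0^{(i+1)}_{\text{ITRM}}}$, by admissibility-preservation for generic reals. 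This admissibility gap then forces $0^{(i+1)}_{\text{ITRM}}\not\leq_{\text{ITRM}}y$, because a code for a well-ordering of type $\omega_{\omega(i+2)}^{\text{CK}}$ is computable from $0^{(i+1)}_{\text{ITRM}}$ but not from $y$; combined with $y\leq_{\text{ITRM}}0^{(i+1)}_{\text{ITRM}}$ this yields $y<_{\text{ITRM}}0^{(i+1)}_{\text{ITRM}}$.

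The only genuine subtlety—the analogue of ``the hard part''—is the admissibility-preservation claim $\omega_{\omega}^{\text{CK},y}=\omega_{\omega(i+1)}^{\text{CK}}$, i.e. that adjoining a generic $x$ to $0^{(i)}_{\text{ITRM}}$ does not push up the admissible ordinals. For $i=1$ the previous corollary invoked this directly; in general it follows the same way, since genericity of $x$ over $L_{\omega_{\omega(i+1)}^{\text{CK}}}$ together with the forcing theorem over provident sets (as in Corollary \ref{Admpreserv} and Lemma \ref{compandgen}) ensures that generic extensions preserve admissibility at this level. I would remark that everything else is a routine transcription of the $i=1$ argument with $\omega2,\omega3$ replaced by $\omega(i+1),\omega(i+2)$, so no further detail is required.
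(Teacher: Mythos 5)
Your proof is correct and is essentially the argument the paper intends: the paper gives no separate proof of Corollary \ref{intermediatedegrees} beyond the remark ``by a similar argument,'' and your write-up is exactly the index-shifted version of the preceding corollary's proof, using Theorem \ref{iteratedhp} to identify $L_{\omega_{\omega}^{\text{CK},0^{(i)}_{\text{ITRM}}}}[0^{(i)}_{\text{ITRM}}]$ with $L_{\omega_{\omega(i+1)}^{\text{CK}}}$ and taking $y=x\oplus 0^{(i)}_{\text{ITRM}}$ for a generic $x$. (You even reproduce the paper's harmless imprecision about a well-ordering of type exactly $\omega_{\omega(i+2)}^{\text{CK}}$ being computable from $0^{(i+1)}_{\text{ITRM}}$; any order type strictly between $\omega_{\omega(i+1)}^{\text{CK}}$ and $\omega_{\omega(i+2)}^{\text{CK}}$ is what is actually needed and available.)
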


Given Corollary \ref{intermediatedegrees}, one may ask whether any of these intermediate degrees is in some natural sense `unique'. As above (see the passage preceeding Definition \ref{recogdef}), a way to capture the meaning of `unique' that suggests
itself in the context of ITRMs is ITRM-recognizability. It turns out that, in this interpretation, no intermediate degree in the finite iterations of the ITRM-jump is `unique':

\begin{thm}{\label{finiteintermediaterecog}}
 Assume that $x\leq_{\text{ITRM}}0_{\text{ITRM}}^{(i)}$ for some $i\in\omega$ and that $x\in$ RECOG. Then there is $j\in\omega$ such that $x\in [0_{\text{ITRM}}^{(j)}]_{\text{ITRM}}$.
\end{thm}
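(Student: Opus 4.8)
The plan is to pin down the ordinal $\omega_{\omega}^{\text{CK},x}$ and show that it coincides with one of the ordinals $\sigma_{j}=\omega_{\omega(j+1)}^{\text{CK}}$ occurring in Theorem \ref{iteratedhp}, after which the degree identity $x\equiv_{\text{ITRM}}0^{(j)}_{\text{ITRM}}$ falls out of the two characterizations of ITRM-computability already at hand. So the real content is an ordinal computation.

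First I would record the two ``$L$-segment'' descriptions of the relevant lower cones. Since $x$ is recognizable, Theorem \ref{recogchar} gives $x\in L_{\omega_{\omega}^{\text{CK},x}}$, whence $L_{\omega_{\omega}^{\text{CK},x}}[x]=L_{\omega_{\omega}^{\text{CK},x}}$; combined with Theorem \ref{relITRM} this says that a real is ITRM-computable from $x$ exactly when it lies in $L_{\omega_{\omega}^{\text{CK},x}}$. On the other side, the proof of Theorem \ref{iteratedhp} (using that each $0^{(j)}_{\text{ITRM}}$ is recognizable, so $L_{\sigma_{j}}[0^{(j)}_{\text{ITRM}}]=L_{\sigma_{j}}$) shows that a real is ITRM-computable from $0^{(j)}_{\text{ITRM}}$ exactly when it lies in $L_{\sigma_{j}}$, where $\sigma_{j}=\omega_{\omega(j+1)}^{\text{CK}}$ is a limit of admissibles. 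Thus both lower cones have the form $L_{\tau}\cap{}^{\omega}2$, and the whole problem reduces to showing that $\tau:=\omega_{\omega}^{\text{CK},x}$ is itself a limit of admissible ordinals: for then $\tau=\omega_{\omega m}^{\text{CK}}$ for some $m\geq1$, and setting $j:=m-1$ gives $L_{\omega_{\omega}^{\text{CK},x}}=L_{\sigma_{j}}$, so the two lower cones coincide and hence $x\equiv_{\text{ITRM}}0^{(j)}_{\text{ITRM}}$.

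To see that $\tau=\omega_{\omega}^{\text{CK},x}$ is a limit of admissibles, I would first use the hypothesis $x\leq_{\text{ITRM}}0^{(i)}_{\text{ITRM}}$ to get $x\in L_{\sigma_{i}}=L_{\omega_{\omega(i+1)}^{\text{CK}}}$, so $x$ first appears at some level $\gamma<\omega_{\omega(i+1)}^{\text{CK}}$; note also $\gamma<\tau$, since $x\in L_{\tau}$. The key structural fact is that, above the appearance level $\gamma$, the $x$-admissible ordinals coincide with the plain admissible ordinals: if $\beta>\gamma$ is admissible then $x\in L_{\beta}$ and $L_{\beta}[x]=L_{\beta}$ (as $L_{\beta}$ is already an admissible set containing $x$), so $\beta$ is $x$-admissible; conversely an $x$-admissible $\beta>\gamma$ is admissible, since $L_{\beta}[x]=L_{\beta}\models\mathrm{KP}$. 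Now $\tau$, being the supremum of the first $\omega$ many $x$-admissibles, is a limit of $x$-admissibles, and cofinally many of these lie above $\gamma$ and are therefore genuine admissibles; hence $\tau$ is a limit of admissibles, as required. The upper bound $\tau\leq\omega_{\omega(i+1)}^{\text{CK}}$ (guaranteeing $m\leq i+1$, so that $j\leq i<\omega$) follows from the same coincidence, since the $\omega$ many admissibles in the interval $(\gamma,\omega_{\omega(i+1)}^{\text{CK}})$ are all $x$-admissible.

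The main obstacle I anticipate is the load-bearing equality $L_{\beta}[x]=L_{\beta}$ for admissible $\beta>\gamma$, i.e.\ that passing to the $x$-relative constructible hierarchy creates no new sets once $x$ has been reached and $\beta$ is admissible. One must be careful here, since for small $\beta>\gamma$ the relativized hierarchy can genuinely outrun $L_{\beta}$, so admissibility (or at least being a limit of admissibles, which is exactly the situation at $\tau$) is essential; I would cite this from the standard theory of relativized constructibility rather than verify it by hand. Everything else is bookkeeping with the ordinals $\omega_{\omega m}^{\text{CK}}$ and the two computability characterizations. Finally, I would separately dispatch the degenerate case $x\in L_{\omega_{\omega}^{\text{CK}}}$ (equivalently, $x$ ITRM-computable), where $\tau=\omega_{\omega}^{\text{CK}}$ and $j=0$, so that $x\equiv_{\text{ITRM}}0=0^{(0)}_{\text{ITRM}}$.
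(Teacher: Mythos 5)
Your proof is correct, and it rests on the same two pillars as the paper's argument --- Theorem \ref{recogchar} (recognizability forces $x\in L_{\omega_{\omega}^{\text{CK},x}}$) and the identification of lower cones with $L$-segments at limits of admissibles via Theorems \ref{relITRM} and \ref{iteratedhp} --- but it is organized differently. The paper first proves a step-up claim: if a recognizable $x$ satisfies $x>_{\text{ITRM}}0^{(i)}_{\text{ITRM}}$, then $x\notin L_{\omega_{\omega(i+1)}^{\text{CK}}}$ while $x\in L_{\omega_{\omega}^{\text{CK},x}}$, so $\omega_{\omega}^{\text{CK},x}$ is a limit of admissibles exceeding $\omega_{\omega(i+1)}^{\text{CK}}$, hence at least $\omega_{\omega(i+2)}^{\text{CK}}$, giving $0^{(i+1)}_{\text{ITRM}}\in L_{\omega_{\omega}^{\text{CK},x}}[x]$ and thus $0^{(i+1)}_{\text{ITRM}}\leq_{\text{ITRM}}x$; it then locates $x$ by taking the minimal $j$ with $x\not>_{\text{ITRM}}0^{(j)}_{\text{ITRM}}$. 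You instead pin down $\omega_{\omega}^{\text{CK},x}$ outright as some $\omega_{\omega(j+1)}^{\text{CK}}$ and match the two lower cones, which yields the slightly sharper statement that the degree of a recognizable $x\leq_{\text{ITRM}}0^{(i)}_{\text{ITRM}}$ is determined by the single ordinal $\omega_{\omega}^{\text{CK},x}$. The price is that you must make explicit two facts about relativized constructibility (that above the level where $x$ first appears the $x$-admissibles coincide with the admissibles, and that $L_{\beta}[x]=L_{\beta}$ there); these are standard (the downward direction is the truncation lemma, the upward direction is $\Sigma_1$-recursion inside $L_{\beta}$), and the paper in fact relies on the same facts implicitly, both in asserting that $\omega_{\omega}^{\text{CK},x}$ is a limit of admissibles and in using $L_{\sigma_{i}}[0^{(i)}_{\text{ITRM}}]=L_{\sigma_{i}}$ in Theorem \ref{iteratedhp}, so nothing genuinely new is being assumed. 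One small step worth writing out: equality of the lower cones gives $x\equiv_{\text{ITRM}}0^{(j)}_{\text{ITRM}}$ because each real belongs to its own lower cone.
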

\begin{proof}
We start by showing: If $x>_{\text{ITRM}}0_{\text{ITRM}}^{(i)}$ is recognizable, then $x\geq_{\text{ITRM}}0_{\text{ITRM}}^{(i+1)}$. To see this, note that $x>0_{\text{ITRM}}^{(i)}$ implies
that $x\notin L_{\omega_{\omega(i+1)}^{\text{CK}}}$ by Theorem \ref{iteratedhp}. 
 As $x$ is recognizable, we have $x\in L_{\omega_{\omega}^{\text{CK},x}}$ by
Theorem \ref{recogchar}. Hence $\omega_{\omega}^{\text{CK},x}$ is a limit of admissibles strictly bigger than $\omega_{\omega(i+1)}^{\text{CK}}$,
so $\omega_{\omega}^{\text{CK},x}\geq\omega_{\omega(i+2)}^{\text{CK}}$. As $0_{\text{ITRM}}^{(i+1)}\in L_{\omega_{\omega(i+2)}^{\text{CK}}}$,
we have $0_{\text{ITRM}}^{(i+1)}\in L_{\omega_{\omega}^{\text{CK},x}}[x]$, and hence $0_{\text{ITRM}}^{(i+1)}\leq x$.
Now assume for a contradiction that $x$ is as in the assumption of the theorem, but not ITRM-computably equivalent to some $0^{(i)}_{\text{ITRM}}$. In particular then, $x$ is not ITRM-computable (otherwise $x\in [0_{\text{ITRM}}^{(0)}]_{\text{ITRM}}$).
So $x>_{\text{ITRM}}0_{\text{ITRM}}^{(0)}$. As $x\leq_{\text{ITRM}}0^{(i)}_{\text{ITRM}}$ for some $i\in\omega$, there is some $j\in\omega$ such that $x\not>_{\text{ITRM}}0^{(j)}$; without loss of generality, let $j$ be minimal with this property.
Then $j>0$ and $x>_{\text{ITRM}}0^{(j-1)}_{\text{ITRM}}$, so $x\geq_{\text{ITRM}}0^{(j)}_{\text{ITRM}}$. As $x>_{\text{ITRM}}0^{(j)}_{\text{ITRM}}$ is excluded, we have $x=_{\text{ITRM}}0^{(j)}_{\text{ITRM}}$, so $j$ is as desired.
\end{proof}

We can, however, also find reals $x$ with degree strictly between $0$ and $0_{\text{ITRM}}^{\prime}$ that are not random. This will, after some preparation, be shown in Theorem \ref{nonrandomintermediate} below.

\begin{defini}
 For $x,y\subseteq\omega$, we write $x\leq_{\text{ITRM}}^{n}y$ to indicate that $x$ is computable from $y$ by an ITRM-program using at most $n$ registers.
\end{defini}

\begin{lemma}{\label{boundingreduction}}
There is a natural number $C$ such that, for all reals $x,y$ with $x\leq_{\text{ITRM}}^{n}y$, we have $\omega_{k}^{CK,x}\leq\omega_{Ck+n+1}^{\text{CK},y}$ for all $k\in\omega$.
\end{lemma}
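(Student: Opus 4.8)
The plan is to prove the lemma by a direct simulation argument: I would convert ITRM-computations in the oracle $x$ into computations in the oracle $y$ at the cost of a bounded number of extra registers, and then read off the bound on admissibles from the halting-time estimates of Theorem \ref{hp}. Fix a program $Q$ using $n$ registers with $Q^{y}\downarrow=x$, which exists by the hypothesis $x\leq_{\text{ITRM}}^{n}y$; by Theorem \ref{hp} we may assume $Q^{y}(i)\downarrow$ for every $i\in\omega$. The key idea is that any computation in the oracle $x$ can be re-enacted in the oracle $y$ by intercepting each access to the oracle $x$ and, rather than querying $x$ directly, running the subroutine $Q^{y}$ on the queried index to recover the relevant bit of $x$.

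Concretely, given any program $P$ using $m$ registers, I would build a program $\tilde{P}$ simulating $P^{x}$ step by step: steps of $P$ not involving the oracle are copied verbatim, while each oracle query of $P$ is replaced by a call to $Q$ on a separate block of $n$ registers. This costs only a constant number $c$ of bookkeeping registers beyond the $m$ registers of $P$ and the $n$ registers of $Q$, so $\tilde{P}$ uses at most $m+n+c$ registers, with $c$ an absolute constant independent of $m$, $n$, $x$, $y$. The simulation is faithful, so $\tilde{P}^{y}\downarrow$ if and only if $P^{x}\downarrow$; moreover, since each step of $P^{x}$ is expanded into a nonempty (possibly transfinite) block of steps of $\tilde{P}^{y}$, there is an order-preserving map from the run of $P^{x}$ into the run of $\tilde{P}^{y}$, whence the halting time of $P^{x}$ is at most that of $\tilde{P}^{y}$.

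To conclude, I would use the sharp form of Theorem \ref{hp}: for every real $z$, the ordinal $\omega_{k}^{\text{CK},z}$ is exactly the supremum of the halting times of halting $(k-1)$-register programs in the oracle $z$ (the upper bound is Theorem \ref{hp}; the matching cofinality is part of the analysis underlying it). Applying this to $z=x$, every halting $(k-1)$-register program $P^{x}$ halts at some time below $\omega_{k+n+c}^{\text{CK},y}$: its simulation $\tilde{P}^{y}$ lies in $\mathbb{P}_{k-1+n+c}$ and hence, again by Theorem \ref{hp}, halts in fewer than $\omega_{(k-1+n+c)+1}^{\text{CK},y}=\omega_{k+n+c}^{\text{CK},y}$ steps, and the halting time of $P^{x}$ is bounded by that of $\tilde{P}^{y}$. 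Taking the supremum over all such $P$ gives $\omega_{k}^{\text{CK},x}\leq\omega_{k+n+c}^{\text{CK},y}$. Choosing $C:=c$ then yields $k+n+c\leq Ck+n+1$ for all $k\geq 1$, so $\omega_{k}^{\text{CK},x}\leq\omega_{Ck+n+1}^{\text{CK},y}$; the case $k=0$ is immediate with the usual convention $\omega_{0}^{\text{CK},z}=\omega$, which does not depend on the oracle.

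The technically delicate step, and the one I expect to be the main obstacle, is verifying that the simulation remains faithful at limit stages while keeping the register overhead constant. At a limit time of the run of $P^{x}$ the contents of $P$'s registers are given by the inferior limit rule, with a reset to $0$ when that limit is infinite, and the simulated registers of $\tilde{P}$ must reproduce exactly this behaviour. This forces the $Q$-subroutine to leave its scratch registers reset to $0$ between calls, so that the liminf taken at limit stages over the whole run of $\tilde{P}^{y}$ restricts correctly to the liminf of the simulated block and the program line returns control to the main simulation. Arranging this with only $O(1)$ extra registers is the standard but fiddly part of the construction; everything else is bookkeeping.
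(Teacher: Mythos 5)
There is a genuine gap, and it is not in the simulation (which is fine, if fiddly) but in the step you yourself flag as unproblematic: the ``sharp form'' of Theorem \ref{hp}. Theorem \ref{hp} gives only the upper bound --- a halting $\mathbb{P}_{k-1}$-computation in the oracle $x$ stops before $\omega_{k}^{\text{CK},x}$ --- and nothing in the paper asserts the converse, namely that the halting times of $(k-1)$-register programs in the oracle $x$ are \emph{cofinal} in $\omega_{k}^{\text{CK},x}$. Your entire derivation of $\omega_{k}^{\text{CK},x}\leq\omega_{k+n+c}^{\text{CK},y}$ rests on taking a supremum over exactly this family, so without the cofinality claim the argument does not bound $\omega_{k}^{\text{CK},x}$ at all. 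The claim is also stronger than anything the paper is willing to use: the correspondence between register count and reachable admissibles is only controlled up to a multiplicative constant, which is precisely why the actual proof introduces a constant $c$ such that a code for $\omega_{k}^{\text{CK},z}$ is computable from $z$ with at most $kc$ registers (not $k-1$ or $k+O(1)$), and why the lemma is stated with $\omega_{Ck+n+1}^{\text{CK},y}$ rather than $\omega_{k+n+c}^{\text{CK},y}$. That your route yields a strictly stronger conclusion than the lemma with less machinery should itself have been a warning sign.

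The intended argument avoids halting times entirely on the $x$-side: compose the $n$-register reduction $Q$ with the $kc$-register program computing a code for $\omega_{k}^{\text{CK},x}$ from $x$, obtaining a code for $\omega_{k}^{\text{CK},x}$ computable from $y$ with at most $kc+n+d$ registers ($d$ an absolute overhead constant, playing the role of your $c$); by Theorem \ref{hp} and absoluteness this code lies in $L_{\omega_{Ck+n+1}^{\text{CK},y}}[y]$ with $C=c+d$, and a real in that level cannot code a well-ordering of type exceeding $\omega_{Ck+n+1}^{\text{CK},y}$. Your oracle-replacement simulation is essentially the same composition step and could be kept; to repair the proof, replace the unsupported cofinality claim by this ``compute a code for the ordinal and ask where it can live in $L[y]$'' argument.
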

\begin{proof}
 It is not hard to see that there is a constant $c$ such that, for every $0<k\in\omega$ and every real $z$, a real coding $\omega_{k}^{\text{CK},z}$ is (uniformly) ITRM-computable from $z$
by a program $P$ using at most $kc$ many registers. Now, if $x\leq_{\text{ITRM}}^{n}y$ via a program $Q$ using at most $n$ registers, we can, given $y$ in the oracle, first use $Q$
to compute $x$ and then $P$ to compute a code for $\omega_{k}^{\text{CK},x}$. Hence, for some constant $d$ (which is independent from $k,x,y,n$), we can compute a code for $\omega_{k}^{\text{CK},x}$
in the oracle $y$ using at most $kc+n+d$ many registers. With $C=c+d$, we hence get that such a code is computable from $y$ with at most $kC+n$ many registers.
However, every such real will be an element of $L_{\omega_{kC+n+1}^{\text{CK},y}}[y]$ and hence cannot code an ordinal greater than $\omega_{kC+n+1}^{\text{CK},y}$.
Thus $\omega_{k}^{\text{CK},x}\leq\omega_{Ck+n+1}^{\text{CK},y}$.
\end{proof}

We recall the following special case of Theorem $4.1$ from \cite{SW}: 

\begin{defini}{\label{index}}
An ordinal $\alpha<\omega_{1}^{L}$ is an index if and only if $L_{\alpha+1}-L_{\alpha}$ contains a real number. 
\end{defini}

\begin{thm}{\label{simpweit}}
Let $(\alpha_{\iota}|\iota<\delta)$ be a countable sequence of admissible ordinals greater than $\omega$ such that, for each $\nu<\delta$, $\alpha_{\nu}$
is admissible relative to $(\alpha_{\iota}|\iota<\nu)$. Then there is a real $x$ such that $\alpha_{\iota}$ is the $\iota$th $x$-admissible ordinal greater than $\omega$.
With $\alpha=\text{sup}\{\alpha_{\iota}|\iota<\delta\}$, such a real is arithmetically constructible from any real coding $(L_{\alpha+1}(\{\alpha_{\iota}|\iota<\delta\}),\in,\{\alpha_{\iota}|\iota<\delta\})$.
If $\alpha$ is an index and $(\alpha_{\iota}|\iota<\delta)$ is definable over $L_{\alpha}$, then such a real is contained in $L_{\alpha+2}$.
\end{thm}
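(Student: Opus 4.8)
The plan is to build the real $x$ by a coding construction --- equivalently, by forcing over $L_{\alpha}$ with a notion of forcing definable over $L_{\alpha}$ --- that simultaneously \emph{destroys} admissibility at every admissible $\beta<\alpha$ not appearing in the sequence and \emph{preserves} it at each $\alpha_{\iota}$. Concretely, I would first fix a real $w$ coding $(L_{\alpha+1}(\{\alpha_{\iota}\mid\iota<\delta\}),\in,\{\alpha_{\iota}\mid\iota<\delta\})$ and then construct $x$ by a transfinite recursion, guided by $w$, proceeding along the intervals $[\alpha_{\iota},\alpha_{\iota+1})$. The target is that the $\Sigma_{1}$-admissibility spectrum of $L[x]$ below $\alpha$ is exactly $\{\alpha_{\iota}\mid\iota<\delta\}$; once this is achieved, reading off that $\alpha_{\iota}$ is the $\iota$th $x$-admissible above $\omega$ is immediate.

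The construction rests on two opposing mechanisms. To \emph{kill} admissibility at an unwanted admissible $\beta$, I would arrange that $x$ encodes, cofinally in $\beta$, a map $f\colon\gamma\to\beta$ with $\gamma<\beta$ that is $\Sigma_{1}$ over $L_{\beta}[x]$, which witnesses non-admissibility; a convenient implementation is an almost-disjoint coding, fixing inside $L_{\alpha}$ an almost disjoint family so that the membership data recoverable from $x$ pins down such a collapsing map at each unwanted $\beta$ but at no $\alpha_{\iota}$. To \emph{preserve} admissibility at $\alpha_{\iota}$, I would keep the coding performed below $\alpha_{\iota}$ local, i.e. ensure that every requirement acting at an ordinal below $\alpha_{\iota}$ is decided by a condition bounded below $\alpha_{\iota}$, so that no new $\Sigma_{1}(L_{\alpha_{\iota}}[x])$ cofinal map is created; this is a distributivity property of the forcing restricted below $\alpha_{\iota}$. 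The hypothesis that each $\alpha_{\nu}$ is admissible relative to $(\alpha_{\iota}\mid\iota<\nu)$ is precisely what lets the bookkeeping pass through the limit stages $\nu$: it guarantees that the information accumulated below $\alpha_{\nu}$ is itself $\alpha_{\nu}$-recursively bounded, so the union of the conditions constructed below $\alpha_{\nu}$ does not cofinally collapse $\alpha_{\nu}$.

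For the complexity clauses, since all the requirements are uniformly definable over the structure coded by $w$, a suitable generic can be produced by a diagonalization that meets them in order using only finitely many Turing jumps of $w$, which yields the statement that $x$ is arithmetically constructible from any real coding the structure. For the final clause, if $\alpha$ is an index then $L_{\alpha+1}\setminus L_{\alpha}$ contains a real, so $L_{\alpha}$ is coded by a real in $L_{\alpha+1}$; if moreover $(\alpha_{\iota}\mid\iota<\delta)$ is definable over $L_{\alpha}$, the entire construction can be carried out definably over $L_{\alpha+1}$, and the arithmetic-in-a-code step then places $x$ in $L_{\alpha+2}$. The main obstacle, as is typical for such coding theorems, is the simultaneous preservation at the prescribed ordinals, and especially at limit $\nu<\delta$: I expect the hard part to be verifying that the killing requirements for the unwanted admissibles lying between the $\alpha_{\iota}$'s never conspire to produce a cofinal map definable over $L_{\alpha_{\nu}}[x]$, which is exactly where the relative-admissibility hypothesis must be invoked delicately, via a fusion or distributivity estimate.
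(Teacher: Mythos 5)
The paper does not actually prove this statement: it is quoted verbatim as a special case of Theorem $4.1$ of Simpson and Weitkamp \cite{SW}, and no argument is given in the text. So the only basis for comparison is whether your outline would stand on its own as a proof, and as written it does not. What you have produced is a correct identification of the standard strategy from the literature (a coding/forcing construction over $L_{\alpha}$ that kills admissibility at the unwanted admissibles below $\alpha$ while preserving it at each $\alpha_{\iota}$, in the tradition of Sacks's theorem on countable admissible ordinals), together with an explicit deferral of every point where the mathematics actually happens. The preservation of admissibility at each $\alpha_{\iota}$ --- the distributivity or fusion estimate, and in particular its behaviour at limit stages $\nu<\delta$, where the relative-admissibility hypothesis must be invoked --- is the entire content of the theorem, and you state yourself that you ``expect the hard part to be verifying'' exactly this. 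Likewise the killing mechanism is only gestured at: you do not specify the almost-disjoint family, the conditions, or why genericity produces a $\Sigma_{1}(L_{\beta}[x])$ cofinal map at every unwanted admissible $\beta$ but at no $\alpha_{\iota}$. A proof sketch whose two load-bearing lemmas are both left as expectations is a plan, not a proof.

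The two complexity clauses are also asserted rather than derived. That a sufficiently generic filter can be found ``using only finitely many Turing jumps of $w$'' depends on the definability of the forcing relation over the coded structure and on how the dense sets are enumerated; this needs an argument, since meeting countably many dense sets in order does not automatically cost only finitely many jumps. The final clause is the most nearly complete part of your sketch: if $\alpha$ is an index then $L_{\alpha+1}$ contains a real coding $L_{\alpha}$ by the usual fine-structural projectum argument, and an arithmetic-in-that-code construction lands in $L_{\alpha+2}$; but this still rests on the unproved arithmetic bound. Since the theorem is imported from \cite{SW} precisely so that the present paper need not carry out this construction, the honest conclusion is that your proposal correctly names the right technique but leaves a genuine gap at the preservation step, which is where all the difficulty of Simpson and Weitkamp's proof lies.
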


\begin{thm}{\label{nonrandomintermediate}}
 There is a real $0<_{\text{ITRM}}x<_{\text{ITRM}}0^{\prime}_{\text{ITRM}}$ such that $x$ is not ITRM-c-random; in fact, $x$ can be chosen in such a way that $x$ is not ITRM-reducible to any ITRM-c-random real $y$.
\end{thm}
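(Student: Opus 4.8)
The plan is to build $x$ by prescribing its admissibility spectrum directly, arranging the $x$-admissibles to grow \emph{super-linearly} relative to the boldface admissibles while keeping their supremum pinned exactly at $\omega_{\omega}^{\text{CK}}$. Fix the constant $C$ from Lemma \ref{boundingreduction} and choose a strictly increasing $g\colon\omega\to\omega$ with $g(k)\geq k$, with $\{g(k)\mid k\in\omega\}$ cofinal in $\omega$, and growing fast enough that for every $d\in\omega$ there is a $k$ with $g(k)>Ck+d$; a quadratic such as $g(k)=(k+1)^2$ works, and moreover has $g(k)>k$ for small $k$. I would then apply Theorem \ref{simpweit} to the sequence $\alpha_k:=\omega_{g(k)}^{\text{CK}}$: each $\alpha_\nu$ is admissible relative to $(\alpha_\iota\mid\iota<\nu)$ trivially, since these are ordinals lying below $\alpha_\nu$ and hence already in $L_{\alpha_\nu}$. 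This yields a real $x$ whose $k$-th admissible is $\alpha_k$, i.e. $\omega_k^{\text{CK},x}=\omega_{g(k)}^{\text{CK}}$ for all $k$.

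Next I would locate $x$ strictly below $0^{\prime}_{\text{ITRM}}$. As $\{g(k)\}$ is cofinal in $\omega$, the supremum $\alpha:=\sup_k\alpha_k$ equals $\omega_{\omega}^{\text{CK}}$, and $\alpha$ is an index because $0^{\prime}_{\text{ITRM}}$, being definable over $L_{\omega_{\omega}^{\text{CK}}}$ but not ITRM-computable, lies in $L_{\omega_{\omega}^{\text{CK}}+1}\setminus L_{\omega_{\omega}^{\text{CK}}}$. The sequence $(\omega_{g(k)}^{\text{CK}})_k$ is definable over $L_{\omega_{\omega}^{\text{CK}}}$, so the last clause of Theorem \ref{simpweit} places $x\in L_{\omega_{\omega}^{\text{CK}}+2}\subseteq L_{\omega_{\omega2}^{\text{CK}}}$. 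Since $0^{\prime}_{\text{ITRM}}$ is recognizable, Theorem \ref{iteratedhp} gives $L_{\omega_{\omega2}^{\text{CK}}}=L_{\omega_{\omega}^{\text{CK},0^{\prime}}}[0^{\prime}_{\text{ITRM}}]$, whence $x\leq_{\text{ITRM}}0^{\prime}_{\text{ITRM}}$ by Theorem \ref{relITRM}. The reduction is strict: were $0^{\prime}_{\text{ITRM}}\leq_{\text{ITRM}}x$, then $x\equiv_{\text{ITRM}}0^{\prime}_{\text{ITRM}}$, so $x$ and $0^{\prime}_{\text{ITRM}}$ would compute exactly the same ordinals, forcing $\omega_{\omega}^{\text{CK},x}=\omega_{\omega}^{\text{CK},0^{\prime}}=\omega_{\omega2}^{\text{CK}}$; but $\omega_{\omega}^{\text{CK},x}=\sup_k\omega_{g(k)}^{\text{CK}}=\omega_{\omega}^{\text{CK}}$, a contradiction. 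Also $x$ is not ITRM-computable, since otherwise $\omega_k^{\text{CK},x}=\omega_k^{\text{CK}}$ for all $k$, contradicting $\omega_k^{\text{CK},x}=\omega_{g(k)}^{\text{CK}}>\omega_k^{\text{CK}}$ at the $k$ with $g(k)>k$. Thus $0<_{\text{ITRM}}x<_{\text{ITRM}}0^{\prime}_{\text{ITRM}}$.

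Finally I would prove the strong conclusion that $x$ is not ITRM-reducible to any ITRM-c-random real (which subsumes non-randomness, since $x\leq_{\text{ITRM}}x$ would otherwise witness $x$ reducible to the random real $x$). Suppose $x\leq_{\text{ITRM}}^{n}y$ for some ITRM-c-random $y$. By Corollary \ref{Admpreserv} we have $\omega_j^{\text{CK},y}=\omega_j^{\text{CK}}$ for every $j\in\omega$, so Lemma \ref{boundingreduction} yields $\omega_{g(k)}^{\text{CK}}=\omega_k^{\text{CK},x}\leq\omega_{Ck+n+1}^{\text{CK},y}=\omega_{Ck+n+1}^{\text{CK}}$, i.e. $g(k)\leq Ck+n+1$, for every $k$. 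This contradicts the super-linear growth of $g$, completing the argument.

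The main obstacle is to reconcile two opposing demands on the single function $g$: it must grow fast enough to defeat the \emph{linear} bound of Lemma \ref{boundingreduction} for every fixed register number $n$ (this is what rules out reducibility to random reals, whose admissibles are exactly the boldface ones by Corollary \ref{Admpreserv}), yet it must take finite values cofinal in $\omega$ so that $\omega_{\omega}^{\text{CK},x}$ does not exceed $\omega_{\omega}^{\text{CK}}$ — the latter being precisely what keeps $x$ below $0^{\prime}_{\text{ITRM}}$ and prevents $x$ from computing $0^{\prime}_{\text{ITRM}}$. A quadratic $g$ simultaneously satisfies both, so once Theorem \ref{simpweit} is invoked to realize the prescribed spectrum the remaining verifications are routine.
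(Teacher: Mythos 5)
Your proposal is correct and follows essentially the same route as the paper: invoke Theorem \ref{simpweit} to realize a quadratically growing admissibility spectrum $\omega_k^{\text{CK},x}=\omega_{(k+1)^2}^{\text{CK}}$ (the paper uses $\omega_{i^2}^{\text{CK}}$) with supremum $\omega_\omega^{\text{CK}}$, place $x$ in $L_{\omega_\omega^{\text{CK}}+2}$ to get $x<_{\text{ITRM}}0'_{\text{ITRM}}$, and defeat any reduction to a c-random real by playing the super-linear growth against the linear bound of Lemma \ref{boundingreduction} together with Corollary \ref{Admpreserv}. The only differences are cosmetic (you make the index condition of Theorem \ref{simpweit} and the non-computability of $x$ explicit, which the paper leaves implicit).
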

\begin{proof}
 By Theorem \ref{simpweit}, there is a real $x$ such that $\omega_{i}^{\text{CK},x}=\omega_{i^{2}}^{\text{CK}}$ for all $i\in\omega$, definable over $L_{\omega_{\omega}^{\text{CK}}+1}$ and hence an element
of $L_{\omega_{\omega}^{\text{CK}}+2}$. From $0^{\prime}_{\text{ITRM}}$, we can compute a real coding $L_{\omega_{\omega}^{\text{CK}}+2}$ and therefore also $x$, hence $x\leq_{\text{ITRM}}0^{\prime}_{\text{ITRM}}$.
As $\omega_{\omega}^{\text{CK},x}=\omega_{\omega}^{\text{CK}}<\omega_{\omega}^{\text{CK},0^{\prime}}$ by Theorem \ref{iteratedhp}, $0^{\prime}_{\text{ITRM}}$ is not computable from $x$: for otherwise, a code
for $\omega_{\omega}^{\text{CK}}$ would be computable from $x$.
Now assume that $x$ is reducible to an ITRM-c-random real $y$ via a program $P$ using $n$ registers. By Corollary \ref{Admpreserv}, we have $\omega_{i}^{\text{CK},y}=\omega_{i}^{\text{CK}}$ for 
all $i\in\omega$. By Lemma \ref{boundingreduction}, there is $1<C\in\omega$ such that $\omega_{i}^{\text{CK},x}\leq\omega_{Ci+n+1}^{\text{CK},y}$ for all $i\in\omega$.
Consequently, we have $\omega_{i^{2}}^{\text{CK}}=\omega_{i}^{\text{CK},x}\leq\omega_{Ci+n+1}^{\text{CK},y}=\omega_{Ci+n+1}^{\text{CK}}$ for all $i\in\omega$, which implies $i^{2}\leq Ci+n+1$ for all $i\in\omega$, which
is obviously false (e.g. for $i\geq C+n+1$). So $x$ is not reducible to an ITRM-c-random real.
\end{proof}

\begin{corollary}
 There are $0<_{\text{ITRM}}x,y<0^{\prime}_{\text{ITRM}}$ such that $x\perp_{\text{ITRM}}y$ and neither $x$ nor $y$ is ITRM-c-random.
\end{corollary}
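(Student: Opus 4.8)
The plan is to manufacture both reals by the method already used for the single real of Theorem \ref{nonrandomintermediate}, prescribing their admissible spectra so that neither spectrum can be overtaken by a linear rescaling of the other, and then to read incomparability off the necessary condition for reducibility given by Lemma \ref{boundingreduction}. Concretely, I would keep the real $x$ of Theorem \ref{nonrandomintermediate}, so $\omega_i^{\text{CK},x}=\omega_{i^2}^{\text{CK}}$ for all $i$, and build a companion $y$ with $\omega_i^{\text{CK},y}=\omega_{g(i)}^{\text{CK}}$ for a strictly increasing $g:\omega\to\omega$ chosen below. Exactly as in Theorem \ref{nonrandomintermediate}, the sequence $(\omega_{g(i)}^{\text{CK}}\mid i\in\omega)$ meets the hypotheses of Theorem \ref{simpweit} and, since $\omega_{\omega}^{\text{CK}}$ is an index with $\sup_i\omega_{g(i)}^{\text{CK}}=\omega_{\omega}^{\text{CK}}$ (as $g$ is unbounded), the resulting $y$ lies in $L_{\omega_{\omega}^{\text{CK}}+2}$. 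As in that proof, $y$ is then ITRM-computable from a code for $L_{\omega_{\omega}^{\text{CK}}+2}$ and hence from $0^{\prime}_{\text{ITRM}}$ (cf. Lemma \ref{randomnessandhalting}), so $y\leq_{\text{ITRM}}0^{\prime}_{\text{ITRM}}$; moreover $\omega_{\omega}^{\text{CK},y}=\omega_{\omega}^{\text{CK}}<\omega_{\omega2}^{\text{CK}}=\omega_{\omega}^{\text{CK},0^{\prime}_{\text{ITRM}}}$ by Theorem \ref{iteratedhp}, so a code for a well-ordering of type $\omega_{\omega2}^{\text{CK}}$ witnesses $0^{\prime}_{\text{ITRM}}\not\leq_{\text{ITRM}}y$, giving $y<_{\text{ITRM}}0^{\prime}_{\text{ITRM}}$. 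Finally, $g(i)\neq i$ for some $i$, so $\omega_i^{\text{CK},y}\neq\omega_i^{\text{CK}}$, whence $y$ is not ITRM-c-random by Corollary \ref{Admpreserv}; the same holds for $x$ by Theorem \ref{nonrandomintermediate}.

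The single requirement on $g$ is that it oscillate relative to the quadratic spectrum of $x$: $\liminf_{k}g(k)/k^2=0$ and $\limsup_{k}g(k)/k^2=\infty$, with $g$ strictly increasing and growing by only $1$ per step on arbitrarily long blocks between its upward jumps. Such a $g$ is produced by a routine block construction: on each block one lets $g$ increase by $1$ at every step, keeping it far below $k^2$ (this drives the $\liminf$ to $0$), and at the end of each block one inserts a jump to a value far above $k^2$ (this drives the $\limsup$ to $\infty$); letting the block lengths tend to infinity ensures the slow blocks eventually exceed any fixed modulus. In particular $\limsup_k g(k)/k^2=\infty$ makes $g$ not linearly bounded, so by Lemma \ref{boundingreduction} applied with oracle $\emptyset$ (which would force $g(k)\leq Ck+n+1$ for all $k$ if $y$ were computable by an $n$-register program) $y$ is not ITRM-computable; thus $0<_{\text{ITRM}}y<_{\text{ITRM}}0^{\prime}_{\text{ITRM}}$.

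Incomparability now follows from Lemma \ref{boundingreduction}, with $C$ its universal constant. If $x\leq_{\text{ITRM}}y$ held by a program using $m$ registers, then $\omega_{k^2}^{\text{CK}}=\omega_k^{\text{CK},x}\leq\omega_{Ck+m+1}^{\text{CK},y}=\omega_{g(Ck+m+1)}^{\text{CK}}$, i.e. $k^2\leq g(Ck+m+1)$ for every $k$; but since $\liminf_j g(j)/j^2=0$ and the small-ratio indices $j$ fall in long slow blocks meeting every residue class modulo $C$, one finds arbitrarily large $j$ of the form $Ck+m+1$ with $g(j)<k^2$, a contradiction. Symmetrically, if $y\leq_{\text{ITRM}}x$ held by a program using $m$ registers, then $g(k)\leq(Ck+m+1)^2\leq(C+m+1)^2k^2$ for every $k\geq1$, contradicting $\limsup_k g(k)/k^2=\infty$. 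Hence $x\perp_{\text{ITRM}}y$, and $x,y$ are as required.

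The step I expect to be the main obstacle is checking that the oscillating sequence $(\omega_{g(i)}^{\text{CK}})$ satisfies the relative-admissibility and definability-over-$L_{\omega_{\omega}^{\text{CK}}}$ hypotheses of Theorem \ref{simpweit} uniformly, together with the residue bookkeeping needed to refute $x\leq_{\text{ITRM}}y$ (the refutation of $y\leq_{\text{ITRM}}x$ uses only the $\limsup$ and is immediate). Both points are, however, entirely parallel to the verifications already carried out for the real $x$ in Theorem \ref{nonrandomintermediate}, so no essentially new difficulty arises.
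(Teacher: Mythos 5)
Your proposal is correct and follows essentially the same route as the paper: prescribe admissible spectra via Theorem \ref{simpweit} and refute reductions in both directions via the linear bound of Lemma \ref{boundingreduction}; the only difference is that the paper builds two fresh reals with symmetrically mutually-oscillating spectra ($a_k>b_{Ck}$ and $b_l>a_{Cl}$ infinitely often), whereas you reuse the quadratic-spectrum real of Theorem \ref{nonrandomintermediate} and oscillate one new spectrum around it. One small slip: to witness $0^{\prime}_{\text{ITRM}}\not\leq_{\text{ITRM}}y$ you should use a code for an ordinal in $[\omega_{\omega}^{\text{CK}},\omega_{\omega2}^{\text{CK}})$ (computable from $0^{\prime}_{\text{ITRM}}$ but not from $y$), not one of type $\omega_{\omega2}^{\text{CK}}$, which is not computable from $0^{\prime}_{\text{ITRM}}$ either.
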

\begin{proof}
Let $(a_{n}|n\in\omega)$ and $(b_{n}|n\in\omega)$ be two strictly increasing sequences of natural numbers such that, for every $C\in\omega$, there are $k,l\in\omega$ with
$a_{k}>b_{Ck}$ and $b_{l}>a_{Cl}$. For example, we may take $a_{1}=b_{1}=1$, and then recursively $a_{(2n+1)!}=b_{(2n+1)!}+(n-1)(2n+1)!+1$,
$b_{(2n)!}=a_{(2n)!}+(n-1)(2n)!+1$ and $a_{k+1}=a_{k}+1$, $b_{k+1}=b_{k}+1$ if $k+1$ is not of the form $(2n+1)!$ or $(2n)!$, respectively.
Now, by Theorem \ref{simpweit}, we find reals $x,y$ so that $\omega_{i}^{\text{CK},x}=a_{i}$ and $\omega_{i}^{\text{CK},y}=b_{i}$ for all $i\in\omega$. $x,y$ are ITRM-computable
from $0^{\prime}_{\text{ITRM}}$ and not ITRM-c-random by the same argument as in Theorem \ref{nonrandomintermediate}. To see that $x\perp_{\text{ITRM}}y$, assume for a contradiction that
one is reducible to the other, without loss of generality $x\leq^{n}_{\text{ITRM}}y$. By Lemma \ref{boundingreduction}, there is hence $C\in\omega$ such that $\omega_{i}^{\text{CK},x}\leq\omega_{Ci+n+1}^{\text{CK},y}$ for all $i\in\omega$.
Pick $0<k\in\omega$ such that $a_{k}>b_{(C+n+1)k}$. Then $\omega_{(C+n+1)k}^{\text{CK},y}=\omega_{b_{(C+n+1)k}}^{\text{CK}}<\omega_{a_{k}}^{\text{CK}}=\omega_{k}^{\text{CK},x}\leq\omega_{Ck+n+1}^{\text{CK},y}$; consequently,
we have $(C+n+1)k<Ck+n+1$, which is impossible for $k>0$.
\end{proof}

\noindent
\textbf{Remark}: The same strategy allows the construction of arbitrarily long finite and in fact a countable set of pairwise incomparable, reals $<_{\text{ITRM}}0^{\prime}_{\text{ITRM}}$ which are
neither ITRM-c-random nor even ITRM-reducible to a ITRM-c-random real.


\section{Autoreducibility for Infinite Time Register Machines}

Intuitively, a real $x$ is called autoreducible if each of its bits can be effectively recovered from its position in $x$ and all other bits of $x$. A discussion of the classical notion of autoreducibility can, for example, be found in \cite{DoHi}.
We want to consider how this concept behaves in the context of infinitary machine models of computation.
For the time being, we focus on ITRMs - but the notion of course
makes sense for other types like the Infinite Time Turing Machines (ITTMs, see \cite{HaLe}), Ordinal Turing machines and Ordinal Register Machines (OTMs and ORMs, see e.g. \cite{Ko2}) etc. as well.

\begin{defini}
For $x\in ^{\omega}2$, we define $x_{\setminus n}$ as $x$ with its $n$th bit deleted (i.e. the bits up to $n$ are the same, all further bits are shifted one place to the left).
We say that $x$ is ITRM-autoreducible if and only if there is an ITRM-program $P$ such that $P^{x_{\setminus n}}(n)\downarrow=x(n)$ for all $n\in\omega$.
$x$ is called totally incompressible 
if and only if it is not ITRM-autoreducible, i.e. there is no ITRM-program $P$ such that $P^{x_{\setminus n}}(n)\downarrow=x(n)$ for all $n\in\omega$. If there is such a program,
then we say that $P$ autoreduces $x$, $P$ is an autoreduction for $x$ or that $x$ is autoreducible via $P$.
\end{defini}

\begin{corollary}
 No totally incompressible $x$ is ITRM-computable or even recognizable. $0^{\prime}_{\text{ITRM}}$, the real coding the halting problem for ITRMs, is ITRM-autoreducible.
\end{corollary}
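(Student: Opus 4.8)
The plan is to prove two implications — computable $\Rightarrow$ autoreducible and recognizable $\Rightarrow$ autoreducible — and then deduce the statement about $0'_{\text{ITRM}}$ from the second together with a cited recognizability fact; the two negative assertions follow by contraposition, since ``totally incompressible'' means ``not autoreducible''. First I would dispose of the computable case, which is immediate: if $x$ is ITRM-computable, say $Q\downarrow=x$, then the program which on input $n$ ignores its oracle $x_{\setminus n}$ and simply runs $Q$ to output $Q(n)=x(n)$ is an autoreduction of $x$. Hence every ITRM-computable real is autoreducible, so no totally incompressible real is ITRM-computable.

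The substantial point is that recognizability already forces autoreducibility. Suppose $x\in$ RECOG and let $R$ be a program with $R^{z}\downarrow=1$ if $z=x$ and $R^{z}\downarrow=0$ otherwise; note that, by Definition \ref{deci}, $R$ halts on \emph{every} oracle. Given the oracle $x_{\setminus n}$ together with the input $n$, I would reconstruct the two candidate reals $c_{0},c_{1}$ obtained by inserting the bit $0$, resp. $1$, at position $n$: concretely $c_{b}(j)=x_{\setminus n}(j)$ for $j<n$, $c_{b}(n)=b$, and $c_{b}(j)=x_{\setminus n}(j-1)$ for $j>n$. Since $x_{\setminus n}$ agrees with $x$ everywhere except at the deleted coordinate, we have $x=c_{x(n)}$, while $c_{1-x(n)}$ differs from $x$ exactly at position $n$ and is therefore a distinct real. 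The autoreduction then simulates $R$ on the oracle $c_{0}$ — answering each oracle query $m$ posed by $R$ with $x_{\setminus n}(m)$ if $m<n$, with $0$ if $m=n$, and with $x_{\setminus n}(m-1)$ if $m>n$ — and outputs $0$ when this simulation returns $1$ (so that $c_{0}=x$, i.e. $x(n)=0$) and outputs $1$ otherwise (so that $x=c_{1}$, i.e. $x(n)=1$). This works uniformly for all $n$ because $n$ is supplied as the input, and it halts because $R$ is total. Thus $x$ is autoreducible, and no totally incompressible real is recognizable.

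Finally, for the halting problem I would simply invoke that $0'_{\text{ITRM}}=\{i\mid P_{i}\downarrow\}$ is ITRM-recognizable (Theorem $4.6$ of \cite{Ca2}, as already used in the proof of Lemma \ref{randomnessandhalting}). By the previous paragraph, recognizability yields autoreducibility, so $0'_{\text{ITRM}}$ is ITRM-autoreducible. The one step that genuinely needs care is the rerouting of $R$'s oracle queries through the computable index shift that realizes $c_{0}$ as a virtual oracle over $x_{\setminus n}$; I expect this to be the main (though routine) obstacle, as it is a standard relativization for ITRMs, with everything else being immediate.
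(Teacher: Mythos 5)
Your proof is correct, and for the first two assertions it is essentially the paper's argument: the computing program is itself an autoreduction, and a recognizer lets you test the two candidate reals obtained by inserting $0$ or $1$ at the deleted position. The only divergence is the last claim: the paper's proof of the corollary establishes the autoreducibility of $0^{\prime}_{\text{ITRM}}$ directly by a padding argument (given an index $j$, compute an index $i\neq j$ of a program with identical behaviour, so the $j$th bit can be read off from the $i$th bit of the remaining oracle), whereas you deduce it from the recognizability of $0^{\prime}_{\text{ITRM}}$ via the second assertion. This is exactly the alternative derivation the paper itself records in the remark immediately following the corollary, so nothing is lost; your route leans on the external recognizability result from \cite{Ca2}, while the paper's padding argument is self-contained and does not need that citation.
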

\begin{proof}
 Clearly, if $P$ computes $x$, then $P$ is also an autoreduction  for $x$. If $x$ is recognizable and $P$ recognizes $x$, we can easily retrieve a 
deleted bit by plugging in $0$ and $1$ and letting $P$ run on both results to see for which one $P$ stops with output $1$. (The same idea works for
finite subsets instead of single bits.) For $0^{\prime}_{\text{ITRM}}$, if a program index $j$ is given, it is easy to determine
some index $i\neq j$ corresponding to a program that works in exactly the same way (by e.g. adding a meaningless line somewhere), so that the remaining
bits allow us to reconstruct the $j$th bit.
\end{proof}

\noindent
\textbf{Remark}: The autoreducibility of $0^{\prime}_{\text{ITRM}}$ also follows from the first part of the corollary and the recognizability of $0^{\prime}_{\text{ITRM}}$ (see \cite{Ca2}).

\begin{defini}
 Let $x\in^{\omega}2$, $i\in\omega$. Then $\text{flip}(x,i)$ denotes the real obtained from $x$ by just changing the $i$th bit, i.e. $x\Delta\{i\}$.
\end{defini}

In the classical setting, no random real is autoreducible. This is still true for ITRM- and ITRM-c-randomness:

\begin{thm}{\label{randomnessimpliestotalincompressibility}}
 If $x$ is ITRM-random or ITRM-c-random, then $x$ is totally incompressible. 
\end{thm}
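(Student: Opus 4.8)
The plan is to assume, for a contradiction, that $x$ is autoreducible via some ITRM-program $P$, and then to exhibit a single ITRM-decidable set $C_P$ with $x\in C_P$ which is simultaneously meager and Lebesgue null; membership of $x$ in such a set then contradicts $x$ being ITRM-c-random (in the meager case) as well as ITRM-random (in the null case), so one argument disposes of both hypotheses at once. Concretely I would set
\[
C_P := \{z\subseteq\omega \mid \forall n\in\omega\; P^{z_{\setminus n}}(n)\downarrow = z(n)\},
\]
so that the autoreduction hypothesis gives $x\in C_P$ immediately.

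First I would verify that $C_P$ is ITRM-decidable. Given an oracle $z$, for each fixed $n$ the real $z_{\setminus n}$ is uniformly computable from $z$ and $n$, and by the uniform halting-problem solver of Theorem \ref{hp} one can decide whether $P^{z_{\setminus n}}(n)\downarrow$ and, if so, read off its output and compare it with $z(n)$. This yields a uniformly decidable predicate $d(n)$, and $z\in C_P$ iff $d(n)=1$ for all $n\in\omega$; running through all $n$ and comparing is decidable exactly as the set $A_z$ was in the proof of Theorem \ref{mutuallyrandom}. As in the proof of Theorem \ref{Dir1}, $C_P$ is moreover provably $\Delta^1_2$ and hence has the Baire property and is Lebesgue measurable.

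The heart of the argument is a single involution. For $n\in\omega$, the map $z\mapsto\text{flip}(z,n)$ satisfies $\text{flip}(z,n)_{\setminus n}=z_{\setminus n}$, because deleting the $n$th bit erases precisely the bit that was flipped. Hence $P^{\text{flip}(z,n)_{\setminus n}}(n)=P^{z_{\setminus n}}(n)$, while $\text{flip}(z,n)(n)=1-z(n)$; so if $z\in C_P$ then the defining clause at $n$ fails for $\text{flip}(z,n)$, giving $\text{flip}(z,n)\notin C_P$. Now fix any basic cylinder $[\sigma]$ and put $n=|\sigma|$. Then $z\mapsto\text{flip}(z,n)$ is a measure-preserving homeomorphism of $[\sigma]$ onto itself carrying $C_P\cap[\sigma]$ into $[\sigma]\setminus C_P$. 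It follows that $\mu(C_P\cap[\sigma])\le\tfrac12\mu([\sigma])$ and that $C_P$ is not comeager in $[\sigma]$. Since this holds for every cylinder, the Lebesgue density theorem forces $\mu(C_P)=0$, and a set with the Baire property that is comeager in no cylinder is meager.

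Combining, $C_P$ is an ITRM-decidable set which is both null and meager and which contains $x$, contradicting both forms of randomness; hence $x$ is totally incompressible. The steps requiring the most care are the decidability of the $\forall n$ clause and the verification that $C_P$ has the Baire property and is Lebesgue measurable, since these are what license the density and category dichotomies used at the end; by contrast, the involution identity $\text{flip}(z,n)_{\setminus n}=z_{\setminus n}$ is the conceptual crux and, pleasingly, settles measure and category in one stroke.
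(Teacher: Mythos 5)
Your proposal is correct and follows essentially the same route as the paper: the set of reals autoreducible via $P$ is decidable by the uniform halting-problem solver, is provably $\Delta^1_2$ (hence measurable with the Baire property), and the observation that flipping one bit destroys $P$-autoreducibility yields both the null and meager conclusions. Your single flip involution on each cylinder $[\sigma]$ at position $n=|\sigma|$ is just a tidier packaging of the paper's step of splitting $X\cap I$ by the $(k+1)$th digit and translating each half.
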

\begin{proof}
 Assume that $x$ is autoreducible via $P$. We show that $x$ is not ITRM-random. Let $X$ be the set of all $y$ which are autoreducible via $P$.
Obviously, we have $x\in X$. 
$X$ is certainly decidable: Given $y$, use a halting problem solver for $P$ to see whether $P^{y_{\setminus n}}(n)\downarrow$ for all $n\in\omega$.
If not, then $y\notin X$. Otherwise, carry out these $\omega$ many computations and check the results one after the other.

Since $X$ is ITRM-decidable, it is provably $\Delta_{2}^{1}$, which implies that $X$ 
is measurable. 

We show that $X$ must be of measure $0$. To see this, assume for a contradiction that $\mu(X)>0$.
Note first, that, whenever $y$ is $P$-autoreducible and $z$ is a real that deviates from $y$ in exactly one digit (say, the $i$th bit), then $z$ is not $P$-autoreducible
(since $P$ will compute the $i$th bit wrongly).

By the Lebesgue density theorem, there is an open basic interval $I$ (i.e. consisting of all reals that start with a certain finite binary string $s$ length $k\in\omega$) such that
the relative measure of $X$ in $I$ is $>\frac{1}{2}$. Let $X^{\prime}=X\cap I$, and let $X^{\prime}_0$ and $X^{\prime}_1$ be the subsets of $X^{\prime}$ consisting 
of those elements that have their $(k+1)$th digit equal to $0$ or $1$, respectively. Clearly, $X^{\prime}_{0}$ and $X^{\prime}_{1}$ are measurable, $X^{\prime}_{0}\cap X^{\prime}_{1}=\emptyset$
and $X^{\prime}=X^{\prime}_{0}\cup X^{\prime}_{1}$. Now define $\bar{X}^{\prime}_{0}$ and $\bar{X}^{\prime}_{1}$ by changing the $(k+1)$th bit of all elements of $X^{\prime}_{0}$ and $X^{\prime}_{1}$,
respectively. Then all elements of $\bar{X}^{\prime}_{0}$ and $\bar{X}^{\prime}_{1}$ are elements of $I$ (as we have not changed the first $k$ bits), none of them is $P$-autoreducible (since they all
deviate from $P$-autoreducible elements by exactly one bit, namely the $k$th), $\bar{X}^{\prime}_{0}\cap\bar{X}^{\prime}_{1}=\emptyset$ (elements of the former set have $1$ as their $(k+1)$th digit,
for elements of $\bar{X}^{\prime}_{1}$ it is $0$) and $\mu(\bar{X}^{\prime}_{0})=\mu(X^{\prime}_{0})$, $\mu(\bar{X}^{\prime}_{1})=\mu(X^{\prime}_{1})$ (as the $\bar{X}^{\prime}_{i}$ are just 
translations of the $X^{\prime}_{i}$). As no element of the $\bar{X}^{\prime}_{i}$ is $P$-autoreducible, we have $(\bar{X}^{\prime}_{0}\cup\bar{X}^{\prime}_{1})\cap X^{\prime}=\emptyset$. Let
$\bar{X}^{\prime}:=\bar{X}^{\prime}_{0}\cup\bar{X}^{\prime}_{1}$.

Then we have
 $\mu_{I}(\bar{X}^{\prime})=\mu_{I}(\bar{X}^{\prime}_{0}\cup\bar{X}^{\prime}_{1})=\mu_{I}(\bar{X}^{\prime}_{0})+\mu_{I}(\bar{X}^{\prime}_{1})=\mu_{I}(X^{\prime}_{0})+\mu_{I}(X^{\prime}_{1})=\mu_{I}(X^{\prime})>\frac{1}{2}$ (where
$\mu_{I}$ denotes the relative measure for $I$). So $X^{\prime}$ and $\bar{X}^{\prime}$ are two disjoint subsets of $I$ both with relative measure $>\frac{1}{2}$, a contradiction.

For the analogous statement for ITRM-c-randomness, we proceed similarly, taking $I$ to be an interval in which $X\cap I$ is comeager instead. That such an $I$ exists can be seen as follows:
Suppose that $X$ is not meager. As above, $X$ is ITRM-decidable, hence provably $\Delta_{2}^{1}$ and therefore has the Baire property. Then,
there is an open set $U$
such that $(X\setminus U)\cup (U\setminus X)$ is meager. In particular, $U$ is not empty. Hence $X$ is comeager in $U$. As $U$ is open, there is a nonempty open interval $I\subseteq U$.
It is now obvious that $X\cap I$ is comeager in $I$, so $I$ is as desired. We then use the same argument as above, noting that two comeager subsets of $I$ cannot be disjoint.
\end{proof}

\begin{corollary}{\label{Cohengenericsareincompressible}}
 Let $x$ be Cohen-generic over $L_{\omega_{\omega}^{\text{CK}}}$. Then $x$ is totally incompressible.
\end{corollary}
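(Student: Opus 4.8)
The plan is to reduce the corollary directly to Theorem \ref{randomnessimpliestotalincompressibility}, which already establishes that any ITRM-c-random real is totally incompressible. Thus it suffices to show that every real $x$ that is Cohen-generic over $L_{\omega_{\omega}^{\text{CK}}}$ is in fact ITRM-c-random. This is precisely the content of Corollary \ref{randomandcohengeneric} in the unrelativized case: taking $y=\emptyset$ there, we have $\omega_{\omega}^{\text{CK},y}=\omega_{\omega}^{\text{CK}}$ and $L_{\omega_{\omega}^{\text{CK}}}[\emptyset]=L_{\omega_{\omega}^{\text{CK}}}$, so the statement ``$x$ is ITRM-c-random relative to $\emptyset$'' (i.e. ITRM-c-random) is equivalent to ``$x$ is Cohen-generic over $L_{\omega_{\omega}^{\text{CK}}}$''.

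Concretely, I would first invoke Corollary \ref{randomandcohengeneric} to conclude that $x$ being Cohen-generic over $L_{\omega_{\omega}^{\text{CK}}}$ implies $x$ is ITRM-c-random. Then I would apply Theorem \ref{randomnessimpliestotalincompressibility}, whose hypothesis is satisfied, to conclude that $x$ is totally incompressible. The entire argument is therefore a two-line chaining of previously established results.

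I do not anticipate any genuine obstacle here, since both ingredients are already fully proved in the excerpt. The only point requiring a moment of care is to confirm that the unrelativized instance of Corollary \ref{randomandcohengeneric} says exactly what is needed, namely that genericity over $L_{\omega_{\omega}^{\text{CK}}}$ is equivalent to (not merely implied by) ITRM-c-randomness; but since we only need the implication from genericity to randomness, even the easier direction of that corollary suffices. One could alternatively remark that this corollary is an immediate consequence of the main theorem stated just before it, and so requires no independent verification.

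\begin{proof}
Let $x$ be Cohen-generic over $L_{\omega_{\omega}^{\text{CK}}}$. Applying Corollary \ref{randomandcohengeneric} with oracle $y=\emptyset$ (so that $\omega_{\omega}^{\text{CK},\emptyset}=\omega_{\omega}^{\text{CK}}$ and $L_{\omega_{\omega}^{\text{CK}}}[\emptyset]=L_{\omega_{\omega}^{\text{CK}}}$), we conclude that $x$ is ITRM-c-random. By Theorem \ref{randomnessimpliestotalincompressibility}, $x$ is therefore totally incompressible.
\end{proof}
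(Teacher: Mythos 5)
Your proof is correct and matches the paper's own argument, which likewise derives the corollary immediately from Corollary \ref{randomandcohengeneric} (with trivial oracle) combined with Theorem \ref{randomnessimpliestotalincompressibility}. No issues.
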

\begin{proof}
This follows immediately from Theorem \ref{randomnessimpliestotalincompressibility} and Corollary \ref{randomandcohengeneric}.
\end{proof}

\noindent
\textbf{Remark}: When we demand that every finite subset of the bits of $x$ (instead of every single bit) can be effectively obtained from the remaining bits and the positions of the missing bits by some ITRM-program, 
we get the notion of \textbf{strong} autoreducibility. Strong autoreducibility is a strictly stronger notion than autoreducibility: If $x$
is Cohen-generic over $L_{\omega_{\omega}^{\text{CK}}}$, then $y:=x\oplus x$ is clearly autoreducible; however, by Corollary \ref{Cohengenericsareincompressible}, $y$ is not strongly autoreducible, as a procedure for
obtaining even just the $2n$th and $2n+1$th bit of $y$ from the remaining bits for every $n$ would lead to an autoreduction for $x$. 

\begin{defini}
 Denote by IC$_{\text{ITRM}}$ and RA$_{\text{ITRM}}$ the set of totally incompressible and ITRM-random reals, respectively.
\end{defini}

\begin{corollary}{\label{totallycompressiblesarerare}}
The set of ITRM-autoreducible reals has measure $0$ and is meager.
\end{corollary}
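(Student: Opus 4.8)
The plan is to deduce this immediately from Theorem \ref{randomnessimpliestotalincompressibility} together with the fact that there are only countably many ITRM-programs. Theorem \ref{randomnessimpliestotalincompressibility} says that every ITRM-random real and every ITRM-c-random real is totally incompressible, i.e. \emph{not} autoreducible. Contrapositively, if $A$ denotes the set of ITRM-autoreducible reals, then every element of $A$ fails to be ITRM-random and also fails to be ITRM-c-random. Hence $A$ is contained both in the set of non-ITRM-random reals and in the set of non-ITRM-c-random reals, and it suffices to show that the former is null and the latter is meager.

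For the measure claim, I would unwind Definition \ref{randomdef}: a real is non-ITRM-random exactly when it lies in some ITRM-decidable set of Lebesgue measure $0$. Thus the totality of non-ITRM-random reals is the union of all ITRM-decidable null sets. Since there are only countably many ITRM-programs, there are only countably many ITRM-decidable sets, so this is a \emph{countable} union of null sets and therefore null. As $A$ is a subset of this set, $\mu(A)=0$.

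For meagerness the argument is exactly parallel: the non-ITRM-c-random reals form the union $U$ of all ITRM-decidable meager sets, and—just as observed in the proof of Theorem \ref{nouniversaltest}—$U$ is a countable union of meager sets and hence meager. Since $A\subseteq U$, the set $A$ is meager. (Alternatively, and more self-contained, one can avoid reference to the random-real notions altogether: for each ITRM-program $P$ let $X_{P}$ be the set of reals autoreducible via $P$, so that $A=\bigcup_{P}X_{P}$; the proof of Theorem \ref{randomnessimpliestotalincompressibility} already establishes that each $X_{P}$ is both null and meager, and a countable union of such sets is again null and meager.)

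I do not expect a genuine obstacle here. The only point that needs to be stated explicitly is that the relevant unions are \emph{countable}—this is what lets us invoke the $\sigma$-ideal closure of the null and meager ideals—and this is immediate from the countability of the set of ITRM-programs. Everything else is a direct appeal to the already-proved Theorem \ref{randomnessimpliestotalincompressibility}.
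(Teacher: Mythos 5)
Your proof is correct. For the measure half it is essentially the paper's own argument: the paper notes that the proof of Theorem \ref{randomnessimpliestotalincompressibility} already shows that the set of reals autoreducible via a fixed program $P$ is null, and then takes the union over the countably many programs --- exactly your parenthetical alternative, and your primary route (autoreducible $\Rightarrow$ non-ITRM-random $\Rightarrow$ member of the countable union of all ITRM-decidable null sets) is only a trivial repackaging of this. For the meager half you diverge slightly: you observe that the autoreducible reals sit inside the union of all ITRM-decidable meager sets, which is a countable union of meager sets and hence meager (the same observation used in Theorem \ref{nouniversaltest}), whereas the paper instead invokes Corollary \ref{Cohengenericsareincompressible}, i.e.\ that the comeager set of Cohen-generics over $L_{\omega_{\omega}^{\text{CK}}}$ consists of totally incompressible reals, so the autoreducible reals lie in its (meager) complement. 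Both routes are sound and rest on the same underlying facts; yours has the mild advantage of treating the null and meager cases in a uniform way, while the paper's is a one-line citation of an already-stated corollary.
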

\begin{proof}
 The proof of Theorem \ref{randomnessimpliestotalincompressibility} shows that, for any ITRM-program $P$, the set of reals autoreducible via $P$ has measure $0$. As there are only countable
many programs, the result follows. Meagerness follows from Corollary \ref{Cohengenericsareincompressible}.
\end{proof}

By Theorem \ref{randomnessimpliestotalincompressibility}, we have RA$_{\text{ITRM}}\subseteq $IC$_{\text{ITRM}}$. By now, we have observed various similarities between totally incompressibility
and randomness. However, the converse of Theorem \ref{randomnessimpliestotalincompressibility} fails for ITRM-randomness:

\begin{prop}{\label{totalincompressibilitydoesnotimplyrandomness}}
 IC$_{\text{ITRM}}\not\subseteq$ RA$_{\text{ITRM}}$, i.e. there is a real $x$ such that $x$ is totally incompressible, but not ITRM-random.
\end{prop}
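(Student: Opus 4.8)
The plan is to exploit Theorem \ref{randomnessimpliestotalincompressibility}, which guarantees that every ITRM-c-random real is already totally incompressible. Since ITRM-c-randomness is a category notion while ITRM-randomness is a measure notion, the two are a priori unrelated, and it suffices to produce a single real that is ITRM-c-random but \emph{not} ITRM-random: such a real automatically lies in $\text{IC}_{\text{ITRM}}\setminus\text{RA}_{\text{ITRM}}$. By Definition \ref{randomdef}, a real fails to be ITRM-random exactly when it belongs to some ITRM-decidable set of Lebesgue measure $0$. Hence the entire problem reduces to exhibiting one ITRM-decidable set $N$ that is simultaneously Lebesgue-null and comeager, together with the (already available) existence of an ITRM-c-random real.

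For this I would fix the set $N:=\{x\in{}^{\omega}2 : \lim_{n\to\infty}\tfrac1n\sum_{i<n}x(i)\text{ does not exist}\}$ of reals whose sequence of Cesàro bit-averages diverges. By the strong law of large numbers, measure-one many reals have these averages converging to $\tfrac12$, so $N$ is Lebesgue-null. On the other hand, a routine genericity argument shows $N$ is comeager: given any finite binary condition, one may extend it by a long block of $1$'s and then by a long block of $0$'s, so that comeagerly many reals have $\liminf$ of their averages equal to $0$ and $\limsup$ equal to $1$, whence they diverge. Finally, membership in $N$ is an arithmetical (in fact $\Sigma^0_3$) property of the oracle, since the rational $a_n=\tfrac1n\sum_{i<n}x(i)$ is uniformly computable from $x$ and divergence is the failure of the Cauchy condition, a statement with finitely many number quantifiers over $\omega$. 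As every arithmetical set is in particular $\Pi^1_1$, and all $\Pi^1_1$-sets are ITRM-decidable (see \cite{ITRM} and the introduction), $N$ is ITRM-decidable; alternatively one evaluates the Cauchy condition directly via the appropriate nested $\omega$-searches in transfinite time.

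With $N$ in hand, the pieces assemble quickly. Since $N$ is ITRM-decidable, so is its complement $\mathfrak{P}(\omega)\setminus N$ (simply swap the two outputs of a deciding program), and this complement is meager because $N$ is comeager. Now pick any ITRM-c-random real $x$, for instance a real Cohen-generic over $L_{\omega_{\omega}^{\text{CK}}}$, which is ITRM-c-random by Corollary \ref{randomandcohengeneric}. As $x$ avoids every ITRM-decidable meager set, it avoids $\mathfrak{P}(\omega)\setminus N$, so $x\in N$. Thus $x$ lies in the ITRM-decidable null set $N$ and is therefore not ITRM-random, while, being ITRM-c-random, it is totally incompressible by Theorem \ref{randomnessimpliestotalincompressibility}. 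Hence $x\in\text{IC}_{\text{ITRM}}\setminus\text{RA}_{\text{ITRM}}$, as desired.

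The step I expect to require the most care is the ITRM-decidability of $N$: one must confirm that the relevant convergence statement really falls inside the decidable class and can be evaluated uniformly in the oracle. The remaining ingredients, namely the nullity and comeagerness of $N$ and the closure of the ITRM-decidable sets under complementation, are classical or immediate.
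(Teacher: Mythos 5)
Your proof is correct, but it takes a genuinely different route from the paper's. The paper uses a padding trick: it fixes an ITRM-c-random $y$, sets $x:=y\oplus 0$, observes that $x$ lies in the ITRM-decidable null (and meager) set $[0,1]\oplus 0$ and hence is not ITRM-random, and then shows directly that any autoreduction for $x$ would yield an autoreduction for $y$, contradicting Theorem \ref{randomnessimpliestotalincompressibility}. You instead prove the stronger structural fact that \emph{no} ITRM-c-random real is ITRM-random, by exhibiting an ITRM-decidable set (divergent Ces\`aro averages) that is simultaneously Lebesgue-null and comeager; any ITRM-c-random real must then land in this null set, and total incompressibility comes for free from Theorem \ref{randomnessimpliestotalincompressibility}. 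Each step of yours checks out: the set $N$ is $\Sigma^0_3$ in the oracle, hence $\Delta^1_1\subseteq\Pi^1_1$ and therefore ITRM-decidable by the fact quoted in the introduction (or by direct nested $\omega$-searches), its nullity is the strong law of large numbers, and its comeagerness is the standard block argument. What the paper's argument buys is self-containedness within the section (it needs nothing beyond $[0,1]\oplus 0$ being decidable, null and meager, plus the easy reduction of autoreductions); what yours buys is the sharper conclusion that the category-random and measure-random reals are disjoint classes, so the witness can be taken to be ITRM-c-random itself rather than a padded real that is random in neither sense. One small presentational point: you should make explicit that Theorem \ref{randomnessimpliestotalincompressibility} covers the ITRM-c-random case (it does, by its statement), since that is the half of it you actually invoke.
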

\begin{proof}
Let $y$ be ITRM-c-random and let $x=y\oplus 0$. Then $x$ is not ITRM-c-random since $[0,1]\oplus 0$ is ITRM-decidable and meager. 
Moreover, $x$ is totally incompressible: For an autoreduction $P$ for $x$ would immediately lead to an autoreduction for $y$: To determine the $n$th bit of $y$ given $y_{\setminus{n}}$,
we only need to run $P^{(y\oplus 0)_{\setminus{2n}}}$, where $(y\oplus 0)_{\setminus{2n}}$ is just $y_{\setminus{n}}\oplus 0$ with an extra $0$ inserted at the $2n$th place.
However, by Theorem \ref{randomnessimpliestotalincompressibility}, $y$ is totally incompressible.
\end{proof}

\section{Conclusion and further work}
The most pressing issue is certainly to strengthen the parallelism between ITRM-randomness and ML-randomness by studying the corresponding notion for sets of Lebesgue measure $0$ rather than meager sets. Moreover, in focusing on
decidable sets as the relevant tests for ITRM-c-randomness, we deviate from the spirit of ML-randomness. An ITRM-based notion closer to the idea of ML-randomness would be `ITRM-ML-randomness': A set $X$ is an ITRM-ML-test
if and only if there is an ITRM-program $P$ such that (1) $P(i)$ computes (in an appropriate coding) a set $U_{i}$ of rational intervals with $\mu(\bigcup{U_{i}})\leq 2^{-i}$ for all $i\in\omega$ and (2) $X=\bigcap_{i\in\omega}\bigcup U_{i}$.
A real $x$ is then called ITRM-ML-random if it is not an element of any ITRM-ML-test. Note that, by solvability of the bounded halting problem for ITRMs, every ITRM-ML-test is ITRM-decidable, so that ITRM-ML-randomness is a weaker notion than
ITRM-randomness. We presently do not know, however, whether it is strictly weaker.

Still, ITRM-c-randomness shows an interesting behaviour, partly analogous to ML-randomness, though by quite different arguments. Similar approaches are likely to work for other machine models of generalized computations,
in particular ITTMs (\cite{HaLe}) (which were shown in \cite{CaSc} to obey the analogue of the non-meager part of Theorem \ref{ManyOracles}) and ordinal Turing Machines (\cite{Ko}) (for which the analogues of both parts of Theorem 
\ref{ManyOracles} turned out to be independent of ZFC) which we study in ongoing work. This further points towards a more general background theory of computation that allow unified arguments 
for all these various models as well as classical computability.
Furthermore, we want to see whether the remarkable conceptual stability of ML-randomness 
(for example the equivalence with Chaitin randomness or unpredictabiliy in the sense of r.e. Martingales, see e.g. sections $6.1$ and $6.3$ of \cite{DoHi})
carries over to the new context.

\section{Acknowledgements}

I am indebted to Philipp Schlicht for several helpful discussions of the results and proofs and suggesting various crucial references. I also thank the referees of \cite{CiE} for suggesting several simplifications and clarifications.
Finally, I thank the anonymous referee of this paper for various improvements of the presentation.

\end{document}